\newcommand\al{\alpha}
\newcommand\be{\beta}
\newcommand\ga{\gamma}
\newcommand\Ga{\Gamma}
\newcommand\de{\delta}
\newcommand\om{\omega}
\newcommand\Om{\Omega}
\newcommand{\eps}{\varepsilon}
\newcommand\si{\sigma}
\newcommand\C{\mathbb C}
\newcommand\Z{\mathbb Z}
\renewcommand\aa{\mathfrak a}
\newcommand\bb{{\mathfrak b}}
\newcommand\g{\mathfrak g}
\newcommand\gl{\mathfrak{gl}}
\newcommand\gr{\operatorname{gr}}
\newcommand\diag{\operatorname{diag}}
\newcommand\Sym{\operatorname{Sym}}
\newcommand\tr{\operatorname{tr}}
\newcommand\End{\operatorname{End}}
\newcommand\W{\operatorname{W}}
\newcommand\CW{\widehat{\operatorname W}}
\newcommand\ww{{\widehat w}}
\newcommand\Mat{\operatorname{Mat}}
\newcommand\Q{\overline Q}
\newcommand\Rep{\operatorname{Rep}}
\newcommand\Shift{\operatorname{Shift}}
\newcommand\Free{\operatorname{Free}}
\newcommand\Al{\operatorname{R}}
\newcommand\odd{{\operatorname{Odd}(w,\wt w)}}
\newcommand\even{{\operatorname{Even}(w,\wt w)}}
\newcommand\wt{\widetilde}
\newcommand\wh{\widehat}
\newcommand\ccdot{\,\cdot\,}
\newcommand\dd{{d,\LL}}
\newcommand\oL{{\oplus L}}
\newcommand\w{\al}
\newcommand\Lie{\mathcal N}
\newcommand\FF{\mathcal F}
\newcommand\LL{L}
\newcommand\T{T}
\newcommand\G{\mathcal G}
\newcommand\Aa{\mathcal A}
\newcommand\Pp{\mathcal P}
\newcommand\YY{\mathcal Y}
\newcommand\bl{\{\!\!\{}
\newcommand\br{\}\!\!\}}
\newtheorem{theorem}{Theorem}[section]
\newtheorem{proposition}[theorem]{Proposition}
\newtheorem{lemma}[theorem]{Lemma}
\newtheorem{corollary}[theorem]{Corollary}
\theoremstyle{definition}
\newtheorem{definition}[theorem]{Definition}
\newtheorem{remark}[theorem]{Remark}
\newtheorem{example}[theorem]{Example}
\numberwithin{equation}{section}
\begin{document}

\title[]{The centralizer construction and \\
Yangian-type algebras}

\author{Grigori Olshanski}

\thanks{Supported by the Russian Science Foundation under project 23-11-00150.}

\begin{abstract}

Let $d$ be a positive integer. The Yangian $Y_d=Y(\mathfrak{gl}(d,\mathbb C))$ of the general linear Lie algebra $\mathfrak{gl}(d,\mathbb C)$  has countably many generators and quadratic-linear defining relations, which can be packed into a single matrix relation using the Yang matrix --- the famous RTT presentation. Alternatively, $Y_d$  can be built from certain centralizer subalgebras of the  universal enveloping algebras $U(\mathfrak{gl}(N,\mathbb C))$, with the use of a limit transition as $N\to\infty$. This approach is called the \emph{centralizer construction}. 

The paper shows that a generalization of the centralizer construction leads to a new family $\{Y_{d,L}: L=1,2,3,\dots\}$ of Yangian-type algebras (the Yangian $Y_d$ being the first term of this family). For the new algebras, the RTT presentation seems to be missing. Nevertheless, the algebras $Y_\dd$ share a number of properties of the Yangian $Y_d$, including the existence of defining quadratic-linear commutation relations.

\end{abstract}

\date{}

\maketitle

\tableofcontents

\section{Introduction}

\subsection{Preliminaries}\label{sect1.1}

We start with a brief description of  the initial centralizer  construction (\cite{Ols-Doklady},  \cite{Ols-LOMI}, \cite{Ols-Limits}), which served us as a model. 

Consider the complex general linear Lie algebra $\gl(N,\C)$ and its universal enveloping algebra $U(\gl(N,\C))$. For $d\in\{0,\dots,N\}$, we denote by $\gl_d(N,\C)$ the subalgebra of $\gl(N,\C)$, isomorphic to $\gl(n-d,\C)$ and formed by the block matrices of the form $\begin{bmatrix} 0 & 0\\ 0 & X\end{bmatrix}$, where $X$ has the size $(n-d)\times(n-d)$. Next, let 
\begin{equation}\label{eq1.G}
A_d(N):=U(\gl(N,\C))^{\gl_d(N,\C)}
\end{equation}
denote the centralizer of $\gl_d(N,\C)$ in $U(\gl(N,\C))$. 

In this definition, the Lie subalgebra $\gl_d(N,\C)$ can be replaced by the corresponding Lie subgroup $GL_d(N,\C)\subset GL(N,\C)$, isomorphic to $GL(N-d,\C)$. So we can write
$$
A_d(N)=U(\gl(N,\C))^{GL_d(N,\C)}.
$$
In words, $A_d(N)$ is the subalgebra of $GL_d(N,\C)$-invariants in $U(\gl(N,\C))$; here we assume that the action of $GL_d(N,\C)$ on $U(\gl(N,\C))$ comes from its action on $\gl(N,\C)$ by conjugation.    

Note that $A_0(N)$ is the center of $U(\gl(N,\C))$. As is well known, it is isomorphic to the algebra of shifted symmetric polynomials in $N$ variables.\footnote{A polynomial in $N$ variables $x_1,\dots,x_N$ is said to be \emph{shifted symmetric} if it becomes symmetric in the new variables $y_i:=x_i+N-i$, $1\le i\le N$.}

The \emph{initial centralizer construction} goes as follows. 

By mimicking the definition of the Harish-Chandra homomorphism\footnote{About it see, e.g. Dixmier \cite[sect. 7.4]{Dixmier} or Molev \cite[sect. 7.1]{M}} one can define certain surjective algebra homomorphisms
\begin{equation}\label{eq1.A}
\pi_{N,N-1}: A_d(N)\to A_d(N-1), \quad N>d.
\end{equation}
These homomorphisms preserve the standard filtration of the universal enveloping algebras, which enables one to define, for any fixed $d\ge0$, the limit algebra
\begin{equation}\label{eq1.Q}
A_d:=\varprojlim (A_d(N),\pi_{N,N-1}), \qquad N\to\infty,
\end{equation}
where the projective limit is taken in the category of filtered algebras. 

In the case $d=0$, the result of the limit \eqref{eq1.Q} is a commutative algebra $A_0$, which can be identified, in a natural way, with the \emph{algebra of shifted symmetric functions}, a close relative of the algebra of symmetric functions $\Sym$.\footnote{The algebra $\Sym$ is the graded algebra associated to the filtered algebra of shifted symmetric functions.  A detailed discussion of topics related to shifted symmetric functions is contained in \cite{OkOls}, \cite{BO}.} 

For $d\ge1$, the structure of $A_d$ is described by the following theorem.

\begin{theorem}\label{thm1.AA}
Let $d$ be a positive integer. The algebra $A_d$ defined by \eqref{eq1.Q} is isomorphic to the tensor product $A_0\otimes Y_d$, where $Y_d=Y(\gl(d,\C))$ is the Yangian of the Lie algebra $\gl(d,\C)$.
\end{theorem}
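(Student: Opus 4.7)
The plan is to construct explicit generators of $A_d$ that play the role of the standard Yangian generators $t_{ij}^{(k)} \in Y_d$, verify that they satisfy the Yangian's quadratic-linear commutation relations, and then use a PBW-type argument to show that the multiplication map $A_0 \otimes Y_d \to A_d$ is an algebra isomorphism.

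First, I would define, for each $N \ge d$, each pair $1 \le i,j \le d$, and each $k \ge 1$, an element
$$
t_{ij}^{(k)}(N) = \sum_{a_1,\dots,a_{k-1}\in\{d+1,\dots,N\}} E_{i a_1} E_{a_1 a_2}\cdots E_{a_{k-1} j},
$$
where $E_{ab}$ are the matrix units of $\gl(N,\C)$. These lie in $A_d(N)$ because the set of summation indices is stable under $GL_d(N,\C)$. The delicate point is that the projections $\pi_{N,N-1}$ do not map these naive sums to their counterparts in $A_d(N-1)$; the discrepancy, however, lies in the center $A_0(N-1)$. One thus builds in a correction by certain central elements to produce a compatible sequence under $\pi_{N,N-1}$, which then defines an element $t_{ij}^{(k)} \in A_d$. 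Packing these into a generating series $T_{ij}(u) = \delta_{ij} + \sum_{k\ge1} t_{ij}^{(k)} u^{-k}$, let $Y_d \subset A_d$ be the subalgebra generated by the coefficients.

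Second, I would verify the Yangian defining relations, conveniently packaged as the RTT form
$$
(u-v)\,[T_{ij}(u), T_{kl}(v)] = T_{kj}(u) T_{il}(v) - T_{kj}(v) T_{il}(u).
$$
This is the main computational step. At each finite $N$, a direct commutator calculation in $U(\gl(N,\C))$ produces the right-hand side up to lower-order error terms that arise from the summation boundary. The heart of the argument is to recognize these error terms as central elements of $U(\gl(N,\C))$ whose images in $A_d(N)$ match the central corrections introduced in the first step, so that in the projective limit they either cancel or land in $A_0 \subset A_d$ and can be absorbed into the definition of the $t_{ij}^{(k)}$. This produces an algebra homomorphism $\varphi\colon Y_d \to A_d$ from the RTT presentation of $Y_d$.

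Third, I would combine $\varphi$ with the inclusion $A_0 \hookrightarrow A_d$ to obtain a map $\Phi\colon A_0 \otimes Y_d \to A_d$ and show it is an isomorphism by passing to associated graded algebras. The associated graded of $A_d(N)$ is $S(\gl(N,\C))^{GL_d(N,\C)}$, which by classical invariant theory is generated by traces of matrix powers (yielding the graded version of $A_0$) and by "partial traces" involving the upper-left $d\times d$ block (yielding the graded version of $Y_d$). Taking the projective limit and counting dimensions of graded components, one sees that the symbols of the central generators together with the symbols of the $t_{ij}^{(k)}$ are algebraically independent and jointly generate $\gr A_d$. Injectivity and surjectivity of $\Phi$ then follow from the filtered-vs-graded correspondence.

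The hard part will be step two. Already for the original Yangian, the RTT relation requires substantial bookkeeping, and here one must simultaneously manage the $N$-dependence of the error terms and confirm that the central corrections used to define the $t_{ij}^{(k)}$ are precisely those needed to kill the RTT anomaly in the limit. A clean way to organize this is to treat $T(u)$ as the limiting form of a resolvent-like series in $U(\gl(N,\C))$ and to exploit a finite-$N$ identity of Capelli or Liouville type that already encodes the RTT relation modulo the centre; the passage to the limit then becomes an identification of terms rather than a fresh computation.
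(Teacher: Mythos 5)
Your overall skeleton --- lift candidate generators into each $A_d(N)$, check Yangian-type relations at finite $N$, pass to the projective limit, and finish with an associated-graded/PBW argument --- is the same as in the proof the paper relies on (\cite{Ols-Limits}, \cite{M}; see section \ref{sect2.3}, and sections \ref{sect6}--\ref{sect7} for the general-$L$ analogue). The gap is in the correction mechanism on which your steps one and two both rest: the claim that the defect of $\pi_{N,N-1}$-consistency (and, later, the RTT anomaly) is \emph{central} is false. Already for $k=2$ one has
$$
\pi_{N,N-1}\bigl(t^{(2)}_{ij}(N)\bigr)=t^{(2)}_{ij}(N-1)+E_{ij},
$$
because $E_{iN}E_{Nj}=E_{Nj}E_{iN}+E_{ij}-\delta_{ij}E_{NN}$ and only $E_{ij}$ survives the projection. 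The defect $E_{ij}=t^{(1)}_{ij}(N-1)$ is a non-central element of $A_d(N-1)$, so no correction by elements of $A_0(N-1)$ can make your sequences compatible. The same phenomenon ruins the second step as formulated: a direct computation with your elements (intermediate indices in $\{d+1,\dots,N\}$, $1\le i,j,k,l\le d$) gives
$$
[t^{(2)}_{ij},t^{(2)}_{kl}]=\delta_{kj}t^{(3)}_{il}-\delta_{il}t^{(3)}_{kj}-t^{(1)}_{kj}t^{(2)}_{il}+t^{(1)}_{il}t^{(2)}_{kj}+\delta_{ij}t^{(2)}_{kl}-\delta_{kl}t^{(2)}_{ij},
$$
which deviates from \eqref{eq2.C} by non-central lower-degree terms of the same family, $\delta_{ij}t^{(2)}_{kl}-\delta_{kl}t^{(2)}_{ij}$, and by a sign in the quadratic part (up to reordering). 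So the planned matching of ``central error terms against central corrections'' cannot work as stated.

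The standard remedy, and the one used here, is to absorb the defect not by central elements but by an $N$-dependent shift of the spectral parameter, that is, by a triangular recombination of your elements within the same family $\{t^{(m)}_{ij}\}$: one takes the coefficients of $\bigl(\bigl(1-\frac{\mathbb E(N)}{u+N}\bigr)^{-1}\bigr)_{ij}$ as in section \ref{sect2.3} (the general-$L$ counterpart is Theorem \ref{thm6.A}, reformulated in Theorem \ref{thm6.A1}). With that choice the elements are \emph{exactly} consistent with $\pi_{N,N-1}$ and satisfy the Yangian relations exactly at each finite $N$, so no anomaly bookkeeping is needed. Once this is repaired, your third step goes through; note only that for $\Phi\colon A_0\otimes Y_d\to A_d$ to be an algebra (not merely linear) isomorphism you must also invoke that $A_0(N)$ is the center of $U(\gl(N,\C))$, hence $A_0$ commutes with the image of $Y_d$ --- a point special to $L=1$, and precisely where the statement fails to be an algebra isomorphism for $L\ge2$.
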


A detailed proof is contained in \cite[\S2.1]{Ols-Limits}. About the Yangian $Y_d$, see \cite{MNO}, \cite{M}, and also section \ref{sect2.1} below. 

The centralizer construction originated from the study of unitary representations of infinite-dimensional classical groups \cite{Ols-Doklady}, \cite{Ols-LOMI}. Then it was realized that the construction can be successfully applied to some other growing chains of classical Lie algebras $\aa(N)$ and subalgebras $\bb(N)\subset\aa(N)$, which are shown in the following table (lines B, C, D, Q):

\bigskip
\centerline{Table 1. Examples of chains $(\aa(N), \bb(N))$}\label{table}

\begin{align*}
&\aa(N) &    &\bb(N) &    &\text{Series} \\
&\gl(N,\C) & & \gl(N-d,\C) & &\text{A}\\
&\mathfrak{o}(2N+1,\C) &   &\mathfrak{o}(2(N-d),\C) & &\text{B}\\
&\mathfrak{sp}(2N,\C) &   &\mathfrak{sp}(2(N-d),\C) & &\text{C}\\
&\mathfrak{o}(2N,\C) &   &\mathfrak{o}(2(N-d),\C) & &\text{D}\\
&\mathfrak{q}(N,\C) &   &\mathfrak{q}(N-d,\C) & &\text{Q}
\end{align*}
(the last line refers to the queer Lie superalgebras). 

As in the case of the series A discussed above, for each of the series B, C, D, Q, there still exist homomorphisms of centralizers,
$$
\pi_{N,N-1}: U(\aa(N))^{\bb(N)} \to U(\aa(N-1))^{\bb(N-1)},
$$
which make it possible to define the projective limit algebra 
$$
\varprojlim (U(\aa(N))^{\bb(N)}, \pi_{N,N-1}).
$$
For the series B, C, and D, this led to the discovery of the \emph{twisted Yangians}  (\cite{Ols-LN}, \cite{MNO}, \cite{MO-2000}). The case of the series Q was studied by Nazarov \cite{N} and Nazarov--Sergeev \cite{NS}. 
A comprehensive exposition of the theory related to the Yangian $Y_d$ and the twisted Yangians is contained in Molev's monograph \cite{M}. 

\subsection{Main results: new Yangian-type algebras $Y_{d,L}$}\label{sect1.2}

In the present paper, we apply the centralizer construction to another family of  chains $(\aa(N), \bb(N))$ (an extension of line A in Table 1). Namely, for a fixed integer $L\ge2$ we set  
\begin{equation}\label{eq1.P}
\aa(N)=\gl(N,\C)^{\oplus L}, \qquad \bb(N)=\diag(\gl_d(N,\C)).
\end{equation}
In words, $\aa(N)$ is the direct sum of $L$ copies of $\gl(N,\C)$, and $\bb(N)$ is the Lie algebra $\gl_d(N,\C)\simeq \gl(N-d,\C)$, which is embedded diagonally into $\aa(N)$. 

In what follows we use the shorthand notation for the centralizers:
\begin{equation}\label{eq1.I}
A_\dd(N):=U(\gl(N,\C)^\oL)^{\diag \gl_d(N,\C)}, \qquad 0\le d\le N.
\end{equation}
Again, there exist certain algebra morphisms 
\begin{equation}\label{eq1.A1}
\pi_{N,N-1}: A_\dd(N) \to A_\dd(N-1), \quad N>d.
\end{equation}
They are filtration-preserving, so that one can again define a projective limit filtered algebra,
$$
A_\dd:=\varprojlim (A_\dd(N),\pi_{N,N-1}), \qquad N\to\infty.
$$

In particular, for $d=0$ we obtain the algebra
$$
A_{0,L}=\varprojlim U(\gl(N,\C)^\oL)^{\diag \gl(N, \C)},
$$
which is embedded as a subalgebra into $A_\dd$ for each $d\ge1$. In contrast to the case $L=1$, the algebra $A_{0,L}$ with $L\ge2$ is not  commutative. 

Our main results can be briefly stated as the following two theorems. 

\begin{theorem}[Extraction of a Yangian-type subalgebra $Y_\dd\subset A_\dd$]\label{thm1.BB}

Let $d\ge1$ and $L\ge2$. There exists a subalgebra $Y_\dd\subset A_\dd$ such that, as a vector space,  $A_\dd$ splits into the tensor product $A_{0,L}\otimes Y_\dd$. 
\end{theorem}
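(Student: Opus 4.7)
The plan is to adapt the approach used for Theorem~\ref{thm1.AA} to the $L$-colored setting: I would build explicit generators of $Y_\dd$ inside $A_\dd$ and then verify that, together with natural generators of $A_{0,L}$, they span $A_\dd$ freely as a vector space.

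For the generators, I would take $r\ge 1$, indices $i,j\in\{1,\dots,d\}$, and a color sequence $\vec\ell=(\ell_1,\dots,\ell_r)\in\{1,\dots,L\}^r$, and set
\[
y_{ij}^{\vec\ell}(N):=\sum_{a_1,\dots,a_{r-1}=d+1}^{N}E^{(\ell_1)}_{ia_1}E^{(\ell_2)}_{a_1a_2}\cdots E^{(\ell_r)}_{a_{r-1}j},
\]
where $E^{(\ell)}_{ab}$ denotes the matrix unit in the $\ell$-th copy of $\gl(N,\C)$. Since matrix units from different copies commute, verifying that these elements commute with $\sum_{\ell}E^{(\ell)}_{ab}$ for $a,b>d$, i.e.\ lie in $A_\dd(N)$, proceeds color-by-color and reduces to the classical telescoping identity of the $L=1$ case. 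Compatibility $\pi_{N,N-1}(y_{ij}^{\vec\ell}(N))=y_{ij}^{\vec\ell}(N-1)$ is inherited for the same reason, so the $y_{ij}^{\vec\ell}(N)$ assemble into elements $y_{ij}^{\vec\ell}\in A_\dd$. I would then define $Y_\dd$ as the subalgebra of $A_\dd$ they generate.

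For the tensor decomposition I would pass to the associated graded: by PBW, $\gr A_\dd(N)\cong(S(\gl(N,\C))^{\otimes L})^{\diag GL_d(N,\C)}$. Writing each of the $L$ symbol-matrices in $2\times 2$ block form with block sizes $d$ and $N-d$, the first fundamental theorem of classical invariant theory for several matrices under simultaneous conjugation yields three families of generating invariants: (a) entries of the upper-left $d\times d$ blocks, (b) traces of arbitrary colored words in the lower-right $(N-d)\times(N-d)$ blocks, and (c) matrix entries (indexed by $i,j\le d$) of colored words that begin with an upper-right block, pass through lower-right blocks, and end at a lower-left block. In the stable limit $N\to\infty$, family (b) realizes the symbol of a natural generating set of $A_{0,L}$, while families (a) and (c) realize the symbols of the $y_{ij}^{\vec\ell}$. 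From here one argues that the multiplication map $A_{0,L}\otimes Y_\dd\to A_\dd$ is filtration-preserving and induces an isomorphism on associated gradeds; the standard filtered-algebra lemma then yields the desired vector-space isomorphism $A_\dd\cong A_{0,L}\otimes Y_\dd$.

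The main obstacle I expect is the stabilization step. At each finite $N$ the invariants above satisfy nontrivial Procesi--Razmyslov-type trace relations whose coefficients depend explicitly on $N-d$; one must verify that these relations disappear in the projective limit (a phenomenon responsible for the appearance of the pure Yangian in the $L=1$ case) and, crucially, that no hidden relation in $A_\dd$ couples $A_{0,L}$ with $Y_\dd$ beyond mere commutation. This is more delicate than for $L=1$ because $A_{0,L}$ is itself noncommutative and its generators interact nontrivially with the $y_{ij}^{\vec\ell}$; resolving it will likely require a careful Hilbert-series comparison at each filtration level, leveraging the explicit stabilization bound built into $\pi_{N,N-1}$, namely that a given filtration piece stops changing once $N$ exceeds a bound depending only on the degree.
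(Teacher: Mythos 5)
Your construction of the generators collapses at the consistency step, and this is the crux of the whole theorem. Even with the inner summation indices restricted to $\{d+1,\dots,N\}$, the elements $y^{\vec\ell}_{ij}(N)$ are \emph{not} compatible with the projections $\pi_{N,N-1}$ of section \ref{sect3.1}: compatibility holds in the symmetric algebra (where dropped monomials containing the index $N$ lie in $\mathcal I^+(N)$), but in $U(\gl(N,\C)^{\oplus L})$ those monomials are only congruent to \emph{lower-degree} terms modulo $I^+(N)$, not to zero. Concretely, take $d=1$, $i=j=1$ and the word $\ell\ell$ (one color repeated), so $y(N)=\sum_{a=2}^N E_{1a\mid\ell}E_{a1\mid\ell}$. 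Then $y(N)-y(N-1)=E_{1N\mid\ell}E_{N1\mid\ell}=E_{N1\mid\ell}E_{1N\mid\ell}+E_{11\mid\ell}-E_{NN\mid\ell}$, and since $E_{N1\mid\ell}E_{1N\mid\ell}-E_{NN\mid\ell}$ lies in $L(N)$ while $E_{11\mid\ell}$ does not, one gets $\pi_{N,N-1}(y(N))=y(N-1)+E_{11\mid\ell}$. So your claim that compatibility ``is inherited for the same reason'' is false (it fails already for $L=1$), and the $y^{\vec\ell}_{ij}(N)$ do not assemble into elements of $A_\dd$. This is exactly the difficulty the paper addresses in Section \ref{sect6}: the liftings must carry correcting terms of lower degree, produced by the generating matrix $\bigl(1-\sum_\al E(N)_\al(u_\al+N+s)^{-1}\bigr)^{-1}$ with free noncommuting variables, where the shift by $N$ in the denominators compensates the drift $s\mapsto s-1$ under $\pi_{N,N-1}$ (Theorem \ref{thm6.A}).

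There is a second gap at the end. Your graded-level analysis (FFT for several matrices plus stabilization) is essentially the paper's Section \ref{sect4} and, combined with the PBW-type Lemma \ref{lemma4.D}, it yields a basis of $A_\dd$ by ordered monomials in liftings, hence surjectivity of the multiplication map $A_{0,L}\otimes Y_\dd\to A_\dd$. The delicate point is injectivity, which amounts to showing that $\gr Y_\dd$ is no larger than the subalgebra $\YY_\dd$ generated by the symbols $p_{ij}(w)$: a priori a commutator of two generators of $Y_\dd$ could have a symbol involving the trace generators $p(\ww)$, making $Y_\dd$ too big and destroying the splitting. The paper rules this out by proving $N$-independent quadratic-linear commutation relations (Stability Lemma \ref{lemma7.A} and Theorem \ref{thm7.A}), which show that normal monomials in the $t_{ij}(w;s)$ alone span $Y_\dd$, whence $\gr Y_\dd=\YY_\dd$ and the vector-space splitting (Theorem \ref{thm7.B}). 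Your closing remark about ``hidden relations'' and a ``Hilbert-series comparison'' correctly identifies the danger but provides no mechanism to exclude it; the missing ingredient is precisely these stable commutation relations.
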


In other words, the multiplication map $A_{0,L}\otimes Y_\dd\to A_\dd$ is an isomorphism of vector spaces. However, in contrast to the case $L=1$, the two subalgebras do not commute, so this map is \emph{not} an algebra isomorphism. 

Theorem \ref{thm1.BB} is a generalization of Theorem \ref{thm1.AA} to the case $L\ge2$, but our proof is not a direct extension of the argument that was used in the case $L=1$. In that case, we knew a priori what  the Yangian $Y_d$ is and we possessed a detailed information about its structure. For this reason, after the connection of the centralizer construction with the Yangian was guessed, finding the desired embedding $Y_d\to A_d$ did not present any special difficulties.

In the case $L\ge2$, the situation was different, as no a priori information about $Y_\dd$ existed. We could only rely on some similarity with $Y_d$, but had to invoke new ideas for the proof of Theorem \ref{thm1.BB} (as well as for Theorem \ref{thm1.CC} stated below). One such idea was to exploit a stability property which holds for some special elements in the enveloping algebras $U(\gl(N,\C)^\oL)$,  see section \ref{sect1.3.4} below. 

\begin{theorem}[Presentation of $Y_\dd$]\label{thm1.CC}
Let again $d\ge1$ and $L\ge2$. There exists a presentation of the algebra $Y_\dd$ by countably many generators and quadratic-linear commutation relations. 
\end{theorem}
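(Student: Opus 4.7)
The plan is to exhibit an explicit countable family of generators of $Y_\dd$, compute their mutual commutators in $U(\gl(N,\C)^{\oL})$ and pass to the limit to obtain quadratic-linear relations, and finally prove that these relations are complete by comparing associated graded algebras using Theorem \ref{thm1.BB}.

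First, I would construct the generators. Write $E_{ab}[\ell]$ for the matrix unit placed in the $\ell$-th summand of $\gl(N,\C)^{\oL}$. For each word $\mathbf{a}=(a_1,\dots,a_r)$ in the alphabet $\{1,\dots,L\}$ and each pair of indices $i,j\in\{1,\dots,d\}$, I expect a generator $\tau_{ij}^{\,\mathbf{a}}\in Y_\dd$ defined as a projective limit of path sums
$$
\tau_{ij}^{\,\mathbf{a}}(N)=\sum_{k_1,\dots,k_{r-1}} E_{i\,k_1}[a_1]\, E_{k_1\,k_2}[a_2]\cdots E_{k_{r-1}\,j}[a_r],
$$
with intermediate indices restricted to the complement of the block $\{1,\dots,d\}$, so as to produce centralizer elements. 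The stability property alluded to in the introduction should make these compatible with the morphisms $\pi_{N,N-1}$, and a suitable correction by an element of $A_{0,L}$ places them inside the subalgebra $Y_\dd$. By Theorem \ref{thm1.BB}, they generate $Y_\dd$.

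Second, I would compute the commutators $[\tau_{ij}^{\,\mathbf{a}},\,\tau_{kl}^{\,\mathbf{b}}]$ using the elementary bracket
$$
[E_{ab}[\ell],\,E_{cd}[\ell']]=\delta_{\ell\ell'}\bigl(\delta_{bc}E_{ad}[\ell]-\delta_{ad}E_{cb}[\ell]\bigr)
$$
and the Leibniz rule. The factor $\delta_{\ell\ell'}$ forces matchings between letters of $\mathbf{a}$ and $\mathbf{b}$; at each match, the Kronecker contraction on the spatial indices either splits a path into two (yielding a quadratic term in the $\tau$'s) or merges two paths into one (yielding a linear term). After verifying that boundary contributions from the truncated index range cancel in the limit $N\to\infty$, one obtains relations of the form
$$
[\tau_{ij}^{\,\mathbf{a}},\,\tau_{kl}^{\,\mathbf{b}}]=\sum\alpha\,\tau\tau+\sum\beta\,\tau,
$$
which are manifestly quadratic-linear.

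Third, let $\wt Y$ denote the abstract algebra on symbols $\wt\tau_{ij}^{\,\mathbf{a}}$ subject to the relations of the previous step. The evaluation map $\Phi\colon\wt Y\to Y_\dd$ is surjective by construction. To prove it is an isomorphism, I would filter both sides by total letter-length and compare associated graded algebras. On the $Y_\dd$ side, the splitting $A_\dd\cong A_{0,L}\otimes Y_\dd$ of Theorem \ref{thm1.BB}, combined with the PBW theorem for $U(\gl(N,\C)^{\oL})$, identifies $\gr Y_\dd$ explicitly as a polynomial-type algebra. On the $\wt Y$ side, the quadratic parts of the relations define a rewriting system on ordered monomials; once all ambiguities are resolved via a diamond-lemma argument, ordered monomials form a basis of $\gr\wt Y$. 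Matching the two descriptions gives the desired isomorphism.

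I expect the main obstacle to be precisely this last step, i.e.\ confluence of the rewriting system. Unlike the Yangian case $L=1$, where the RTT presentation and the Yang $R$-matrix encode confluence in a single Yang--Baxter equation, no such structure is available for $L\ge 2$, so the combinatorics of matchings between words in $\{1,\dots,L\}^*$ must be handled directly --- most likely by interpreting the quadratic parts of the relations in terms of a shuffle-like operation for which a PBW-style basis can be written down by hand.
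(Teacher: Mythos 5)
Your overall skeleton (path-sum generators, commutators via the Leibniz rule, completeness by comparison with the associated graded algebra) is the same as the paper's, but the two places where you wave your hands are exactly where the paper's real work lies, and as written the construction would not go through. First, the generators: restricting the intermediate indices to the complement of $\{1,\dots,d\}$ is unnecessary for invariance (the full-range sums $e_{ij}(w;N)$ already lie in the centralizer for $i,j\le d$, Lemma \ref{lemma6.A}), and --- restricted or not --- such elements are \emph{not} compatible with $\pi_{N,N-1}$: terms involving the index $N$ are not simply discarded by the projection but leave lower-degree residues (e.g.\ $E_{iN\mid\al}E_{Nj\mid\al}\equiv E_{ij\mid\al}$ modulo $I^+(N)$). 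The required correction is not ``an element of $A_{0,L}$'' but a triangular family of lower-degree terms $e_{ij}(w')$ with $w'\prec w$; the paper produces it in Section \ref{sect6} by inverting $1-\sum_\al E(N)_\al (u_\al+N+s)^{-1}$ in free noncommuting variables, and the consistency statement (Theorem \ref{thm6.A}) is a genuine computation in which a shift $s\mapsto s-1$ appears and must be compensated by the $N$-dependent shift of the parameters. The ``stability property'' you invoke for this purpose is a different statement (Lemma \ref{lemma7.A}, about commutation relations), not about compatibility with the projections; also, Theorem \ref{thm1.BB} by itself does not tell you that your elements generate the subalgebra $Y_\dd$ appearing there. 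Second, the crux of the quadratic-linear relations is that the coefficients in the expansion of $[e_{ij}(w;N),e_{kl}(\wt w;N)]$ must be \emph{independent of $N$} (and, for the corrected generators $t_{ij}(w;s)$, independent of $s$); ``boundary contributions cancel in the limit'' does not address this --- a contraction $\sum_a\de_{aa}$ would produce explicit factors of $N$, and the Stability Lemma's inductive bookkeeping with four orderings is designed precisely to exclude such contractions, with a separate two-sided stability argument (Lemma \ref{lemma7.D}) for the $s$-independence.

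Your last step is also misdirected: no confluence or diamond-lemma argument is needed, and verifying confluence for relations that are only given by a recursive procedure would be unmanageable. The paper's route is the standard shortcut: in the abstract algebra on the symbols, the quadratic-linear relations let one rewrite any word as a linear combination of normal monomials (each transposition lowers the total degree, so the rewriting terminates --- spanning needs no confluence); and the images of distinct normal monomials in $Y_\dd$ are linearly independent because their top-degree terms are distinct monomials in the generators $p_{ij}(w)$ of $\gr Y_\dd=\YY_\dd$, which are algebraically independent by Lemma \ref{lemma4.B1} and Theorem \ref{thm4.B}. Spanning upstairs together with linear independence downstairs forces the surjection from the abstract algebra onto $Y_\dd$ to be an isomorphism, so the ``main obstacle'' you anticipate simply does not arise.
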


In the case $L=1$, that is, for the Yangian $Y_d$, this fact is well known: then the infinite system of defining relations can be packed into a single matrix relation --- the famous RTT presentation with the Yang $R$-matrix:
\begin{equation}\label{eq1.II}
R(u_1-u_2)T_1(u_1)T_2(u_2)=T_2(u_2)T_1(u_1)R(u_1-u_2)
\end{equation} 
(we recall it in section \ref{sect2.1}).  

In the case $L\ge2$, there is an analog of the $T$-matrix but it depends on \emph{free noncommuting} parameters $u_1,\dots,u_L$ (see sections \ref{sect6.2}, \ref{sect6.3}). It is not clear whether the $R$-matrix formalism can be somehow adapted to this situation. The lack of the RTT presentation makes the proof of Theorem \ref{thm1.CC} difficult.

Our method yields an explicit description of a system of generators of $Y_\dd$ and shows the existence of defining quadratic-linear commutation relations. The proof is constructive in the sense that it provides an algorithm for calculating the relations. An open problem is to find for these relations a more explicit description. 

\subsection{Some details and comments}

\subsubsection{Associated Poisson algebras}\label{sect1.3.1}

Because $A_\dd$ is a filtered algebra, we may form the associated graded algebra $P_\dd:=\gr A_\dd$. It is commutative and has a system of homogeneous, algebraically independent generators, which split into two parts:
\begin{equation}\label{eq1.DD}
\{p(\ww): \ww\in\wh W_L\} \quad \text{and} \quad \{p_{ij}(w): w\in W_L, \; 1\le i,j\le d\}.
\end{equation}
Here $W_L$ denotes the set of nonempty words in the alphabet $[L]:=\{1,\dots,L\}$ and $\wh W_L:=W_L/\!\!\sim$  is the set of circular words (a \emph{circular word} $\ww$ is an equivalence class of ordinary words $w$ with respect to cyclic permutations of the letters). 

The degree of each generator equals the length of the corresponding word; it follows that all homogeneous components of $P_\dd$ have finite dimension. The elements $p(\ww)$  generate a subalgebra $P_{0,L}\subset P_\dd$,  which is identified with $\gr A_{0,L}$. The elements $p_{ij}(w)$ generate another subalgebra, which we denote by 
$\YY_\dd$. 

Next, $P_\dd$ is a Poisson algebra with the bracket $\{-,-\}$ induced by the commutator $[-,-]$ in $A_\dd$, and both $P_{0,L}$ and $\YY_\dd$ are Poisson subalgebras. Note also that for $L\ge2$,  the restriction of $\{-,-\}$ to $P_{0,L}\times \YY_\dd$ is nontrivial. Thus, for $L\ge2$, $P_\dd$ splits into the tensor product $
P_{0,L}\otimes \YY_\dd$ as a vector space only, and not as a Poisson algebra.

In fact, $\YY_\dd$ is nothing else than $\gr Y_\dd$. Or, say it differently, $Y_\dd$ can be viewed as a filtered quantization of the Poisson algebra $\YY_\dd$. 

Because $\{-,-\}$ is nontrivial on $P_{0,L}\times \YY_\dd$ for $L\ge2$, we see that in this case, the subalgebras $A_{0,L}$ and $Y_\dd$ do not commute, as noted above.  

\subsubsection{The Poisson algebra $P_{0,L}$}\label{sect1.3.2}

By the very definition, the algebra $P_{0,L}$ is the following projective limit in the category of graded algebras:
\begin{equation}\label{eq1.EE}
P_{0,L}=\varprojlim P_{0,L}(N), \qquad P_{0,L}(N):=S(\gl(N,\C)^\oL)^{\diag\gl(N,\C)},
\end{equation}
where the action of $\diag\gl(N,\C)$ comes from the adjoint representation (the Lie algebra $\gl(N,\C)$ can be replaced here by the Lie group $GL(N,\C)$). 

Upon identification of the vector space $\gl(N,\C)$ with its dual, one may treat $P_{0,L}(N)$ as the algebra of polynomial invariants of $L$-tuples $X_1,\dots,X_L$ of $N\times N$ matrices. The latter algebra is well known: it is generated by the invariants of the form
\begin{equation}\label{eq1.FF}
p_{w;N}(X_1,\dots,X_L):=\tr(X_{w_1}\dots X_{w_n}),
\end{equation}
where $w=w_1\dots w_n$ is a word in the alphabet $[L]$ (Procesi \cite[Theorem 1.3]{Pr}). Since $p_{w;N}$ is not affected by the cyclic permutations of the letters, we then rename $p_{w;N}$ to $p_{\ww;N}$, where $\ww$ stands for the cyclic word obtained from $w$. In this notation, the invariants $p_{\ww;N}$ correspond to the generators $p(\ww)$ under the natural projection $P_{0,L}\to P_{0,L}(N)$. 

Note that the $N$-th level invariants $p_{\ww,N}$ satisfy a system of relations (Procesi \cite[Theorem 4.5]{Pr}). But the relations disappear in the limit transition, and the generators $p(\ww)$ become algebraically independent. So the algebraic structure of $P_{0,L}$ is very simple.  What makes $P_{0,L}$ an interesting object is its Poisson structure. 

The bracket between the generators of $P_{0,L}$ is linear, so that the linear span of these generators is a Lie algebra; we denote it by $\Lie_L$. We remark that $\Lie_L$ is the \emph{necklace Lie algebra} corresponding to the star-shape quiver with $L$ edges (about the necklace Lie algebras tied to general quivers, see Bocklandt--Le Bruyn \cite{BLB}, Ginzburg \cite{Gin}). 

Therefore, the algebra $A_{0,L}$ can be viewed as a filtered quantization of $(S(\Lie_L), \{-,-\})$. It would be interesting to compare our construction of $A_{0,L}$  with the general results about quantization of necklace Lie algebras obtained in Schedler \cite{Sch}, Ginzburg--Schedler \cite{GinS}. 

\subsubsection{The Poisson algebra $\YY_\dd$}\label{sect1.3.3}

As mentioned above, $\YY_\dd$ has a system $\{p_{ij}(w)\}$ of homogeneous, algebraically independent generators (see \eqref{eq1.DD} and the comment after it). Given a bi-index $ij$ and a word $w=w_1\dots w_n$, the corresponding element $p_{ij}(w)\in \YY_\dd$ is determined by a sequence $\{p_{ij;w;N}: N\ge d\}$ of $GL_d(N,\C)$-invariant polynomial functions in $L$ matrix arguments (cf. \eqref{eq1.FF}), where
\begin{equation}\label{eq1.GG}
p_{ij;w;N}(X_1,\dots,X_L):=(X_{w_1}\dots X_{w_n})_{ij}.
\end{equation}

For the generators $p_{ij}(w)$, the Poisson bracket $\{-,-\}$ is given by a quadratic-linear expression (see \eqref{eq5.A}), which is closely related to a special \emph{double Poisson bracket} (in the sense of Van den Bergh \cite{vdB}) on the free associative algebra with $L$ generators. That special bracket already appeared in the paper \cite{AKKN} by Alekseev, Kawazumi, Kuno, and Naef, where it was called the KKS (Kirillov-Kostant-Souriau) bracket; its existence also easily follows from results of \cite{vdB}.

\subsubsection{Stabilization in the universal enveloping algebras $U(\gl(N,\C)^\oL)$}\label{sect1.3.4}
The standard basis of the Lie algebra $\gl(N,\C)$ is formed by the matrix units $E_{ij}$. More generally, a basis in $\gl(N,\C)^\oL$ is formed by the elements $E_{ij\mid r}$, where the extra index $r$ is a ``letter'' $r$ in our alphabet $[L]$, indicating that we take the matrix unit $E_{ij}$ in the $r$th copy of $\gl(N,\C)$.   

Next, we introduce \emph{special elements} in $U(\gl(N,\C)^\oL)$, depending on a bi-index $ij$ and a word $w=w_1\dots w_m\in W_L$, as follows
\begin{equation}\label{eq1.HH}
e_{ij}(w;N):=\sum_{a_1,\dots,a_{n-1}=1}^N E_{ia_1\mid w_1}E_{a_1a_2\mid w_2}\dots E_{a_{n-1}j\mid w_n}.
\end{equation}

Lemma \ref{lemma7.A} shows that the commutation relations between the special  elements can be written in a form that does not depend on $N$. In the case $L=1$, the lemma can be readily derived from the RTT presentation of the Yangian, but for $L\ge2$ this approach does not work and we give a direct proof. 

This ``Stability Lemma'' underlies our proof of the main theorems; it is of independent interest. 

Note also that the stable commutation relations for the special elements \eqref{eq1.HH} can be used for an alternative definition of the algebras $Y_\dd$, see Remark \ref{rem7.C}.

\subsubsection{Shift automorphisms of $Y_\dd$}\label{sect1.3.5}

Note that the RTT equation \eqref{eq1.II} is invariant under shifts of the parameter $u$. This immediately implies the existence of  an action of the additive group $\C$ by automorphisms of the Yangian $Y_d$. It turns out that a similar action exists for the algebras $Y_\dd$, but the proof is rather tricky.  

\subsubsection{An analog of the polynomial current Lie algebra $\gl(d,\C[x])$}\label{sect1.3.6}

The Lie algebra $\gl(d,\C[x])$ is formed by the $d\times d$ matrices with values in the algebra of polynomials $\C[x]$. As is well known, the Yangian $Y_d$ has a connection with this Lie algebra. For instance, the Poisson algebra $\gr Y_d$ is a deformation of the Poisson algebra $S(\gl(d,\C[x]))$. A similar fact holds for the algebras $Y_\dd$ with $L\ge2$, but then the role of $\C[x]$ is played by a noncommutative graded associative algebra $\Al_L$ (see section \ref{sect5.3}).

\subsubsection{About the ``size'' of $Y_\dd$}\label{sect1.3.7}
The graded algebra $\gr Y_d$ associated to the Yangian $Y_d$ has a constant number of homogeneous generators in each degree $n=1,2,\dots$, equal to $d^2$. For $L\ge2$, the number of generators of the algebra $\YY_\dd=\gr Y_\dd$  of degree $n$ becomes $d^2 L^n$, so that it grows exponentially as $n\to\infty$. This simple observation demonstrates a qualitative difference of the algebras $Y_\dd$  from the Yangian $Y_d$: the former are much more massive than the latter.  

It is all the more surprising that the algebras $Y_\dd$ share a number of properties of $Y_d$.  This is why we call them \emph{Yangian-type algebras}. 

\subsection{Further development} 

\subsubsection{} 
In the recent note \cite{OS1}, joint with Nikita Safonkin, we briefly describe an extension of the results of the present work to some more general chains $(\aa(N),\bb(N))$ of Lie algebras. Specifically, we take (cf.  \eqref{eq1.P})  
$$
\aa(N)=\gl(N,\Om), \quad \bb(N):=\gl_d(N,\C),
$$
where $\Om$ stands for an arbitrary associative algebra $\Om$. In this more general context, there arises a direct link with a result of Pichereau and Van de Weyer \cite{PW}. These authors introduced a natural class of double Poisson brackets on the free algebras, the so-called \emph{linear} double Poisson brackets. Such a bracket is determined by a collection of structure constants of an associative algebra; the latter can be arbitrary, and in our context, it is precisely what we denote by $\Om$. It turns out that the Yangian-type algebras described in \cite{OS1} provide a kind of quantization for the linear double brackets. 

\subsubsection{} Van den Bergh's \cite{vdB} showed that any double Poisson bracket on an associative algebra $\mathcal A$ induces a Poisson structure on the space $\Rep(\mathcal A,d)$ of $d\times d$ matrix representations $\mathcal A\to \Mat(d,\C)$, for each positive integer $d$. In the recent paper \cite{OS2}, also joint with Nikita Safonkin, we propose a variation of this Van den Bergh's construction. Namely, the space $\Rep(\mathcal A,d)$ is replaced by a subspace 
$$
\Rep_{\phi,\tau}(\mathcal A,d)\subset  \Rep(\mathcal A,d)
$$
of \emph{involutive representations}: here $\phi$ denotes an involutive antiautomorphism of $\mathcal A$, $\tau$ denotes an involutive automorphism $\tau$ of  $\Mat(d,\C)$ (note that $\tau$ is nothing else as transposition with respect to a quadratic or symplectic form on $\C^d$), and a representation $T:\mathcal A\to \Mat(d,\C)$ is said to be involutive if it satisfies $T\circ \phi=\tau\circ T$. 

This variation originated from an extension of the computation from Proposition \ref{prop5.A}  to the orthogonal and symplectic Lie algebras (see \cite[section 5]{OS2}). The idea follows a general recipe: if some fact is established in the framework of series A, try to find analogs for the series B,C, and D.

I believe that further study of the centralizer construction for chains $(\aa(N),\bb(N))$ beyond Table 1 may lead to interesting results.

\subsection{Organization of the paper}

Section \ref{sect2} is a short summary of known facts about the Yangian $Y_d$ and the initial centralizer construction. This material serves for comparison  with the results obtained in the case $L\ge2$. 

In Section \ref{sect3} we describe the centralizer construction leading to the algebra $A_\dd$ and a parallel construction in the context of symmetric algebras, which gives the associated commutative graded algebra $P_\dd=\gr A_\dd$. 

In Section \ref{sect4} we deal with the generators \eqref{eq1.DD} of the algebra $P_\dd$. The main results of the section are Theorems \ref{thm4.A} and \ref{thm4.B}. As an easy corollary we obtain that the algebra $A_\dd$ possesses a kind of  PBW (Poincar\'e-Birkhoff-Witt) property.  

Section \ref{sect5} is devoted to the Poisson structure of $P_\dd$. We obtain explicit expressions for the Poisson bracket $\{-,-\}$ between the generators \eqref{eq1.DD} (Propositions \ref{prop5.A}, \ref{prop5.B}, and \ref{prop5.C}). Then we discuss connections with the KKS double bracket on the free algebra $\FF_L$, with the Lie algebra $\gl(d,\Al_L)$, and with necklace Lie algebras.

In Section \ref{sect6},   we construct a one-parameter family of liftings of the generators $p_{ij}(w)\in \YY_\dd$ to the algebra $A_\dd$:
\begin{equation}\label{eq1.Q1}
p_{ij}(w) \rightsquigarrow t_{ij}(w;s), \qquad 1\le i,j\le d, \quad w\in W_L;
\end{equation}
here $s\in\C$ is an extra parameter. The results of the computation are summarized in section \ref{sect6.6}. 
Note that for every fixed bi-index $ij$ and any two fixed values $s, s'$ of the parameter, the two systems 
$$
\{t_{ij}(w;s): w\in W_L\}\quad \text{and} \quad \{t_{ij}(w';s'): w'\in W_L\},
$$ 
are expressed through each other by a triangular linear transformation.  

In Section \ref{sect7}, we begin with the Stability Lemma \ref{lemma7.A}. Using it, we prove that for any fixed $s\in\C$, the elements $t_{ij}(w;s)$ obey a system of quadratic-linear commutation relations (Theorem \ref{thm7.A}). This in turn quickly leads us to the proof of the main results of the paper: for any fixed $s\in\C$, the elements $t_{ij}(w;s)$ generate a subalgebra $Y_\dd\subset A_\dd$ with the desired properties, it does not depend on $s$, and the commutation relations are the defining ones (Theorem \ref{thm7.B} and Theorem \ref{thm7.C}, item (i)). 

The fact that the lifting \eqref{eq1.Q1} involves an extra parameter enables us to construct  the shift automorphisms of $Y_\dd$ mentioned in section \ref{sect1.3.5} above: these automorphisms are given by the shifts of the parameter $s$ (Theorem \ref{thm7.C}, item (ii)). 

Finally, Section \ref{sect8} contains a few remarks and open problems. 

\subsection{Acknowledgments}
I am grateful to Nikita Safonkin for discussions and for cooperation in our joint works \cite{OS1}, \cite{OS2}. I am also grateful to Rodion Zaytsev for creating a computer program based on the Stability Lemma \ref{lemma7.A}.

\section{More about the initial centralizer construction}\label{sect2}

\subsection{The Yangian $Y_d$}\label{sect2.1}

Recall the definition of the Yangian $Y_d=Y(\gl(d,\C))$ ($d=1,2,\dots$).  It is an associative algebra with countably many generators $t^{(m)}_{ij}$ (where $1\le i,j\le d$ and $m=1,2,\dots$) and countably many defining relations
$$
[t_{ij}^{(m+1)}, t_{kl}^{(n)}]-[t_{ij}^{(m)},t_{kl}^{(n+1)}]=t_{kj}^{(m)}t_{il}^{(n)}-t_{kj}^{(n)}t_{il}^{(m)},
$$
where $m,n=0,1,2,\dots$ and $t_{ab}^{(0)}:=\de_{ab}$. 

The defining relations can be equivalently written in the following (slightly asymmetric) form: 
\begin{equation}\label{eq2.C}
[t^{(m)}_{ij}, t^{(n)}_{kl}]=\de_{kj} t^{(m+n-1)}_{il} -  \de_{il}t^{(m+n-1)}_{kj} +\sum_{r=1}^{\min(m,n)-1}\big(t^{(r)}_{kj}t^{(m+n-1-r)}_{il}- t^{(m+n-1-r)}_{kj}t^{(r)}_{il}\big),
\end{equation}
where $m,n=1,2,\dots$\,.
With the quadrating terms suppressed, the right-hand side gives the defining relations of $U(\gl(d,\C)[x])$, the universal enveloping algebra of the polynomial current Lie algebra $\gl(d,\C[x])$. The Yangian $Y_d$ is a nontrivial deformation of $U(\gl(d,\C[x]))$. 

The system \eqref{eq2.C} of the defining relations of the Yangian $Y_d$ can be compactly written as a single ``RTT=TTR'' relation
\begin{equation}\label{eq2.E}
R(u-v)\T_1(u)\T_2(v)=\T_2(v)\T_1(u)R(u-v).
\end{equation}
Here both sides are elements of the algebra $Y_d\otimes \End\C^d\otimes\End\C^d$; $R(u)$ is the Yang $R$-matrix,
$$
R(u):=1-\frac1u\sum_{i,j=1}^d \mathcal E_{ij}\otimes \mathcal E_{ji}\; \in\; \left(\End\C^d\otimes\End\C^d\right)[u^{-1}],
$$
where $u$ and $v$ are commuting formal variables, $\{\mathcal E_{ij}\}$ is the basis of $\End \C^d$ formed by the the matrix units;  $\T_1(u)$ and $\T_2(u)$ are two copies of the matrix 
$$
\T(u):=\sum_{i,j=1}^d t_{ij}(u)\otimes \mathcal E_{ij}, \quad \text{where} \quad t_{ij}(u):=\de_{ij} +\sum_{m=1}^\infty t^{(m)}_{ij} u^{-m}, 
$$
with the understanding that 
$$
\T_1(u)\in Y_d\otimes \End\C^d\otimes 1, \quad  \T_2(u)\in Y_d\otimes 1\otimes \End\C^d.
$$

There is an action of the additive group $\C$ on the Yangian $Y_d$ by automorphisms, defined by
\begin{equation}\label{eq2.D}
\T(u)\mapsto \T(u+c), \qquad c\in\C.
\end{equation}
This happens because in the defining relation \eqref{eq2.E}, the argument $u-v$ of the $R$-matrix is not affected by the simultaneous shift $u\mapsto u+c$, $v\mapsto v+c$. The shift automorphisms are used in the representation theory of the Yangians.  

For more detail, see Molev-Nazarov-Olshanski \cite{MNO}, Molev \cite{M}, Nazarov \cite{N2}.

\subsection{The algebras $A_d$ and $A_\infty$}

Let us specify the projections $\pi_{N,N-1}$ mentioned in \eqref{eq1.A}.

Let $I^+(N)\subset U(\gl(N,\C))$ be the left ideal generated by the elements $E_{iN}$ with $1\le i\le N$, and let $I^-(N)$ be the right ideal generated by the elements $E_{Ni}$ with $1\le i\le N$. Consider the centralizer
$$
A_{N-1}(N)=U(\gl(N,\C))^{E_{NN}}.
$$
We have 
\begin{equation}\label{eq2.F}
A_{N-1}(N)\cap I^+(N)=A_{N-1}(N)\cap I^-(N)=: L(N),
\end{equation}
so that $L(N)$ is a two-sided ideal in $A_{N-1}(N)$. Furthermore, the following direct sum decomposition holds:
$$
A_{N-1}(N)=U(\gl(N-1,\C))\oplus L(N).
$$

Assuming $0\le d\le N-1$, we take the projection $A_{N-1}(N)\to U(\gl(N-1,\C))$ along $L(N)$ and then restrict it to $A_d(N)\subseteq A_{N-1}(N)$. This gives us the desired homomorphism $\pi_{N,N-1}: A_d(N)\to A_d(N-1)$.

It allows us to define the limit algebra $A_d:=\varprojlim(A_d(N), \pi_{N,N-1})$ and then the algebra $A_\infty:=\varinjlim A_d$. This is the initial \emph{centralizer construction}. 

In the next theorem we use the algebra $\Sym^*$ of shifted symmetric functions \cite{OkOls}. It can be defined as the subalgebra of formal power series in countably many variables $x_1,x_2,\dots$, generated by the series 
$$
\sum_{i=1}^\infty [(x_i-i)^k-(-i)^k], \quad k=1,2,3,\dots\,.
$$
$\Sym^*$ is a filtered algebra with the property that the associated  graded algebra $\gr(\Sym^*)$ is canonically isomorphic to the algebra $\Sym$ of symmetric functions. For more details, see \cite{OkOls}.\footnote{In \cite{OkOls}, as well as in some other  works, the algebra $\Sym^*$ was denoted by $\Lambda^*$, by analogy with Macdonald's notation $\Lambda$ for the algebra of symmetric functions.}

\begin{theorem}\label{thm2.A}
{\rm(i)} For any $d=0,1,2,\dots$, the morphisms $\pi_{N,N-1}$ are surjective. 

{\rm(ii)} The algebra $A_0$ is a free commutative algebra with countably many generators. It is naturally isomorphic to the algebra\/ $\Sym^*$.  

{\rm(iii)} For each $d\ge1$, the algebra $A_d$ is isomorphic to the tensor product $A_0\otimes Y_d$, where, as above, $Y_d$ denotes the Yangian of\/ $\gl(d,\C)$. 

{\rm(iv)} The algebra $A_\infty$ is isomorphic to the tensor product $A_0\otimes Y_\infty$, where $Y_\infty:=\varinjlim Y_d$ is the union of the Yangians $Y_d$.
\end{theorem}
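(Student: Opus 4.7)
The overall strategy is to exhibit two explicit families of stable elements in the finite-level centralizers $A_d(N)$: a family of Yangian-type generators $t_{ij}^{(m)}$ for $1\le i,j\le d$, $m\ge 1$, and a family of shifted-symmetric central generators $c_k$ for $k\ge 1$; then to show that together they freely generate the projective limit, with the Yangian subalgebra commuting with the central one. All four items of the theorem will fall out of this construction.

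For statement (ii) I invoke the classical Harish-Chandra isomorphism identifying $A_0(N)=Z(U(\gl(N,\C)))$ with the algebra of shifted symmetric polynomials in $N$ variables. The main point to check is that the projection $\pi_{N,N-1}\colon A_0(N)\to A_0(N-1)$ corresponds, under this identification, to the natural restriction map sending (after the standard shift) the last variable to $0$. This compatibility can be verified on any convenient system of generators, for instance the shifted power sums or the shifted Schur functions of \cite{OkOls}, which are visibly stable. Passing to the limit yields $A_0\cong \Sym^*$, and freeness of the generators follows from freeness at each finite level combined with filtration-preservation of $\pi_{N,N-1}$.

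For statements (i) and (iii) I adopt the MNO-style definition
\begin{equation*}
s_{ij}^{(m)}(N):=\sum_{a_1,\dots,a_{m-1}=d+1}^{N}E_{i,a_1}E_{a_1,a_2}\cdots E_{a_{m-1},j},\qquad 1\le i,j\le d,
\end{equation*}
together with a triangular correction (obtained by a shift of the spectral parameter, or equivalently by subtracting a universal combination of lower-level terms built from the central generators) that turns the corrected elements $t_{ij}^{(m)}(N)$ into a sequence stable under $\pi_{N,N-1}$. The program then splits into four steps: (a) verify $s_{ij}^{(m)}(N)\in A_d(N)$ by a direct commutator calculation with $E_{ab}$, $d+1\le a,b\le N$; (b) show the corrected elements project compatibly and therefore define $t_{ij}^{(m)}\in A_d$; (c) prove that the $t_{ij}^{(m)}$ satisfy the defining relations \eqref{eq2.C} of the Yangian $Y_d$; (d) observe that $A_0\subset A_d$ commutes with every Yangian generator, which is trivial at the finite level because central elements commute with everything in $U(\gl(N,\C))$. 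Surjectivity of $\pi_{N,N-1}$ in (i) is then a byproduct: the level-$(N-1)$ stable generators are explicit images of the level-$N$ stable generators, so polynomials in them lift tautologically. To conclude (iii) one checks that the resulting algebra homomorphism $A_0\otimes Y_d\to A_d$ is bijective by passing to the associated graded algebras, where both sides become polynomial algebras whose generators are matched by classical invariant theory (Weyl's first fundamental theorem for $GL(N-d,\C)$ acting on several matrix blocks, in the limit as $N\to\infty$).

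Statement (iv) is a formal consequence: the inclusions $\gl_{d+1}(N,\C)\subset\gl_d(N,\C)$ induce inclusions $A_{d+1}(N)\hookrightarrow A_d(N)$ compatible with all the $\pi_{N,N-1}$ and with the tensor decompositions of (iii), restricting to the standard embedding $Y_d\hookrightarrow Y_{d+1}$ that sends $t_{ij}^{(m)}$ to $t_{ij}^{(m)}$ and to the identity on $A_0$; passing to the direct limit gives the claim. The main obstacle in the whole plan is step (c), the exact Yangian relations for the elements $t_{ij}^{(m)}(N)$. In the classical setting this is handled elegantly via the $RTT$ formalism: the matrix $T(u;N)$ assembled from the $t_{ij}^{(m)}(N)$ is shown to satisfy a ternary $R$-matrix equation inside $U(\gl(N,\C))$, which then passes to the limit \eqref{eq2.E}. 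This is precisely the feature that will be unavailable in the $L\ge 2$ setting pursued in the rest of the paper, and the reason that the Stability Lemma \ref{lemma7.A} has to step in as a substitute there.
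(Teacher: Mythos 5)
Your proposal is correct and follows essentially the same route as the proof the paper cites ([Ols-Limits, ch.~2], [M, ch.~8]) and sketches in section \ref{sect2.3}: explicit stable liftings of the Yangian and central generators (yours via sums restricted to indices $>d$ plus a triangular correction, the paper's via the full-sum series $\big(1-\mathbb E(N)/(u+N)\big)^{-1}$, a cosmetic difference of normalization), the Harish-Chandra/shifted-symmetric identification of $A_0$, the RTT mechanism for the Yangian relations, and an associated-graded/first-fundamental-theorem argument for the splitting and the surjectivity of $\pi_{N,N-1}$. One small slip to fix in (iv): since $\gl_{d+1}(N,\C)\subset\gl_d(N,\C)$, the induced inclusion of centralizers is $A_d(N)\hookrightarrow A_{d+1}(N)$ (the centralizer of the smaller subalgebra is the larger one), which is the direction your direct-limit argument actually uses.
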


A detailed proof is contained in Olshanski \cite[ch. 2]{Ols-Limits} and in Molev's book  \cite[ch. 8]{M}. A slightly more general result is given in Molev-Olshanski \cite[sect. 2]{MO-2000}. See also the early publications \cite{Ols-Doklady}, \cite{Ols-LOMI}.

\subsection{Generators of the algebras $A_d$}\label{sect2.3}

For $N=1,2,\dots$, we consider the following $N\times N$ matrix whose entries are the matrix units:
\begin{equation*}
\mathbb E(N):=\begin{bmatrix} E_{11} & \cdots & E_{1N}\\
\vdots & \vdots &\vdots \\
E_{N1} & \cdots & E_{NN}
\end{bmatrix}.
\end{equation*}
In other words, the $(i,j)$th entry of $\mathbb E(N)$ is the matrix unit $E_{ij}$. We interpret theses entries as elements of $U(\gl(N,\C))$, so that $\mathbb E(N)$ itself is an element of the algebra $U(\gl(N,\C))\otimes \End \C^N$.\footnote{The matrix $\mathbb E(N)$ appears in various situations. Examples are the Capelli identity (see e.g. \cite[\S2.11]{MNO}), the Perelomov--Popov formula (see e.g. \cite{PP}), the characteristic identities (see e.g. \cite{Go}, \cite{IWG}).}

Next, we define a family of elements of $U(\gl(N,\C))$ using generating series with a formal variable $u$:
\begin{gather*}
N+\sum_{m=1}^\infty t^{(m)}(N)u^{-m}=\tr\bigg(\bigg(1-\frac{\mathbb E(N)}{u+N}\bigg)^{-1}\bigg),  \label{eq2.A}  \\
\de_{ij}+\sum_{m=1}^\infty t^{(m)}_{ij}(N)u^{-m}=\bigg(\bigg(1-\frac{\mathbb E(N)}{u+N}\bigg)^{-1}\bigg)_{ij} ,  \quad i,j\le N.
\end{gather*}

The elements $t^{(m)}(N)$, $t^{(m)}_{ij}(N)$ have degree $m$ and 
$$
t^{(m)}(N)\in A_0(N), \qquad  t^{(m)}_{ij}(N)\in A_d(N) \quad \text{provided that  $i,j\le d\le N$}.
$$
Next, they satisfy the consistency relations
$$
\pi_{N,N-1}\big(t^{(m)}(N)\big)=t^{(m)}(N-1); \qquad \pi_{N,N-1}\big(t^{(m)}_{ij}(N)\big)=t^{(m)}_{ij}(N-1) \quad \text{(for $N>i,j$).}
$$
Therefore, they give rise to elements of the algebra $A_\infty$, which we denote by $t^{(m)}$ and $t^{(m)}_{ij}$, respectively:
$$
t^{(m)}=\{t^{(m)}(N): N=1,2,\dots\}, \qquad t^{(m)}_{ij}=\{t^{(m)}_{ij}(N): N\ge\max(i,j)\}.
$$

The elements $t^{(m)}$ are algebraically independent generators of the commutative subalgebra $A_0$. Under the isomorphism  $A_0\to \Sym^*$, one has 
$$
\sum_{m=1}^\infty t^{(m)} u^{-m} \; \to\;  u-u\prod_{i=1}^\infty \frac{u-x_i+i-1}{u+i-1}\cdot\frac{u+i}{u-x_i+i},
$$
see \cite[Remark 2.1.20]{Ols-Limits}, \cite[Prop. 8.2.3]{M}.

The elements $t^{(m)}_{ij}$ satisfy the commutation relations \eqref{eq2.C}. Under the restriction $1\le i,j\le d$, these elements generate the subalgebra isomorphic to $Y_d$, so our notation here is consistent with the notation of section \ref{sect2.1}.

\section{The algebras $A_\dd$, $P_\dd$, $A_{\infty,L}$, and $P_{\infty,L}$}\label{sect3}

\subsection{The algebras $A_\dd$}\label{sect3.1}
Our aim is to describe in detail the construction of the algebra $A_\dd$, sketched in section \ref{sect1.2}. We keep to the notation of that section.  Below we assume $L\in\Z_{\ge2}$ and $d\in\Z_{\ge0}$.  

The first step is to specify the projections $\pi_{N,N-1}$ from \eqref{eq1.A1}. This is done in analogy with the case $L=1$ (cf.  \cite[ch. 2]{Ols-Limits} or \cite[ch. 8]{M}).  

Namely,  let $I^+(N)\subset U(\gl(N,\C)^{\oplus \LL})$ be the left ideal generated by the elements $E_{iN\mid \w}$ with $1\le i\le N$ and $\w=1,\dots,\LL$, and  $I^-(N)$ be the right ideal generated by the elements $E_{Ni\mid \w}$ with $1\le i\le N$ and $\w=1,\dots,\LL$. From the definition \eqref{eq1.I} of the centralizers $A_\dd(N)$ we have
$$
A_{N-1, \LL}(N)=U(\gl(N,\C)^{\oplus L})^{\diag E_{NN}},
$$
where
$$
\diag E_{NN}=\sum_{\w=1}^L E_{NN\mid \w}.
$$

\begin{lemma}[cf. \eqref{eq2.F}]\label{lemma3.B}
We have
\begin{equation}\label{eq3.D}
A_{N-1,\LL}(N)\cap I^+(N)=A_{N-1,\LL}(N)\cap I^-(N).
\end{equation}
\end{lemma}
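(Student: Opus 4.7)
The plan is to use two PBW-type direct sum decompositions of $U:=U(\gl(N,\C)^{\oplus L})$ and then collapse them via a weight argument. I would split the standard basis of $\gl(N,\C)^{\oplus L}$ into four parts,
$$
\mathcal{A}=\{E_{ij\mid\alpha}:i,j<N\},\ \mathcal{B}=\{E_{NN\mid\alpha}\},\ \mathcal{C}=\{E_{iN\mid\alpha}:i<N\},\ \mathcal{D}=\{E_{Ni\mid\alpha}:i<N\},
$$
with $\alpha$ ranging over $\{1,\dots,L\}$ in each part, and set $\mathfrak{p}^-=\mathrm{span}(\mathcal{A}\cup\mathcal{D})$, $\mathfrak{p}^+=\mathrm{span}(\mathcal{A}\cup\mathcal{C})$, $\bar{\mathfrak{n}}^+_L=\mathrm{span}(\mathcal{B}\cup\mathcal{C})$, $\bar{\mathfrak{n}}^-_L=\mathrm{span}(\mathcal{B}\cup\mathcal{D})$. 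A direct commutator calculation shows that all four of these subspaces are Lie subalgebras of $\gl(N,\C)^{\oplus L}$ and that we have vector space decompositions $\gl(N,\C)^{\oplus L}=\mathfrak{p}^-\oplus\bar{\mathfrak{n}}^+_L=\mathfrak{p}^+\oplus\bar{\mathfrak{n}}^-_L$.

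By the PBW theorem, multiplication yields linear isomorphisms $U(\mathfrak{p}^-)\otimes U(\bar{\mathfrak{n}}^+_L)\xrightarrow{\sim}U$ and $U(\mathfrak{p}^+)\otimes U(\bar{\mathfrak{n}}^-_L)\xrightarrow{\sim}U$. Since $I^+(N)=U\cdot\bar{\mathfrak{n}}^+_L$ and the augmentation ideal of $U(\bar{\mathfrak{n}}^+_L)$ coincides with $\bar{\mathfrak{n}}^+_L\cdot U(\bar{\mathfrak{n}}^+_L)$, the first isomorphism gives the direct sum decomposition $U=U(\mathfrak{p}^-)\oplus I^+(N)$, and symmetrically $U=U(\mathfrak{p}^+)\oplus I^-(N)$. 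Let $\pi^+$ and $\pi^-$ denote the corresponding projections.

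The decisive step is a weight analysis under the adjoint action of $H:=\diag E_{NN}=\sum_{\alpha=1}^L E_{NN\mid\alpha}$. One checks that the generators in $\mathcal{D}$ have weight $+1$, those in $\mathcal{C}$ have weight $-1$, and those in $\mathcal{A}\cup\mathcal{B}$ have weight $0$. Choosing a PBW ordering $\mathcal{D}<\mathcal{A}<\mathcal{B}<\mathcal{C}$, a sorted monomial $\mathbf{D}\mathbf{A}\mathbf{B}\mathbf{C}$ carries weight $|\mathbf{D}|-|\mathbf{C}|$, and the centralizer $A_{N-1,L}(N)=U^H$ is exactly the weight-zero subspace. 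Thus for $u\in A_{N-1,L}(N)$ only balanced PBW monomials ($|\mathbf{D}|=|\mathbf{C}|$) occur. The projection $\pi^+(u)$ selects the monomials with $\mathbf{B}=\mathbf{C}=\emptyset$; once $\mathbf{C}$ is forced empty, the balance condition kills $\mathbf{D}$ as well, so $\pi^+(u)$ reduces to the sum of the pure-$\mathcal{A}$ monomials appearing in $u$. The symmetric argument shows $\pi^-(u)$ reduces to the same sum. Hence $\pi^+(u)=\pi^-(u)$ for every $u\in A_{N-1,L}(N)$, and the chain
$$
u\in I^+(N)\iff\pi^+(u)=0\iff\pi^-(u)=0\iff u\in I^-(N)
$$
proves the lemma. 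The main obstacle is only the initial PBW bookkeeping—verifying that $\bar{\mathfrak{n}}^\pm_L$ are Lie subalgebras and that the two direct sum decompositions hold—after which the weight argument is essentially automatic.
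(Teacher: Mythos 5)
Your route is essentially the paper's own proof (the same four groups of matrix units, the same weights under $\operatorname{ad}(\diag E_{NN})$, the same PBW bookkeeping with the weight-zero/balance condition characterizing $A_{N-1,L}(N)$), and the overall structure is sound; but two points, as written, need repair. First, for the $I^-$ side you quote the factorization $U(\mathfrak{p}^+)\otimes U(\bar{\mathfrak{n}}^-_L)\xrightarrow{\ \sim\ }U$, i.e.\ with the $\bar{\mathfrak{n}}^-_L$-factors on the right. With that order the complement of $U(\mathfrak{p}^+)$ it produces is $U(\mathfrak{p}^+)\cdot\bar{\mathfrak{n}}^-_L U(\bar{\mathfrak{n}}^-_L)$, which is \emph{not} the right ideal $I^-(N)$: for instance $E_{1N\mid\alpha}E_{N1\mid\alpha}$ lies in that summand, yet $E_{1N\mid\alpha}E_{N1\mid\alpha}=E_{N1\mid\alpha}E_{1N\mid\alpha}+E_{11\mid\alpha}-E_{NN\mid\alpha}$ is congruent to $E_{11\mid\alpha}\neq0$ modulo $I^-(N)$. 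To read off $U=U(\mathfrak{p}^+)\oplus I^-(N)$ you must use the opposite order, $U(\bar{\mathfrak{n}}^-_L)\otimes U(\mathfrak{p}^+)\xrightarrow{\ \sim\ }U$ (for the left ideal your order was the correct one).

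Second, and this is the only place where a real idea is glossed over: the sentence ``the symmetric argument shows $\pi^-(u)$ reduces to the same sum'' is not automatic. With your single ordering $\mathcal{D}<\mathcal{A}<\mathcal{B}<\mathcal{C}$, a sorted monomial $\mathbf{A}\mathbf{B}\mathbf{C}$ with $\mathbf{B}\neq\emptyset$ has its $E_{NN\mid\alpha}$-factors in the middle, so it is not visibly a member of the \emph{right} ideal $I^-(N)$; hence it is not clear that $\pi^-$ kills exactly the monomials with $\mathbf{D}\neq\emptyset$ or $\mathbf{B}\neq\emptyset$. If instead you recompute $\pi^-$ in a mirrored ordering with $\mathcal{B}\cup\mathcal{D}$ on the left, you obtain the pure-$\mathcal{A}$ part of a \emph{different} PBW expansion, and you still owe an argument that it coincides with the pure-$\mathcal{A}$ part of the first expansion. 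Both difficulties are removed by one observation, which is precisely the remark the paper inserts at this point: every element of $\mathcal{A}$ commutes with every element of $\mathcal{B}$, so $\mathbf{A}\mathbf{B}\mathbf{C}=\mathbf{B}\mathbf{A}\mathbf{C}\in\bar{\mathfrak{n}}^-_L U=I^-(N)$, and the two PBW bases (hence the two pure-$\mathcal{A}$ parts) coincide. With that remark added and the factor order fixed, your proof is correct and agrees with the paper's.
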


\begin{proof}
Divide the basis elements $E_{ij\mid\al}$ of the Lie algebra $\gl(N,\C)^\oL$ into four groups depending on the following conditions on $i$ and $j$:

\begin{itemize} 

\item[(i)] in the first group, $i=N$ and $j\le N-1$;

\item[(ii)] in the second group, $1\le i, j\le N-1$;

\item[(iii)] in the third group, $i=j=N$;

\item[(iv)] in the fourth group, $1\le i\le  N-1$ and $j=N$. 

\end{itemize}

All basis vectors are eigenvectors of the operator $\operatorname{ad}(\diag E_{NN})$, and the corresponding eigenvalues (or ``weights'') are equal to $1$ in the first group, $0$ in the second and third groups, and $-1$ in the fourth group.

Let us order the basis elements in such a way that the elements of the first group come first, those of the second and third groups come next, and the elements of the fourth group are in the end. An important remark is that the elements from the second group commute with the elements of the third group, so their relative order is inessential for us.

By the Poincar\'e-Birkhoff-Witt theorem, any nonzero element $X\in U(\gl(N,\C)^\oL)$ is written uniquely as a linear combination of ordered monomials composed from the basis elements, with nonzero coefficients. Then the condition $X\in A_{N-1}(N)$ exactly means that each of these monomials has total weight $0$. This in turn means that if the last element of the monomial is in the fourth group, then the first eIement is in the first group, and vice versa.  

Next, by the definition of $I^+(N)$, one has $X\in I^+(N)$ if and only if each of the monomials either ends with an element from the fourth group or involves an element from the third group. But in view of the above this is equivalent to $X\in I^-(N)$. 
\end{proof}

Denote the subspace \eqref{eq3.D} by $L(N)$. Lemma \ref{lemma3.B} implies that $L(N)$ is a two-sided ideal in $A_{N-1, L}(N)$, and from the proof of the lemma it is seen that the following direct sum decomposition holds
\begin{equation}\label{eq3.E}
A_{N-1}(N)=U(\gl(N-1,\C)^\oL)\oplus L(N).
\end{equation}
Then we define $\pi_{N,N-1}$ as the projection on the first summand. Since $L(N)$ is a two-sided ideal, $\pi_{N,N-1}$ is an algebra morphism. Further, it preserves the filtration: to see this, observe that in the notation from the proof of Lemma \ref{lemma3.B}, $\pi_{N,N-1}(X)$ is obtained by keeping only those ordered monomials that are composed solely from the basis elements of the second group, and removing all other monomials. 

The second step is given by the following lemma.

\begin{lemma}\label{lemma3.C}
The projection $\pi_{N,N-1}$ is equivariant with respect to the diagonal action of the group $GL(N-1,\C)$.
\end{lemma}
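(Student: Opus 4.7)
The plan is to verify that the direct sum decomposition \eqref{eq3.E} is actually a decomposition of $GL(N-1,\C)$-modules; equivariance of $\pi_{N,N-1}$, the projection onto the first summand along the second, then follows at once.

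First I would establish that the ambient centralizer $A_{N-1,L}(N)$ is itself $GL(N-1,\C)$-stable. Under the natural embedding $GL(N-1,\C)\hookrightarrow GL(N,\C)$, $g\mapsto\diag(g,1)$, every such $g$ commutes with $E_{NN}\in\gl(N,\C)$; consequently the diagonal action on $\gl(N,\C)^\oL$ fixes the element $\diag E_{NN}$, and conjugation therefore preserves its centralizer in $U(\gl(N,\C)^\oL)$. Stability of the summand $U(\gl(N-1,\C)^\oL)$ in \eqref{eq3.E} is immediate, since the embedded $GL(N-1,\C)$ acts on $\gl(N-1,\C)^\oL$ by conjugation and this extends to the enveloping algebra.

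The main step is stability of the complementary summand $L(N)$. For this it suffices to show that each of the one-sided ideals $I^\pm(N)$ is $GL(N-1,\C)$-invariant. Focusing on $I^+(N)$, a short matrix calculation with $g=\diag(g_0,1)$ yields
\[
g\cdot E_{iN\mid\al}=\sum_{k=1}^{N-1}(g_0)_{ki}\,E_{kN\mid\al}\quad(i\le N-1),\qquad g\cdot E_{NN\mid\al}=E_{NN\mid\al},
\]
so the generators of $I^+(N)$ are sent into their own linear span. Because the action is by algebra automorphisms of $U(\gl(N,\C)^\oL)$, the full left ideal generated by these elements is preserved. The argument for $I^-(N)$ is entirely symmetric, with $E_{iN\mid\al}$ replaced by $E_{Ni\mid\al}$. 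Intersecting with the stable subspace $A_{N-1,L}(N)$ shows $L(N)$ is preserved, completing the proof.

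I do not anticipate any genuine obstacle; the proof is essentially matrix bookkeeping. The one point requiring care is the correct identification of the embedding $GL(N-1,\C)\hookrightarrow GL(N,\C)$ as the stabilizer of the last basis vector and the last coordinate hyperplane --- precisely the embedding under which both summands of \eqref{eq3.E} appear as invariant subspaces.
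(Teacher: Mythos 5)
Your proof is correct and follows essentially the same route as the paper: both arguments amount to checking that the two summands of the decomposition \eqref{eq3.E} are stable under the diagonal conjugation action of $GL(N-1,\C)$, so that the projection along $L(N)$ is automatically equivariant. The paper phrases this by noting that the action preserves the span of each of the four groups of basis elements from the proof of Lemma \ref{lemma3.B}, while you verify invariance of $A_{N-1,L}(N)$, of $U(\gl(N-1,\C)^{\oplus L})$, and of the ideals $I^{\pm}(N)$ directly; these are the same elementary observation in slightly different packaging.
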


\begin{proof}
This action preserves the linear span of the basis elements from each of the four groups (i) -- (iv). Therefore, it preserves the 
decomposition \eqref{eq3.E}. 
\end{proof} 

The third step is

\begin{lemma}\label{lemma3.D}
We have $\pi_{N,N-1}(A_\dd(N))\subseteq A_\dd(N-1)$, for any $d\in\{0,\dots,N-1\}$. 
\end{lemma}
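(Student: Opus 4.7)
The plan is to combine the equivariance of $\pi_{N,N-1}$ from Lemma \ref{lemma3.C} with the obvious inclusion of subgroups $GL_d(N-1,\C)\subset GL_d(N,\C)$. Concretely: we first observe that $GL_d(N,\C)\simeq GL(N-d,\C)$, realized as block matrices $\begin{bmatrix} I_d & 0 \\ 0 & g\end{bmatrix}$ with $g\in GL(N-d,\C)$, contains as a subgroup $GL_d(N-1,\C)$, realized as $\begin{bmatrix} I_d & 0 & 0 \\ 0 & h & 0 \\ 0 & 0 & 1\end{bmatrix}$ with $h\in GL(N-1-d,\C)$. Moreover, this copy of $GL_d(N-1,\C)$ lies inside $GL(N-1,\C)\subset GL(N,\C)$ (the latter being the standard embedding that fixes $e_N$).

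Now take any $X\in A_\dd(N)$. Since $X$ is $\diag GL_d(N,\C)$-invariant, restricting to the subgroup $\diag GL_d(N-1,\C)$ gives that $X$ is $\diag GL_d(N-1,\C)$-invariant. By Lemma \ref{lemma3.C}, $\pi_{N,N-1}$ is $\diag GL(N-1,\C)$-equivariant, so in particular it is equivariant with respect to the subgroup $\diag GL_d(N-1,\C)$. Therefore $\pi_{N,N-1}(X)$ is again $\diag GL_d(N-1,\C)$-invariant. Since $\pi_{N,N-1}(X)$ lies in $U(\gl(N-1,\C)^{\oplus L})$ by construction, this means precisely that $\pi_{N,N-1}(X)\in A_\dd(N-1)$.

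The only point where one must be slightly careful is to verify that $A_\dd(N)$ is contained in the domain $A_{N-1,L}(N)$ of $\pi_{N,N-1}$. This is automatic: $d\le N-1$ gives $\gl_{N-1}(N,\C)\subset \gl_d(N,\C)$, hence $A_\dd(N)\subseteq A_{N-1,L}(N)$, so the projection is defined on $A_\dd(N)$.

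I do not anticipate any real obstacle, since both ingredients (the inclusion of subgroups and the equivariance lemma) are already in place; the argument is a direct transcription of the one used for $L=1$ in \cite[ch.~2]{Ols-Limits}.
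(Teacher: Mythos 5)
Your proposal is correct and follows essentially the same route as the paper: pass to invariants under the groups $GL_d(N,\C)$ and $GL_d(N-1,\C)$, use the inclusions $GL_d(N-1,\C)\subset GL_d(N,\C)$ and $GL_d(N-1,\C)\subset GL(N-1,\C)$, and invoke the equivariance of $\pi_{N,N-1}$ from Lemma \ref{lemma3.C}. Your additional remark that $A_\dd(N)\subseteq A_{N-1,\LL}(N)$ (so that $\pi_{N,N-1}$ is indeed defined on $A_\dd(N)$) is a point the paper leaves implicit, but it is verified correctly and does not change the argument.
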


\begin{proof}
It is convenient to treat the subalgebras 
$$
A_\dd(N)\subset U(\gl(N,\C)^\oL)\quad \text{and} \quad A_\dd(N-1)\subset U(\gl(N-1,\C)^\oL)
$$
as the subalgebras of invariants with respect to the diagonal action of the groups $GL_d(N,\C)$ and $GL_d(N-1,\C)$, respectively. 
Since $GL_d(N-1,\C)$ is a subgroup of $GL_d(N,\C)$, it suffices to check that the projection $\pi_{N,N-1}$ commutes with the action of the smaller group $GL_d(N-1,\C)$. Since it is contained in $GL(N-1,\C)$, this follows from Lemma \ref{lemma3.C}.
\end{proof}

Lemmas \ref{lemma3.B} and \ref{lemma3.D} show that the centralizer construction works, and it yields the projective limit filtered algebra
\begin{equation}\label{eq3.B}
A_\dd:=\varprojlim(A_\dd(N),\pi_{N,N-1}).
\end{equation}

\subsection{The algebras $P_\dd$}

Let us consider invariants in symmetric algebras instead of enveloping algebras. The definition \eqref{eq1.I} is replaced by   
\begin{equation}\label{eq3.A}
P_\dd(N):=S(\gl(N,\C)^{\oplus L})^{\diag \gl_d(N,\C)}, \quad 0\le d<N,
\end{equation}
where the invariants are taken with respect to the adjoint action of the Lie algebra $\diag \gl_d(N,\C)$. 

Because this action preserves the grading of the symmetric algebra $S(\gl(N,\C)^{\oplus L})$, the subalgebra $P_\dd(N)$ is also a graded algebra. The counterparts of the left ideal $I^+(N)$ and the right ideal $I^-(N)$ are the graded ideals $\mathcal I^+(N)$ and $\mathcal I^-(N)$ in $P_\dd(N)$, whose definition is similar: $\mathcal I^+(N)$ is generated by the elements $E_{iN\mid \w}$ and $\mathcal I^-(N)$ is generated by the elements $E_{Ni\mid \w}$, where $1\le i\le N$ and $\al\in[L]$. 

We can repeat the construction of the previous subsection, with $I^\pm(N)$ being replaced by $\mathcal I^\pm(N)$. Namely, the counterpart of \eqref{eq3.D}  is the relation
\begin{equation*}
P_{N-1,\LL}(N)\cap \mathcal I^+(N)=P_{N-1,\LL}(N)\cap \mathcal I^-(N)=:\mathcal L(N). 
\end{equation*}
This leads to the direct sum decomposition, similar to \eqref{eq3.E}:
\begin{equation*}
P_{N-1}(N)=S(\gl(N-1,\C)^\oL)\oplus \mathcal L(N).
\end{equation*}
Then we take the projection onto the first summand, which we denote again by $\pi_{N,N-1}$. 

In this way we obtain a commutative algebra  
\begin{equation}\label{eq3.B1}
P_\dd:=\varprojlim(P_\dd(N),\pi_{N,N-1}),
\end{equation}
where the limit is now understood in the category of graded commutative algebras, so that $P_\dd$ is a graded commutative algebra. 

\begin{lemma}\label{lemma3.A}
Let $\gr A_\dd(N)$ be the graded algebra associated with the filtered algebra $A_\dd(N)$. For each $N\ge d$, there is a natural isomorphism of graded algebras $\gr A_\dd(N)\to P_\dd(N)$.
\end{lemma}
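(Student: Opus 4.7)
The plan is to deduce this from the standard PBW isomorphism combined with the reductivity of $GL_d(N,\C)$, which ensures that taking invariants commutes with passing to the associated graded.

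First, I would recall that the Poincar\'e-Birkhoff-Witt theorem gives a canonical graded algebra isomorphism $\gr U(\g)\xrightarrow{\sim} S(\g)$ for any Lie algebra $\g$, natural in $\g$. Applied to $\g=\gl(N,\C)^{\oplus L}$, this identification is equivariant with respect to the adjoint action of $GL(N,\C)$, and in particular of the subgroup $GL_d(N,\C)\subset GL(N,\C)$. Since the filtration on $A_\dd(N)$ is by definition the restriction of the standard filtration on $U(\gl(N,\C)^{\oplus L})$, we get an induced injective graded algebra map $\gr A_\dd(N)\hookrightarrow \gr U(\gl(N,\C)^{\oplus L})\cong S(\gl(N,\C)^{\oplus L})$, and it is enough to identify its image with the invariant subalgebra $P_\dd(N)$.

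The main step is the compatibility between invariants and associated graded. Write $G:=GL_d(N,\C)$, which is isomorphic to $GL(N-d,\C)$, hence reductive. The filtered pieces $U^{\le m}:=U(\gl(N,\C)^{\oplus L})^{\le m}$ are finite-dimensional rational $G$-modules, and the short exact sequence
\begin{equation*}
0\to U^{\le m-1}\to U^{\le m}\to S^m(\gl(N,\C)^{\oplus L})\to 0
\end{equation*}
is a sequence of $G$-modules (here I use the PBW identification of the quotient). Because $G$ is reductive, the functor of $G$-invariants on rational finite-dimensional $G$-modules is exact, so applying it yields
\begin{equation*}
0\to (U^{\le m-1})^G\to (U^{\le m})^G\to \bigl(S^m(\gl(N,\C)^{\oplus L})\bigr)^G\to 0.
\end{equation*}
The left-hand terms are the filtered pieces $A_\dd(N)^{\le m-1}$ and $A_\dd(N)^{\le m}$, while the rightmost term is the $m$-th homogeneous component of $P_\dd(N)$. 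Hence the $m$-th graded component of $A_\dd(N)$ is naturally identified with the $m$-th component of $P_\dd(N)$.

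Assembling these identifications over all $m$ yields a graded vector space isomorphism $\gr A_\dd(N)\to P_\dd(N)$, and it is an algebra map because the PBW isomorphism is one; naturality in $N$ is automatic from the construction. The main technical point is thus the exactness of $G$-invariants, which in turn rests on the reductivity of $GL_d(N,\C)$; no further delicate computation is needed. Note that the invariants could equivalently be taken with respect to the Lie algebra $\diag\gl_d(N,\C)$, since $GL_d(N,\C)$ is connected, so the lemma as stated follows.
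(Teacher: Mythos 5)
Your argument is correct and follows essentially the same route as the paper's own (very brief) proof: the canonical PBW isomorphism $\gr U(\gl(N,\C)^{\oplus L})\to S(\gl(N,\C)^{\oplus L})$ combined with complete reducibility of the action of $\diag\gl_d(N,\C)$ (equivalently, reductivity of $GL_d(N,\C)$), which is exactly what makes taking invariants exact on the filtered pieces. You simply spell out the exactness argument via the short exact sequences, which the paper leaves implicit.
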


\begin{proof}
We use the canonical isomorphism $\gr U(\gl(N,\C)^{\oplus L}) \to S(\gl(N,\C)^{\oplus L})$ and the fact that the adjoint representation of $\diag \gl_d(N,\C)$ on $U(\gl(N,\C)^{\oplus L})$ is completely reducible and consistent with the filtration.
\end{proof} 

\begin{proposition}\label{prop3.A}
The isomorphisms $\gr A_\dd(N)\to P_\dd(N)$ from Lemma \ref{lemma3.A} give rise to an algebra isomorphism $\gr A_\dd\to P_\dd$.
\end{proposition}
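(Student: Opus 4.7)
The plan is to use Lemma \ref{lemma3.A} to produce a candidate map $\phi: \gr A_\dd \to P_\dd$ and verify it is an isomorphism degree by degree. The key observation driving the argument is that the filtered projection $\pi_{N,N-1}: A_\dd(N) \to A_\dd(N-1)$ and the graded projection $\pi_{N,N-1}: P_\dd(N) \to P_\dd(N-1)$ are defined by the same prescription (dropping the ordered monomials that involve the $N$-th row or column), so the isomorphisms $\varphi_N : \gr A_\dd(N) \xrightarrow{\sim} P_\dd(N)$ of Lemma \ref{lemma3.A} constitute a natural transformation between the two inverse systems.

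To construct $\phi$, I would first fix the filtration on the limit algebra. Because each $\pi_{N,N-1}$ preserves filtration, the spaces $F^n A_\dd := \varprojlim_N F^n A_\dd(N)$ form an exhaustive filtration on $A_\dd$ with $(\gr A_\dd)^n = F^n A_\dd / F^{n-1} A_\dd$. The canonical composition $F^n A_\dd \to F^n A_\dd(N) \to P_\dd(N)^n$ kills $F^{n-1} A_\dd$, hence factors through $(\gr A_\dd)^n$; the naturality of $\varphi_N$ then assembles these factorizations, as $N$ varies, into a graded algebra morphism $\phi: \gr A_\dd \to P_\dd$. Multiplicativity of $\phi$ follows from that of each $\varphi_N$ together with the compatibility of the multiplication with the projective limit.

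Injectivity of $\phi$ in each degree is immediate: if $x \in F^n A_\dd$ vanishes in every $P_\dd(N)^n$, each component $x_N$ lies in $F^{n-1} A_\dd(N)$, whence $x \in F^{n-1} A_\dd$ and its class in $(\gr A_\dd)^n$ is zero. Surjectivity in each degree comes for free from the Mittag--Leffler lemma. Indeed, the short exact sequences of inverse systems
\[
0 \to F^{n-1} A_\dd(N) \to F^n A_\dd(N) \to P_\dd(N)^n \to 0
\]
have finite-dimensional terms, so the kernel system $\{F^{n-1} A_\dd(N)\}_N$ is automatically Mittag--Leffler and $\varprojlim{}^1$ vanishes; passing to projective limits yields an exact sequence $0 \to F^{n-1} A_\dd \to F^n A_\dd \to \varprojlim_N P_\dd(N)^n \to 0$, which says exactly that the degree-$n$ component $(\gr A_\dd)^n \to P_\dd^n$ is onto.

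The one nontrivial step, and the main obstacle I anticipate, is the compatibility claim that animates the whole construction: that the isomorphisms $\varphi_N$ genuinely intertwine the filtered $\pi_{N,N-1}$ on $\gr A_\dd(N)$ with the graded $\pi_{N,N-1}$ on $P_\dd(N)$. This requires unwinding the ordered PBW monomial description of $\pi_{N,N-1}$ from the proof of Lemma \ref{lemma3.B} and tracing it through the canonical symbol map $F^n A_\dd(N)/F^{n-1} A_\dd(N) \to S^n(\gl(N,\C)^{\oL})$. Conceptually this is clear---both projections are ``kill any monomial touching the $N$-th index''---but it is the single place where the argument depends on the specific construction of the projections in Section \ref{sect3.1} rather than on abstract categorical nonsense.
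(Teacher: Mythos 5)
Your argument is correct and is essentially the paper's own proof: the crucial surjectivity step is obtained, exactly as in the paper, from the exactness of projective limits for inverse systems of finite-dimensional spaces (Mittag--Leffler; the paper cites Hartshorne) applied to the short exact sequences $0\to A^{(k-1)}_\dd(N)\to A^{(k)}_\dd(N)\to P^{k}_\dd(N)\to 0$, with Lemma \ref{lemma3.A} identifying the quotient terms. The extra details you supply (the explicit construction of $\phi$, injectivity, and the compatibility of the isomorphisms of Lemma \ref{lemma3.A} with the projections $\pi_{N,N-1}$, which is clear from the ordered-monomial description of the projections) are consistent with what the paper leaves implicit.
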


\begin{proof}
This might seem like an immediate conclusion from Lemma \ref{lemma3.A}, but in fact a proof is needed.\footnote{This step was skipped in the context of \cite[sect. 2.1.9]{Ols-Limits} and \cite[Proposition 8.3.1]{M}.} 

Assume we are given a family of short exact sequences of vector spaces
$$
0\to \mathcal X_N\to \mathcal Y_N\to \mathcal  Z_N\to 0
$$
forming (as $N\to\infty$) a projective system. Then it induces an exact sequence of projective limit spaces, of the form
$$
0\to\varprojlim \mathcal X_N \to \varprojlim \mathcal Y_N \to \varprojlim \mathcal Z_N.
$$
A subtle point is that the last map need not be surjective, unless a suitable additional condition is imposed. One such a condition is that the pre-limit spaces are finite-dimensional. See \cite[Ch. II, Proposition 9.1 and Example 9.1.2]{H}.

Now we apply this general fact in the following setting. Let $A_\dd^{(k)}(N)\subset A_\dd(N)$ denote the subspace of elements of degree at most $k$. Fix an arbitrary $k\in\Z_{\ge1}$ and set
$$
\mathcal X_N:=A^{(k-1)}_\dd(N), \quad \mathcal Y_N:=A^{(k)}_\dd(N), \quad \mathcal Z_N:=\mathcal Y_N/\mathcal X_N.
$$
These spaces have finite dimension, so that we get an exact sequence
$$
0\to\varprojlim \mathcal X_N \to \varprojlim \mathcal Y_N \to \varprojlim \mathcal Y_N/\mathcal X_N\to0.
$$
On the other hand, by virtue of Lemma \ref{lemma3.A}, the space $\mathcal Y_N/\mathcal X_N$ can be identified with the $k$th homogeneous component of $P_\dd(N)$, so that the limit space $\varprojlim \mathcal Y_N/\mathcal X_N$ can be identified with the $k$th homogeneous component of $P_\dd$. 

This completes the proof. 
\end{proof}

Another proof of Proposition \ref{prop3.A} can be obtained from the results of sections \ref{sect4} and \ref{sect5}, where we construct a system of generators in the algebra $P_\dd$  and show that they can be lifted to the algebra $A_\dd$. 

\subsection{The algebras $A_{\infty,L}$ and $P_{\infty,L}$}

From the definition of the algebras $A_\dd$ and $P_\dd$ it is seen that there are natural embeddings
$$
A_\dd \hookrightarrow A_{d+1,L}, \quad P_\dd \hookrightarrow P_{d+1,L}, \qquad d=0,1,2,\dots\,.
$$
This makes it possible to form inductive limit algebras
\begin{equation*}
A_{\infty,L}:=\varinjlim A_\dd, \quad P_{\infty,L}:=\varinjlim P_\dd.
\end{equation*}

The algebra $A_{\infty,L}$ has a natural filtration, the algebra $P_{\infty,L}$ is graded, and Proposition \ref{prop3.A} provides an isomorphism $\gr A_{\infty,L}\to P_{\infty,L}$.

\section{Structure of the algebras  $P_\dd$}\label{sect4}

\subsection{Generators of $P_{0,L}$}\label{sect4.1}

Recall that $\W_L$ denotes the set of nonempty words in the alphabet $[L]=\{1,\dots,\LL\}$. Given a word $w=w_1\dots w_n\in\W_L$ of length $\ell(w)=n$, we construct the following element in the symmetric algebra $S(\gl(N,\C)^\oL)$:
\begin{equation}\label{eq4.C}
p(w;N):=\sum_{a_1,\dots,a_n=1}^N E_{a_1a_2\mid w_1} E_{a_2a_3\mid w_2}\dots E_{a_na_1\mid w_n}.
\end{equation}

We equip $\gl(N,\C)$ with the inner product $\langle X,Y\rangle$ such that $\langle E_{ij},E_{kl}\rangle=\de_{ik}\de_{jl}$. Using it we identify $\gl(N,\C)$ with its dual space, which enables us to treat elements of the symmetric algebra $S(\gl(N,\C)^{\oplus L})$ as polynomial functions of $L$ matrix arguments $X_1,\dots,X_L\in\gl(N,\C)$. Then the elements of $P_{0,L}$ turn into $GL(N,\C)$-invariant polynomial functions, that is, invariant under simultaneous conjugation of $X_1,\dots,X_L$ by the elements of the group $GL(N,\C)$. 

The polynomial function corresponding to $p(w;N)$ was written down in \eqref{eq1.FF}; let us reproduce it again:
\begin{equation}\label{eq4.B}
p_{w;N}(X_1,\dots,X_L)=\tr (X_{w_1}\dots X_{w_n}).
\end{equation}
It follows that $p(w;N)$ lies in the algebra $P_{0,L}(N)$ (another proof can be obtained from Corollary \ref{cor6.B} below). It is also evident that $p(w;N)$ depends only on the circular word $\wh w\in\CW_L$ corresponding to $w\in\W_L$ under the natural projection $\W_L\to\CW_L$ (recall that $\CW_L$ is our notation for the set of circular words in the alphabet $[L]$). For this reason we will also use the alternate notation $p(\wh w;N)$ and $p_{\wh w;N}$.

\begin{lemma}\label{lemma4.A}
For each $N\in\Z_{\ge1}$, the elements $p(\wh w;N)$ with $\wh w\in\CW_L$ generate the algebra $P_{0,L}(N)$.
\end{lemma}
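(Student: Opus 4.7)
The plan is to reduce the statement directly to the classical description of the algebra of matrix invariants, which is already invoked in the introduction as Procesi \cite[Theorem 1.3]{Pr}.

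First, I would recall the identification established just before the statement: the inner product $\langle E_{ij}, E_{kl}\rangle = \delta_{ik}\delta_{jl}$ identifies $\gl(N,\C)$ with its dual, hence identifies the symmetric algebra $S(\gl(N,\C)^{\oplus L})$ with the algebra $\C[\gl(N,\C)^{\oplus L}]$ of polynomial functions of $L$ matrix variables $X_1,\dots,X_L$. Under this identification the subalgebra $P_{0,L}(N)$ becomes exactly the algebra of polynomial invariants for the simultaneous conjugation action of $GL(N,\C)$, and by \eqref{eq4.B} the element $p(w;N)$ corresponds to the trace monomial $\tr(X_{w_1}\cdots X_{w_n})$.

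Next, I would invoke Procesi's theorem, which asserts that the algebra $\C[\gl(N,\C)^{\oplus L}]^{GL(N,\C)}$ is generated by the functions $\tr(X_{w_1}\cdots X_{w_n})$ as $w = w_1\dots w_n$ runs over $W_L$. Because $\tr(X_{w_1}\cdots X_{w_n})$ is invariant under cyclic permutation of the letters $w_1,\dots,w_n$, these generators only depend on the circular word $\wh w \in \wh W_L$, so the collection $\{p(\wh w;N): \wh w \in \wh W_L\}$ already exhausts Procesi's generators. This yields the claim.

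A minor point to address is the difference between invariance under the Lie algebra $\diag\gl(N,\C)$ (as in the definition \eqref{eq3.A} of $P_{0,L}(N)$) and invariance under the Lie group $GL(N,\C)$ used in Procesi's theorem. Since $GL(N,\C)$ is connected, the two invariance conditions coincide on polynomial functions, so there is no ambiguity. I do not foresee a serious obstacle here: the statement is essentially a translation of a classical result, and the only work is to make the dictionary between $p(w;N)\in S(\gl(N,\C)^{\oplus L})$ and $\tr(X_{w_1}\cdots X_{w_n})$ explicit, which is done by \eqref{eq4.B}.
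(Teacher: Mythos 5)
Your proposal is correct and follows essentially the same route as the paper: identify $P_{0,L}(N)$ with the $GL(N,\C)$-invariants of $L$-tuples of matrices under simultaneous conjugation and invoke Procesi's theorem (the first fundamental theorem in type A), with the cyclic-invariance and group-versus-Lie-algebra remarks being the same routine bookkeeping. No issues.
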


\begin{proof}
This is easily deduced from the first fundamental theorem of the classical invariant  theory in type $A$. See Procesi \cite[Theorem 1.3]{Pr} for a detailed proof.  
\end{proof}

Given $n\in\Z_{\ge1}$, let $\CW_L^{(n)}\subset \CW_L$ denote the subset of circular words $\ww$ with $\ell(\wt w)\le n$. It is a finite set. 

\begin{lemma}\label{lemma4.B}
Fix an arbitrary $n\in\Z_{\ge1}$. For $N$ large enough, the elements $p(\wh w;N)$ with $\wh w \in\CW_L^{(n)}$ are algebraically independent.
\end{lemma}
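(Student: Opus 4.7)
The plan is to combine a monotonicity reduction with an explicit verification at a single sufficiently large finite level.

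\textbf{Monotonicity.} First I will observe that for any $M\le N$, the embedding $\iota_M\colon \Mat(M,\C)^{\oplus L}\hookrightarrow\Mat(N,\C)^{\oplus L}$ placing each $X_r\in\Mat(M,\C)$ as the top-left $M\times M$ block of a matrix in $\Mat(N,\C)$ is compatible with traces: $p(\wh w;N)\circ\iota_M=p(\wh w;M)$ for every $\wh w\in\CW_L$, since padding by zero rows and columns does not alter the trace of any matrix product. Consequently, any polynomial relation satisfied by $\{p(\wh w;N):\wh w\in\CW_L^{(n)}\}$ in $P_{0,L}(N)$ pulls back via $\iota_M$ to the same relation in $P_{0,L}(M)$. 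Contrapositively, algebraic independence of $\{p(\wh w;M_0):\wh w\in\CW_L^{(n)}\}$ at a single level $M_0$ implies algebraic independence at every level $N\ge M_0$.

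\textbf{Independence at one level.} It therefore suffices to exhibit one $M_0=M_0(n)$ at which $\{p(\wh w;M_0):\wh w\in\CW_L^{(n)}\}$ is algebraically independent. Equivalently, I must show that the morphism
\[
\Phi\colon \Mat(M_0,\C)^{\oplus L}\to\C^{|\CW_L^{(n)}|},\qquad X\mapsto\bigl(p(\wh w;M_0)(X)\bigr)_{\wh w\in\CW_L^{(n)}},
\]
is dominant. The GIT quotient $\Mat(M_0,\C)^{\oplus L}/\!\!/GL(M_0,\C)$ has transcendence degree $(L-1)M_0^2+1$ (for $L,M_0\ge 2$), so the necessary dimension condition $(L-1)M_0^2+1\ge|\CW_L^{(n)}|$ is automatic once $M_0$ is chosen large enough. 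To prove dominance I will exhibit an explicit point $X^\circ=(X_1^\circ,\ldots,X_L^\circ)\in\Mat(M_0,\C)^{\oplus L}$ (for example with $X_1^\circ$ diagonal with distinct eigenvalues and $X_2^\circ,\ldots,X_L^\circ$ chosen generically) at which the covectors $\{dp(\wh w;M_0)(X^\circ):\wh w\in\CW_L^{(n)}\}$ are linearly independent in the cotangent space, and apply the Jacobian criterion.

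\textbf{Main obstacle.} The verification of this linear independence is the heart of the proof. Since $|\CW_L^{(n)}|$ grows exponentially in $n$, a careful combinatorial analysis of the partial derivatives $\partial p(\wh w;M_0)/\partial (X_r)_{ij}$ at $X^\circ$ is required in order to ensure that distinct circular words yield distinguishable differentials. An alternative, perhaps cleaner, route is via Procesi's second fundamental theorem \cite[Theorem~4.5]{Pr}: the ideal $I_{M_0}\subset\C[x_{\wh w}]$ of relations among all $p(\wh w;M_0)$'s is generated (as an ordinary ideal, after taking all substitutions) by the fundamental trace identity of matrix-degree $M_0+1$, each generator of which involves some variable $x_{\wh w}$ with $|\wh w|\ge M_0+1>n$. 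A filtration of $\C[x_{\wh w}]$ by the maximum word-length of variables appearing in a monomial then reduces the claim to verifying $I_{M_0}\cap\C[x_{\wh w}:|\wh w|\le n]=0$ for $M_0$ sufficiently large, which gives the desired algebraic independence.
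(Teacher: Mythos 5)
Your reduction step is fine: padding by zero blocks shows $p(\wh w;N)\circ\iota_M=p(\wh w;M)$, so independence at one level $M_0$ propagates to all $N\ge M_0$ (the paper does not even need this, since its argument works directly for every large $N$). The problem is that the core of the lemma --- actually establishing independence at some level --- is never proved in your proposal. In route (a) you yourself flag the verification of linear independence of the differentials at the point $X^\circ$ as ``the heart of the proof'' and leave it as a required ``careful combinatorial analysis''; saying $X_2^\circ,\dots,X_L^\circ$ are ``chosen generically'' is not an argument, because identifying what genericity must guarantee is exactly the content of the lemma. Route (b) is circular and, as sketched, logically flawed: from the fact that each substitution instance of the fundamental trace identity of degree $M_0+1$ contains a monomial with a variable $x_{\wh w}$, $\ell(\wh w)\ge M_0+1$, you cannot conclude that the ideal they generate meets $\C[x_{\wh w}:\ell(\wh w)\le n]$ trivially --- an ideal generated by elements each involving ``long'' variables can perfectly well contain nonzero elements in the ``short'' variables alone (the long-variable monomials can cancel in $\C[x]$-linear combinations, and note that each generator also contains monomials purely in short variables, coming from permutations with only short cycles). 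Moreover, a relation among short-word traces may have arbitrarily large polynomial degree, so no grading by total word length rules it out for a fixed $M_0$. Your final sentence then ``reduces the claim to verifying $I_{M_0}\cap\C[x_{\wh w}:\ell(\wh w)\le n]=0$,'' which is precisely the statement to be proved.

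What is missing is an explicit witness, and this is exactly what the paper supplies: for each circular word $\wh w$ of length $\le n$ choose a set $A(\wh w)\subset\{1,\dots,N\}$ of size $\ell(\wh w)$, these sets pairwise disjoint (possible for $N$ large), and form the one-dimensional affine slice
$$
V(\wh w)=\{\,x_{\wh w}E_{a_1a_2\mid w_1}+E_{a_2a_3\mid w_2}+\dots+E_{a_ma_1\mid w_m}\,\},
$$
a cyclic chain of matrix units with a single scalar coefficient. On the sum $V$ of these slices one computes $p_{\wh w;N}\big|_V=x_{\wh w}+(\text{polynomial in the }x_{\wh w'}\text{ with }\ell(\wh w')<\ell(\wh w))$, and this triangular structure yields algebraic independence immediately. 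If you want to keep your Jacobian formulation, a point of $V$ with all coordinates generic would serve as the explicit $X^\circ$, but some such concrete construction must be carried out; without it the proposal does not prove the lemma.
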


One could informally say that all the elements $p(\ww;N)$, $\ww\in\CW_L$, are \emph{asymptotically algebraically independent}. 

\begin{proof}
We extend the argument from \cite[Lemma 2.1.11]{Ols-Limits}. It is convenient to switch to the polynomial functions $p_{\wh w;N}$  on the space $\gl(N,\C)^\oL$. Assuming $N$ large enough, we are going to define an affine subspace $V\subset\gl(N,\C)^\oL$ of dimension $K:=|\CW_L^{(n)}|$. On this subspace, our functions take a special form, from which it will be immediately seen that they are algebraically independent. 

Below we assume that $\ww$ ranges over $\CW_L^{(n)}$. For each such $\ww$ we assign a one-dimensional affine subspace $V(\ww)\subset\gl(N,\C)^\oL$ with distinguished coordinate $x_\ww\in\C$, as follows. 

$\bullet$ First, we pick a subset $A(\ww)\subset\{1,\dots,N\}$ of cardinality $\ell(\ww)$ in such a way that the subsets with distinct labels $\ww$ do not intersect. This is possible provided $N$ is large enough. 

$\bullet$ Second, we pick some representative $w=w_1\dots w_m$ of $\ww$ in $\W_L$ (where $m=\ell(\ww)$).

$\bullet$ Third, we write the indices in $A(\ww)$ in the ascending order, $a_1<\dots<a_m$, and set 
\begin{equation}\label{eq4.E}
V(\ww):=\{x_\ww E_{a_1 a_2\mid w_1}+E_{a_2 a_3\mid w_2}+\dots+E_{a_m a_1\mid w_m}: x_\ww\in\C\}.
\end{equation} 
  
Finally, the subspace $V$ is defined as the sum of all these $V(\ww)$'s. Because the subsets $A(\ww)$ are pairwise disjoint, the subspace $V$ has dimension $K$ and the parameters $x_\ww$ serve as its affine coordinates. 

From this construction it follows that the restriction of $p_{\ww;N}$ to $V$ has the form 
$x_\ww +(\cdots)$, where $(\cdots)$ denotes a possible rest term, which is a polynomial function on $V$ depending only on some of the coordinates $x_{\ww'}$ with $\ell(\ww')<\ell(\ww)$. 

(On can say even more: only the words $\ww'$ with the property that  $\ww$ is a power of $\ww'$ really contribute to this rest term. In particular, if $\ww$ is \emph{aperiodic}, then the rest term is absent.)

From this picture it is clear that our polynomial functions $p_{\wh w;N}$ are indeed algebraically independent. 
\end{proof}

\begin{lemma}\label{lemma4.C}
For each fixed $\ww\in\CW_L$, the elements $p(\ww;N)$ with varying parameter $N$ are consistent with the projections $\pi_{N,N-1}:P_{0,L}(N)\to P_{0,L}(N-1)$ and so give rise to an element of the algebra $P_{0,L}$, which will be denoted by $p(\ww)$.
\end{lemma}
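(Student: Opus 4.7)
The plan is to show directly that $\pi_{N,N-1}(p(\ww;N)) = p(\ww;N-1)$ for every $N$ large enough; the projective-limit element $p(\ww)$ is then simply the collection $\{p(\ww;N)\}$.

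First, I fix a representative $w = w_1\dots w_n$ of $\ww$, so that $p(\ww;N) = p(w;N)$ is given by the sum in \eqref{eq4.C}. The key is to split this sum according to whether the index $N$ appears among $a_1,\dots,a_n$ or not:
\begin{equation*}
p(w;N) \;=\; p(w;N-1) \;+\; R(N),
\qquad
R(N) := \sum_{\substack{1\le a_1,\dots,a_n\le N\\ \exists\, k:\, a_k = N}} E_{a_1a_2\mid w_1}\, E_{a_2a_3\mid w_2}\cdots E_{a_n a_1\mid w_n}.
\end{equation*}
The first summand lies in $S(\gl(N-1,\C)^{\oplus L})$, and because it involves only matrix units $E_{ij\mid\al}$ with $i,j\le N-1$, it is annihilated by $\operatorname{ad}(\diag E_{NN})$, hence lies in $P_{N-1,L}(N)$.

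Next I check that $R(N)\in\mathcal L(N)$. Recall from Lemma \ref{lemma4.A} (and the surrounding discussion via \eqref{eq4.B}) that $p(\ww;N)\in P_{0,L}(N)\subset P_{N-1,L}(N)$; together with what was just noted, this forces $R(N)\in P_{N-1,L}(N)$ as well. To show $R(N)\in\mathcal I^+(N)$, pick any summand of $R(N)$ and let $k$ be the smallest index with $a_k=N$; then the $(k-1)$-th factor (with cyclic indexing) is $E_{a_{k-1}N\mid w_{k-1}}$, which is one of the generators of $\mathcal I^+(N)$. Since the symmetric algebra is commutative, this factor can be pulled out, placing the whole monomial in $\mathcal I^+(N)$. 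By the analog of \eqref{eq3.D} in the symmetric-algebra setting, this already gives $R(N)\in \mathcal L(N)$; one could also argue symmetrically that $R(N)\in\mathcal I^-(N)$ by extracting a factor $E_{Na_{k+1}\mid w_k}$.

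Because $\pi_{N,N-1}$ is, by definition, the projection of $P_{N-1,L}(N)$ onto $S(\gl(N-1,\C)^{\oplus L})$ along $\mathcal L(N)$, the decomposition $p(w;N) = p(w;N-1) + R(N)$ yields $\pi_{N,N-1}(p(\ww;N)) = p(\ww;N-1)$, which is the required consistency. This gives a well-defined projective-limit element $p(\ww)\in P_{0,L}$. There is no real obstacle here; the only point that needs a moment of care is checking that $R(N)$ lies in $P_{N-1,L}(N)$ (so that applying $\pi_{N,N-1}$ to it makes sense), and this follows for free from the $\diag\gl(N)$-invariance of $p(\ww;N)$ combined with the manifest $\diag E_{NN}$-invariance of $p(\ww;N-1)$.
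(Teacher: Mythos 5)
Your argument is correct and is essentially the paper's own proof: the paper likewise splits the sum \eqref{eq4.C} according to whether some index $a_k$ equals $N$, observes that the monomials containing $N$ lie in $\mathcal I^+(N)$ (hence in $\mathcal L(N)$, so they are removed by $\pi_{N,N-1}$), and that the remaining terms give exactly $p(\ww;N-1)$. Your extra verification that the remainder $R(N)$ is $\diag E_{NN}$-invariant is a fine (slightly more careful) spelling-out of a point the paper leaves implicit, not a different method.
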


Notice that since all elements $p(\ww;N)$ are homogeneous, of degree $\ell(\ww)$, so is $p(\ww)$, too.

\begin{proof}
This follows directly from the definition \eqref{eq4.C}. Indeed, the monomials on the right-hand side are parametrized by $n$-tuples of indices $a_1,\dots,a_n$. If at least one of them equals $N$, then the monomial lies in the ideal $\mathcal I^+(N)$. When all such monomials are removed, the rest is the element $p(\ww;N-1)$.
\end{proof}

\begin{theorem}\label{thm4.A}
The algebra $P_{0,\LL}$ is freely generated, as a graded commutative algebra, by the homogeneous elements $p(\ww)$ indexed by the circular words $\ww\in\CW_L$. 
\end{theorem}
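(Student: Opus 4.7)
The plan is to introduce the free graded commutative algebra $\mathcal P$ on abstract generators $P(\ww)$ of degree $\ell(\ww)$, indexed by $\ww\in\CW_L$, and to show that the graded algebra homomorphism
$$
\Phi:\mathcal P\to P_{0,L},\qquad P(\ww)\mapsto p(\ww),
$$
is both injective and surjective. Injectivity is exactly algebraic independence of the $p(\ww)$, and surjectivity says they generate. Lemmas \ref{lemma4.A}, \ref{lemma4.B}, and \ref{lemma4.C} together do all the work; the only care required is to move the information from finite $N$ through the projective limit.

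For \emph{injectivity}, suppose $\Phi(F)=0$ for some nonzero $F\in\mathcal P$. Since $\Phi$ is graded, one may assume $F$ is homogeneous of some degree $n$. Then $F$ is a polynomial in only finitely many generators $P(\ww)$, all with $\ell(\ww)\le n$; in particular all the $\ww$ that occur lie in the finite set $\CW_L^{(n)}$. Writing out $\Phi(F)=0$ at each level $N$ means
$$
F\bigl(\{p(\ww;N)\}_{\ww\in\CW_L^{(n)}}\bigr)=0 \quad\text{in }P_{0,L}(N)
$$
for every $N$. But Lemma \ref{lemma4.B} says that for $N$ large enough this finite family is algebraically independent, forcing $F=0$, a contradiction.

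For \emph{surjectivity}, fix any homogeneous element $x=(x_N)\in P_{0,L}$ of degree $n$. Since the generators $p(\ww;N)$ are homogeneous of degree $\ell(\ww)$, only those with $\ell(\ww)\le n$, i.e.\ those indexed by $\ww\in\CW_L^{(n)}$, can contribute to the degree $n$ component. By Lemma \ref{lemma4.A} (applied in degree $n$), each $x_N$ admits \emph{some} expression as a homogeneous polynomial $F_N$ of total degree $n$ in these finitely many generators. By Lemma \ref{lemma4.B}, for all $N$ large (say $N\ge N_0$) this family is algebraically independent, hence the polynomial $F_N$ is \emph{unique}. Now Lemma \ref{lemma4.C} gives $\pi_{N,N-1}\bigl(p(\ww;N)\bigr)=p(\ww;N-1)$, so applying $\pi_{N,N-1}$ to $F_N\bigl(\{p(\ww;N)\}\bigr)=x_N$ and using $\pi_{N,N-1}(x_N)=x_{N-1}$ yields
$$
F_N\bigl(\{p(\ww;N-1)\}_{\ww\in\CW_L^{(n)}}\bigr)=x_{N-1}=F_{N-1}\bigl(\{p(\ww;N-1)\}_{\ww\in\CW_L^{(n)}}\bigr).
$$
Uniqueness at level $N-1$ (for $N-1\ge N_0$) forces $F_N=F_{N-1}$. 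Hence $F_N$ stabilizes to a single polynomial $F\in\mathcal P$ for all large $N$, and this $F$ satisfies $\Phi(F)=x$ by construction.

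The main potential obstacle is the projective-limit bookkeeping in the surjectivity step, which needs the uniqueness of the polynomial expression at sufficiently large $N$; this is exactly what Lemma \ref{lemma4.B} supplies, together with the compatibility from Lemma \ref{lemma4.C}. Everything else reduces to finite-level statements already proved.
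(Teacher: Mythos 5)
Your argument is correct and follows essentially the same route as the paper: algebraic independence comes from Lemma \ref{lemma4.B}, and surjectivity comes from expressing each $f(N)$ via Lemma \ref{lemma4.A} and observing that the expression stabilizes for large $N$. Your write-up merely makes explicit (via uniqueness from Lemma \ref{lemma4.B} plus the compatibility in Lemma \ref{lemma4.C}) the stabilization step that the paper states in one sentence.
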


\begin{proof}
From Lemma \ref{lemma4.B} it follows that the elements $p(\ww)$ are algebraically independent. Let us show that they generate the whole algebra. Take an arbitrary homogeneous element $f\in P_{0,L}$ and consider the corresponding sequence $\{f(N)\in P_{0,L}(N): N=1,2,\dots\}$. Let  $n:=\deg f$. By Lemma \ref{lemma4.A}, for each $N$, the element $f(N)$ can be written as a polynomial expression $\phi_N(\cdots)$ in the elements $p(\ww;N)$ with $\ell(\ww)\le n$. By virtue of Lemma \ref{lemma4.B}, as $N$ gets large, the form of the polynomial $\phi_N$ stabilizes, so that we may write $\phi_N\equiv\phi$. This means that the same polynomial $\phi$ also expresses $f$ through the elements $p(\ww)$. 
\end{proof}

\subsection{Generators of $P_\dd$}\label{sect4.2}

Throughout this subsection $d$ is a fixed positive integer and $N$ is an integer $>d$. 

Given a word $w=w_1\dots w_n\in\W_L$, where $w_1,\dots,w_n\in[L]$, we construct the following elements in the symmetric algebra $S(\gl(N,\C)^\oL)$:
\begin{equation}\label{eq4.D}
p_{ij}(w;N):=\sum_{a_1,\dots,a_{n-1}=1}^N E_{i a_1\mid w_1} E_{a_1a_2\mid w_2}\dots E_{a_{n-1}j\mid w_n}, \quad 1\le i,j\le N.
\end{equation}
Again, one can treat these elements as polynomial functions in $L$ matrix arguments (cf. \eqref{eq4.B}):
\begin{equation}\label{eq4.D1}
p_{ij;w;N}(X_1,\dots,X_L)=(X_{w_1}\dots X_{w_n})_{ij}, \quad 1\le i,j\le N.
\end{equation}
These expressions have already been mentioned in the introduction, \eqref{eq1.GG}. 

If $i,j\le d$, then the right-hand side of \eqref{eq4.D1} is invariant under the diagonal action of the subgroup $GL_d(N,\C)\subset GL(N,\C)$ corresponding to the Lie subalgebra $\gl_d(N,\C)$ (recall that this subgroup is isomorphic to $GL(N-d,\C)$).  Therefore,   
$$
p_{ij}(w;N)\in P_\dd(N) \quad  \text{for $1\le i,j\le d$}.
$$
Another proof of this claim can be obtained from the computation in Lemma \ref{lemma6.A}, cf. Corollary \ref{cor6.A} below.

\begin{lemma}[cf. Lemma \ref{lemma4.A}]\label{lemma4.A1}
For each $N>d$, the set 
\begin{equation}\label{eq4.F}
 \G_\dd(N):=\{p(\ww;N): \ww\in\CW_L\}\cup\{p_{ij}(w;N): w\in\W_L, 1\le i,j\le d\}
\end{equation}
generates the algebra $P_\dd(N)$. 
\end{lemma}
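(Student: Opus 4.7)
The plan is to reduce this to the classical first fundamental theorem (FFT) of invariant theory, applied to an enlarged ambient space carrying auxiliary matrix arguments that play the role of vectors and covectors. Set $W:=\gl(N,\C)^{\oplus L}$ and pass to
$$
V:=W\oplus \Mat(N,d;\C)\oplus\Mat(d,N;\C),
$$
with $G:=GL(N,\C)$ acting by simultaneous conjugation on $W$, by left multiplication on the auxiliary $N\times d$ matrix $A$, and by right multiplication by $g^{-1}$ on the auxiliary $d\times N$ matrix $B$. By the FFT for mixed matrix--vector--covector invariants (a standard extension of the theorem of Procesi cited in the proof of Lemma \ref{lemma4.A}), the algebra $\C[V]^G$ is generated by the traces $\tr(X_{w_1}\cdots X_{w_n})$ and the entries $(BX_{w_1}\cdots X_{w_n}A)_{ij}$, indexed by words $w\in \W_L$ (possibly empty) and $1\le i,j\le d$.

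Next I would interpret $P_\dd(N)$ as $G$-invariants on the closed $G$-stable subvariety $Z\subset V$ defined by $BA=I_d$. A direct matrix calculation shows that for each fixed $X\in W$ the $G$-action is transitive on the fiber of $Z\to W$ above $X$, with stabilizer of the distinguished point $(X,I_{N,d},I_{d,N})$ equal to $GL_d(N,\C)$: given any $(A,B)$ with $BA=I_d$, one picks an $N\times(N-d)$ matrix $Y$ whose columns form a basis of $\ker B$; then $g:=[A\,|\,Y]\in GL(N,\C)$ is invertible and sends $(I_{N,d},I_{d,N})$ to $(A,B)$. Consequently, evaluation at the distinguished point yields an algebra isomorphism $\C[Z]^G \xrightarrow{\;\sim\;} P_\dd(N)$, $\tilde f\mapsto \bigl(X\mapsto \tilde f(X,I_{N,d},I_{d,N})\bigr)$.

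To finish, reductivity of $G$ makes the restriction map $\C[V]^G\twoheadrightarrow\C[Z]^G$ surjective, so every element of $P_\dd(N)$ is obtained by specializing some polynomial in the FFT generators to $(A,B)=(I_{N,d},I_{d,N})$. Under this specialization, $\tr(X_{w_1}\cdots X_{w_n})\mapsto p(\ww;N)$ and $(BX_{w_1}\cdots X_{w_n}A)_{ij}\mapsto (X_{w_1}\cdots X_{w_n})_{ij}=p_{ij}(w;N)$, while the empty-word terms $(BA)_{ij}$ merely contribute the scalars $\de_{ij}$. This proves that $\G_\dd(N)$ generates $P_\dd(N)$. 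The step I expect to require the most care is verifying that $Z$ is a $G$-homogeneous bundle over $W$ with fiber $G/GL_d(N,\C)$; once that is in place, the FFT together with reductivity of $G$ finishes the argument automatically.
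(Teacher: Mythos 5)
Your argument is correct, but it takes a genuinely different route from the paper's. The paper (following \cite[sect.\ 2.1.10]{Ols-Limits}) writes each $X_\al$ as a $2\times 2$ block matrix for the splitting $N=d+(N-d)$, introduces auxiliary invariants $\wt p_{\ww;N}=\tr(X^{22}_{w_1}\cdots X^{22}_{w_n})$ and $\wt p_{ij;w;N}=(X^{12}_{w_1}X^{22}_{w_2}\cdots X^{22}_{w_{n-1}}X^{21}_{w_n})_{ij}$, shows that these and the elements of $\G_\dd(N)$ are expressed through one another by a degree-triangular substitution, and then invokes the mixed first fundamental theorem for the \emph{small} group $GL(N-d,\C)$ acting on the blocks (matrices $X^{22}_\al$, vectors coming from $X^{21}_\al$, covectors coming from $X^{12}_\al$). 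You instead keep the big group $G=GL(N,\C)$, enlarge the space by the auxiliary pair $(A,B)$, apply the mixed FFT for $G$, and then descend along the slice $Z=\{BA=I_d\}$ using Hilbert's surjectivity of restriction of invariants to a closed $G$-stable subvariety. Your route is more uniform and avoids the block bookkeeping; the paper's route is more elementary given Procesi's theorem, and its auxiliary generators $\wt p$ are reused in the proof of Lemma \ref{lemma4.B1}. One step you should make explicit: your orbit--stabilizer computation (transitivity of $G$ on $\{(A,B):BA=I_d\}$ with stabilizer $GL_d(N,\C)$ at the base point) gives well-definedness and injectivity of the evaluation map $\C[Z]^G\to P_\dd(N)$, but what your argument actually needs is its \emph{surjectivity}, i.e.\ the standard algebraic Frobenius-reciprocity isomorphism $\C[W\times G/H]^G\simeq \C[W]^H$ with $H=GL_d(N,\C)$: given $f\in P_\dd(N)$, set $\phi(X,g):=f(g^{-1}Xg)$ on $W\times G$ and descend it along $W\times G\to W\times(G/H)\simeq Z$, which uses that $H\simeq GL(N-d,\C)$ is reductive and acts freely (so $G/H$ is affine and $\C[Z]\simeq\C[W\times G]^{H}$). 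With that sentence supplied, your proof is complete.
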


\begin{proof}
In the particular case $L=1$ this was shown in \cite[sect. 2.1.10]{Ols-Limits}. We use the same argument, with a slight adaptation only. 

As in Lemma \ref{lemma4.B}, it is convenient to deal with polynomial functions of $L$ matrix arguments $X_1,\dots,X_L$. Below the index $\al$ ranges over the set $[L]=\{1,\dots,L\}$. Write each $X_\al$ as a $2\times2$ block matrix with respect to the decomposition $N=d+(N-d)$:
$$
X_\al=\begin{bmatrix} X^{11}_\al & X^{12}_\al\\ X^{21}_\al & X^{22}_\al \end{bmatrix}, \quad 1\le\al\le L.
$$
Using this notation, we modify the definition of polynomial functions $p_{\ww;N}$ and $p_{ij;w;N}$ in the following way. 

First, given a word $w=w_1\dots w_n$, where $w_1,\dots,w_n\in[L]$, we set 
$$
\wt p_{w;N}(X_1,\dots,X_L):=\tr (X^{22}_{w_1}\dots X^{22}_{w_n}).
$$
This polynomial depends only on the circular word $\ww$, the image of $w$ under the projection $\W_L\to \CW_L$, so that we may use the alternate notation $\wt p_{\ww;N}$.

Second, for $1\le i,j\le d$, we set
$$
\wt p_{ij;w;N}(X_1,\dots,X_L):=\begin{cases}
(X^{11}_{w_1})_{ij}, & n=1,  \\
(X^{12}_{w_1}X^{21}_{w_2})_{ij}, & n=2, \\
(X^{12}_{w_1}X^{22}_{w_2}\dots X^{22}_{w_{n-1}}X^{21}_{w_n})_{ij}, & n\ge3.
\end{cases}
$$
The polynomial functions corresponding to the elements of $\G_\dd(N)$ can be expressed through the polynomial functions from the set 
\begin{equation*}
 \wt \G_\dd(N):=\{\wt p_{\ww;N}: \ww\in\CW_L\}\cup\{\wt p_{ij;w;N}: w\in\W_L, 1\le i,j\le d\}
\end{equation*}
and vice versa, cf. \cite[sect. 2.1.10]{Ols-Limits}. More precisely, for $n=1$ (then $w=w_1=\al\in[L]$) we have $p_{ij;\al;N}=\wt p_{ij;\al;N}$, and for $n\ge2$, the difference $p_{ij;w;N}-\wt p_{ij;w;N}$ can be written as a polynomial in elements of the form $\wt p_{-;-;N}$ and degree $<n$. Likewise, for $w=w_1=\al$ we have 
$$
p_{\al;N}=\wt p_{\al;N}+\sum_{i=1}^d \wt p_{ii;\al;N},
$$
and for $n>2$, the difference $p_{\ww;N}-\wt p_{\ww;N}$ can be written as a polynomial in elements of the form $\wt p_{-;N}$ and  $\wt p_{-;-;N}$, each of which has degree $<n$. 

Therefore, we may switch from $\G_\dd(N)$ to $\wt\G_\dd(N)$. Then the desired result is obtained from the  first fundamental theorem of the classical invariant  theory; for more detail,  see again \cite[sect. 2.1.10]{Ols-Limits}.
\end{proof}

\begin{lemma}[cf. Lemma \ref{lemma4.B}]\label{lemma4.B1}
Fix an arbitrary $n\in\Z_{\ge1}$. For $N$ large enough, the elements of $\G_\dd(N)$ of degree at most $n$ are algebraically independent. 
\end{lemma}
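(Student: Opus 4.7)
The plan is to extend the construction from the proof of Lemma \ref{lemma4.B} by introducing additional coordinates for the generators $p_{ij}(w;N)$. Switching once more to polynomial functions on $\gl(N,\C)^{\oplus L}$, I will exhibit, for $N$ sufficiently large, an affine subspace $V\subset\gl(N,\C)^{\oplus L}$ of dimension equal to $K:=|\G_\dd(N)\cap\{\deg\le n\}|$ on which the restrictions of the generators form a triangular polynomial system in the distinguished affine coordinates of $V$.

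The construction proceeds as follows. For each circular word $\ww\in\CW_L^{(n)}$, I place a one-dimensional subspace $V(\ww)$ with coordinate $x_\ww$ in the lower-right $(N-d)\times(N-d)$ block exactly as in Lemma \ref{lemma4.B}. For each triple $(i,j,w)$ with $1\le i,j\le d$ and $\ell(w)\le n$, I introduce an additional one-dimensional subspace $V(i,j,w)$ with coordinate $y_{i,j,w}$. If $\ell(w)=1$ and $w=\alpha$, set $V(i,j,\alpha):=\{y_{i,j,\alpha}\,E_{i,j\mid\alpha}\}$ inside the top-left $d\times d$ block. If $\ell(w)=m\ge 2$, pick fresh distinct auxiliary indices $a_1,\dots,a_{m-1}\in\{d+1,\dots,N\}$, disjoint from every other auxiliary index set used in the construction, and take
\[
V(i,j,w):=\{y_{i,j,w}\,E_{i,a_1\mid w_1}+E_{a_1,a_2\mid w_2}+\dots+E_{a_{m-1},j\mid w_m}\}.
\]
For $N$ large enough, such disjoint choices exist; the sum of all these one-dimensional subspaces is direct, so $V$ has dimension $K$.

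The main step is a combinatorial path analysis on $V$. Because the auxiliary index sets attached to the various $V(\ccdot)$'s are pairwise disjoint, any chain of intermediate indices in $\{d+1,\dots,N\}$ that a matrix-product path enters must be traversed to completion. Consequently, for $p_{ij}(w;N)$ with $\ell(w)=m$, every contributing path from $i$ to $j$ in the product $X_{w_1}\dots X_{w_m}$ decomposes into a sequence of direct hops inside the top-left $d\times d$ block and excursions through chains $V(i',j',w')$ with $\ell(w')\ge 2$; no matrix unit from any $V(\ww')$ cycle can be visited, since those lie entirely in the bottom-right block and are isolated from the top-left block. Each segment contributes a factor $y_{i',j',w'}$ indexed by a consecutive subword of $w$; the unique single-segment path of full length produces the diagonal term $y_{ij,w}$ with coefficient $1$, while every multi-segment path yields a product of $y$-coordinates indexed by strictly shorter words. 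For $p(\ww;N)$ with $\ell(\ww)=m$, the trace sums over closed paths: those staying in the bottom-right block give $x_\ww$ plus corrections of the form $x_{\ww'}^k$ with $\ww$ a proper power of $\ww'$ (exactly as in Lemma \ref{lemma4.B}), while closed paths that visit the top-left block contribute polynomials in the $y$-coordinates.

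Ordering the coordinates so that all $y_{i,j,w}$ (by increasing $\ell(w)$) precede all $x_\ww$ (by increasing $\ell(\ww)$), the restriction of each generator takes the form (its own distinguished coordinate, with nonzero coefficient) $+$ (polynomial in strictly earlier coordinates). This triangular structure makes the restriction map $V\to\C^K$ dominant, so the restricted generators — and therefore the original elements of $\G_\dd(N)$ of degree at most $n$ — are algebraically independent. The main obstacle is the path-counting verification above; once the disjointness of the auxiliary indices is exploited, triangularity is essentially immediate, and the rest of the argument is formal.
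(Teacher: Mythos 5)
Your proposal is correct and takes essentially the same route as the paper: the identical affine subspace $V$, built from pairwise disjoint one-dimensional pieces $V(\ww)$ (cycles in the lower-right block) and $V(i,j,w)$ (chains from $i$ to $j$ through fresh indices in $\{d+1,\dots,N\}$), with algebraic independence read off from a triangular structure of the restricted generators. The only deviation is that the paper first switches from $\G_\dd(N)$ to the block-modified system $\wt\G_\dd(N)$ (as in the proof of Lemma \ref{lemma4.A1}), after which the restriction of each $\wt p_{ij;w;N}$ is exactly its coordinate, whereas you keep the original generators and absorb the additional multi-segment path contributions (and the mixed terms in the traces) into a triangular ordering with nonzero diagonal coefficients --- a valid, slightly more hands-on version of the same argument.
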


\begin{proof}
In the case $L=1$ this was shown in \cite[sect. 2.1.11]{Ols-Limits}, and we follow the same idea. The argument below is an extension of  the argument  in Lemma \ref{lemma4.B}. 

As in Lemma \ref{lemma4.A1}, it is convenient to switch from $\G_\dd(N)$ to $\wt\G_\dd(N)$, so that we will deal with the elements  $\wt p_{\ww;N}$ and $\wt p_{ij;w;N}$, where $\ww$ and $w$ are words of length bounded from above by a constant $n$. We regard these elements as polynomial functions on $\gl(N,\C)^{\oplus L}$. 

We construct one-dimensional affine subspaces $V(\ww)\subset \gl(N,\C)^{\oplus L}$ as in Lemma \ref{lemma4.B}, with the only difference that the corresponding sets $A(\ww)$ should be contained in $\{d+1,\dots,N\}$. 

Next, we construct additional one-dimensional affine subspaces $V(i,j,w)\subset \gl(N,\C)^{\oplus L}$ indexed by triples $(i,j,w)$, where $1\le i,j\le d$ and $w\in \W_L$ is an arbitrary words of length $\ell(w)\le n$. To this end, we attach to each such triple a coordinate $x_{ijw}$ and a subset $A(i,j,w)\subset\{d+1,\dots,N\}$ of cardinality $\ell(w)-1$. We do this in such a way that all previously defined subsets $A(\ww)$ and the new subsets $A(i,j,w)$ are pairwise disjoint, which is possible for $N\gg n$. The definition of $V(i,j,w)$ is similar to \eqref{eq4.E}:
\begin{equation*}
V(i,j,w):=\{x_{ijw} E_{i a_1\mid w_1}+E_{a_1 a_2\mid w_2}+\dots+E_{a_{m-1} j\mid w_m}: x_{ijw}\in\C\},
\end{equation*} 
where $m=\ell(w)$ and $a_1<\dots <a_{m-1}$ are the elements of $A(i,j,w)$. In the case $m=1$, when $w$ consists of a single letter $w_1$, the set $A(i,j,w)$ is empty and the right-hand side is reduced to $\{x_{ijw} E_{i j\mid w_1}: x_{ijw}\in\C\}$. 

Finally, we define $V$ as the sum of these one-dimensional subspaces, of the form $V(\ww)$ and $V(i,j,w)$. This is an affine subspace with coordinates $x_\ww$ and $x_{ijw}$. 

Now we restrict our polynomial functions to $V$. The restriction of $\wt p_{\ww;N}$ looks exactly as in the context of Lemma \ref{lemma4.B}. Next, the restriction of $\wt p_{ij;w;N}$ is simply the coordinate function $x_{ijw}$.  From this it is clear that our functions are algebraically independent.
\end{proof}

\begin{lemma}[cf. Lemma \ref{lemma4.C}]\label{lemma4.C1}
Fix the parameters $w$ and $i,j$, and let the parameter $N$ vary. Then the corresponding elements $p_{ij}(w;N)\in\G_\dd(N)$  are consistent with the projections $\pi_{N,N-1}:P_\dd(N)\to P_\dd(N-1)$ and so give rise to an element of the algebra $P_\dd$, which will be denoted by $p_{ij}(w)$.
\end{lemma}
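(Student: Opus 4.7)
The plan is to imitate the proof of Lemma~\ref{lemma4.C} almost verbatim; the only extra point to check is that the free boundary indices $i,j$ appearing in \eqref{eq4.D} do not interfere with the projection. First I would observe that the hypothesis $d\le N-1$ forces $\gl_d(N,\C)\supseteq\gl_{N-1}(N,\C)$, which yields the inclusion of invariants $P_\dd(N)\subseteq P_{N-1,L}(N)$, so $\pi_{N,N-1}$ is indeed defined on $p_{ij}(w;N)$. By construction $\pi_{N,N-1}$ is the projection of $P_{N-1,L}(N)$ onto $S(\gl(N-1,\C)^{\oplus L})$ along $\mathcal L(N)$, and concretely it discards every monomial containing a factor $E_{kl\mid\al}$ with $k=N$ or $l=N$ and keeps every other monomial intact.

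Next I would apply this recipe term-by-term to the defining sum
$$
p_{ij}(w;N)\;=\;\sum_{a_1,\dots,a_{n-1}=1}^N E_{i a_1\mid w_1}\,E_{a_1 a_2\mid w_2}\cdots E_{a_{n-1} j\mid w_n}.
$$
The row/column indices occurring in a given monomial are precisely $i,j,a_1,\dots,a_{n-1}$. Because $i,j\le d<N$, the boundary indices can never trigger the projection, so a monomial survives iff $a_1,\dots,a_{n-1}\in\{1,\dots,N-1\}$. Collecting the surviving monomials yields exactly the sum defining $p_{ij}(w;N-1)$, which is the desired consistency relation $\pi_{N,N-1}(p_{ij}(w;N))=p_{ij}(w;N-1)$. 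The family $\{p_{ij}(w;N)\}_{N>d}$ therefore defines an element $p_{ij}(w)\in P_\dd$, homogeneous of degree $n=\ell(w)$.

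The argument is entirely routine once the index bookkeeping is in place, and I do not foresee any real obstacle. The sole difference from Lemma~\ref{lemma4.C} is that there the trace structure closes the monomial into a cyclic product and no free boundary index appears, whereas here the open boundary is harmless precisely because the assumption $i,j\le d$ keeps the exposed indices away from $N$.
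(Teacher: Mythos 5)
Your proof is correct and follows essentially the same route as the paper, which simply reuses the argument of Lemma \ref{lemma4.C}: the projection $\pi_{N,N-1}$ kills exactly the monomials of \eqref{eq4.D} in which some summation index $a_k$ equals $N$, and what remains is $p_{ij}(w;N-1)$. Your extra remark that the boundary indices $i,j\le d<N$ cannot trigger the projection is precisely the (implicit) point distinguishing this case from the traced case, so nothing is missing.
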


\begin{proof}
The same argument  as in Lemma \ref{lemma4.C}.
\end{proof}

\begin{theorem}[cf. Theorem \ref{thm4.A}]\label{thm4.B}
For each $d=1,2,\dots$, the algebra $P_\dd$ is freely generated, as a graded unital commutative algebra, by the homogeneous  elements of the form $p(\ww)$, where $\ww$ ranges over $\CW_L$, and  of the form $p_{ij}(w)$, where  $w$ ranges over $\W_L$ and $1\le i,j\le d$.
\end{theorem}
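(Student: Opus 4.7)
The plan is to follow the same scheme as in the proof of Theorem \ref{thm4.A}, now using the three parallel lemmas (\ref{lemma4.A1}, \ref{lemma4.B1}, \ref{lemma4.C1}) adapted to the setting with $d\ge 1$. The content of the theorem splits into two assertions: (a) the elements $p(\ww)$ and $p_{ij}(w)$ are algebraically independent in $P_\dd$; (b) together they generate $P_\dd$ as a unital commutative algebra.

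For (a), I would argue by contradiction. Suppose some nontrivial polynomial relation $\Phi$ holds among finitely many of these generators. Let $n$ be an upper bound on the degrees of the generators appearing in $\Phi$. The relation $\Phi$ must then hold after projection to $P_\dd(N)$ for every $N>d$, i.e.\ the finite-level images $p(\ww;N)$ and $p_{ij}(w;N)$ (of degrees $\le n$) satisfy $\Phi$. By Lemma \ref{lemma4.B1}, once $N$ is large enough these finite-level elements are algebraically independent, which forces $\Phi=0$, a contradiction.

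For (b), I would take an arbitrary homogeneous element $f\in P_\dd$ of some degree $n$ and consider the coherent system $\{f(N)\in P_\dd(N)\}$. By Lemma \ref{lemma4.A1}, for each $N>d$ we can write
\begin{equation*}
f(N)=\Phi_N\!\left(\{p(\ww;N)\}_{\ell(\ww)\le n},\;\{p_{ij}(w;N)\}_{\ell(w)\le n,\,1\le i,j\le d}\right)
\end{equation*}
for some polynomial $\Phi_N$ (only finitely many generators of degree $\le n$ are involved, thanks to the grading). The key point is that $\Phi_N$ stabilizes for large $N$: indeed, any two polynomial expressions $\Phi_N$ and $\Phi_{N'}$ that represent $f(N)$ and $f(N')$ respectively must agree once $N,N'$ are both large enough, because the difference $\Phi_N-\Phi_{N'}$ would give an algebraic relation among the generators of degree $\le n$ at some large level, contradicting Lemma \ref{lemma4.B1}. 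Thus there is a single polynomial $\Phi$ with $\Phi_N=\Phi$ for all $N\gg 0$; by Lemma \ref{lemma4.C1} (and its analogue Lemma \ref{lemma4.C} for the circular generators), this same $\Phi$ then expresses $f$ as a polynomial in the limit generators $p(\ww)$ and $p_{ij}(w)$.

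The main subtle point is the stabilization argument for $\Phi_N$ in step (b): one has to be careful that the expression in Lemma \ref{lemma4.A1} can be chosen in an $N$-consistent way. This is handled precisely by invoking the asymptotic algebraic independence of Lemma \ref{lemma4.B1}, which ensures that for $N$ large enough the coefficients of $\Phi_N$ (in the monomials of the generators of degree $\le n$) are uniquely determined by $f(N)$. Once uniqueness is in force at each sufficiently large level, compatibility with the projections $\pi_{N,N-1}$, combined with Lemmas \ref{lemma4.C} and \ref{lemma4.C1}, forces those coefficients to be independent of $N$. No further difficulty arises, since everything else is a direct translation of the argument for Theorem \ref{thm4.A}.
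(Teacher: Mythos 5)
Your proposal is correct and follows essentially the same route as the paper, which proves Theorem \ref{thm4.B} by combining Lemmas \ref{lemma4.A1}, \ref{lemma4.B1}, and \ref{lemma4.C1} with the stabilization argument from Theorem \ref{thm4.A} (generation at each level, asymptotic algebraic independence forcing the expressing polynomial $\Phi_N$ to stabilize, and consistency with the projections $\pi_{N,N-1}$ identifying the stable polynomial with an expression for $f$ in the limit generators). Your elaboration of why $\Phi_N$ stabilizes is exactly the intended justification, so nothing is missing.
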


\begin{proof}
Apply the three lemmas stated above together with the same argument as in Theorem \ref{thm4.A}.  
\end{proof}

\subsection{The PBW property}

Let $\Aa=\bigcup_{n\ge0}\Aa^{(n)}$ be a filtered unital associative algebra and $\Pp=\bigoplus_{n\ge0} \Pp^n$ be the corresponding graded algebra, so that $\Pp^n=\Aa^{(n)}/\Aa^{(n-1)}$ for $n>1$ and $\Pp^{(0)}=\Aa^{(0)}$. We assume that $\Aa$ is unital, $\Aa^{(0)}$ coincides with the one-dimensional space spanned by $1$, and $\dim \Aa^{(n)}<\infty$ for all $n$. 

\begin{definition}\label{def4.A}
Let $b$ be a nonzero element of $\Pp^n$, where $n\ge1$. We say that an element $a\in\Aa^{(n)}$ is a \emph{lifting} of $b$ to the algebra $\Aa$ if $b$ is the image of $a$ under the natural map $\Aa^{(n)}\to \Pp^n$. In this case we also say that $b$ is the \emph{top degree term} of $a$.
\end{definition}

The following lemma is obvious.

\begin{lemma}\label{lemma4.D}
Suppose that $\Pp$ is commutative and generated by the unit $1$ and a family $\{b_\om:\om\in\Om\}\subset \Pp$ of homogeneous, algebraically independent elements of degree $\ge1$ {\rm(}here $\Om$ is a set of indices{\rm)}. Pick an arbitrary lifting $\{a_\om\}$ of the elements $b_\om$ to the algebra $\Aa$ and endow\/ $\Om$ with a total order in an arbitrary way.  

Then the unit $1$ and the ordered monomials built from the elements $a_\om$ form a basis of $\Aa$, consistent with the filtration. 
\end{lemma}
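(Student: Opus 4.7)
The plan is to argue by induction on the filtration degree $n$, using the short exact sequence
\begin{equation*}
0\to\Aa^{(n-1)}\to \Aa^{(n)}\to \Pp^n\to 0
\end{equation*}
together with the fact that, since $\Pp$ is the polynomial algebra on $\{b_\om\}$, the ordered monomials of total degree exactly $n$ in the $b_\om$'s form a basis of the homogeneous component $\Pp^n$.

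First I would fix notation: for an ordered monomial $\mu=a_{\om_1}^{k_1}\cdots a_{\om_r}^{k_r}$ (with $\om_1<\dots<\om_r$) set $\deg\mu:=\sum_i k_i \deg b_{\om_i}$, and let $\mathcal B_n$ denote the set consisting of $1$ and all ordered monomials $\mu$ with $\deg\mu\le n$. The base case $n=0$ is trivial since $\Aa^{(0)}=\C\cdot 1$. For the inductive step, assume $\mathcal B_{n-1}$ is a basis of $\Aa^{(n-1)}$. An ordered monomial $\mu$ as above has filtration degree $\le \deg\mu$, hence lies in $\Aa^{(n)}$ whenever $\deg\mu\le n$. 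Moreover, because $\Pp$ is commutative, the image of $\mu$ in the graded quotient $\Pp^{\deg\mu}$ equals the product $b_{\om_1}^{k_1}\cdots b_{\om_r}^{k_r}$ of the top-degree terms, irrespective of the order in which the factors are written.

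Let $\mathcal N_n$ be the set of ordered monomials with $\deg\mu=n$. By the preceding remark, the images of the elements of $\mathcal N_n$ under $\Aa^{(n)}\to\Pp^n$ are precisely the ordered monomials in the $b_\om$'s of total degree $n$, and these form a basis of $\Pp^n$ because the $b_\om$'s are homogeneous and algebraically independent generators of the commutative graded algebra $\Pp$. Hence the elements of $\mathcal N_n$ constitute a lift of a basis of $\Pp^n$ to $\Aa^{(n)}$. Combining this lift with the basis $\mathcal B_{n-1}$ of $\Aa^{(n-1)}$ via the short exact sequence above, one concludes that $\mathcal B_n=\mathcal B_{n-1}\sqcup \mathcal N_n$ is a basis of $\Aa^{(n)}$. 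This takes care of the inductive step, and taking the union over all $n\ge 0$ yields the statement for $\Aa=\bigcup_n\Aa^{(n)}$; consistency with the filtration is built into the argument, since each $\mathcal B_n$ is by construction a basis of $\Aa^{(n)}$.

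There is really no hard step here; the only point requiring a moment of care is that the image of an ordered monomial $\mu$ in $\Aa^{(n)}/\Aa^{(n-1)}$ coincides with the commutative product of the $b_\om$'s, which uses both the commutativity of $\Pp$ and the fact that $\deg\mu=n$ (otherwise the image in $\Pp^n$ would be zero rather than the product of top terms). Once this is noted the inductive mechanism runs without obstacle.
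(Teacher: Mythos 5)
Your proof is correct; the paper itself offers no argument (it simply declares Lemma \ref{lemma4.D} obvious), and your induction on the filtration degree via the exact sequences $0\to\Aa^{(n-1)}\to\Aa^{(n)}\to\Pp^n\to0$, using that the weighted-degree-$n$ monomials in the algebraically independent $b_\om$ form a basis of $\Pp^n$, is exactly the standard argument implicit there. The one point needing care — that the image of an ordered monomial of degree exactly $n$ in $\Aa^{(n)}/\Aa^{(n-1)}$ is the corresponding commutative monomial in the $b_\om$ — you have identified and handled correctly.
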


We return to our concrete situation. Due to Proposition \ref{prop3.A}, we may identify $P_\dd$ with $\gr A_\dd$. Next,  Theorem \ref{thm4.B} provides a family of homogeneous, algebraically independent generators of the algebra $P_\dd$. Therefore, we may apply Lemma \ref{lemma4.D} to an arbitrary lifting of these generators to $A_\dd$. This gives us a version of the Poincar\'e-Birkhoff-Witt theorem for the algebra $A_\dd$.

A concrete lifting will be explicitly constructed in Section \ref{sect6}.

\subsection{Remarks}\label{sect4.4}

We use the following notation: $A^{(n)}_\dd\subset A_\dd$ is the $n$th term of the filtration of $A_\dd$; $A^{(n)}_\dd(N)\subset A_\dd(N)$ has the same meaning; $P^{n}_\dd\subset P_\dd$ is the $n$th homogeneous component of the graded algebra $P_\dd$; $P^{n}_\dd(N)\subset P_\dd(N)$ has the same meaning.

The following claims follow directly from the results of the present section. 

1. The dimension $\dim P^{n}_\dd$ is finite and is equal to $\dim P^{n}_\dd(N)$ provided $N\gg n$.

2. Likewise, $\dim A^{(n)}_\dd$ is finite and is equal to $\dim A^{(n)}_\dd(N)$ provided $N\gg n$.

3. From the very definition of the algebra $A_\dd$ as a projective limit of the algebras $A_\dd(N)$ it follows that there are canonical algebra morphisms  $\pi_{\infty,N}: A_\dd\to A_\dd(N)$, which are defined for $N\ge d$ and satisfy the relations
$$
\pi_{N,N-1}\circ \pi_{\infty,N}=\pi_{\infty,N-1}, \qquad N>d.
$$
These morphisms preserve the filtration.

4. If $N\gg n$, then the kernel of the canonical projection $\pi_{\infty,N}$ has trivial intersection with $A^{(n)}_\dd$.

\section{The Poisson structure in $P_\dd$}\label{sect5}

\subsection{Formulas for the Poisson bracket}

\begin{definition}\label{def5.A}
For each $d\ge1$, we denote by $\YY_\dd\subset P_\dd$ the unital subalgebra generated by the elements $p_{ij}(z)$, where $1\le i,j\le d$ and $z\in\W_L$. 
\end{definition}

By virtue of Theorem \ref{thm4.B}, 
$$
P_\dd\simeq P_{0,L}\otimes \YY_\dd
$$
(tensor product of graded commutative algebras). 

Due to the canonical isomorphism $P_\dd\simeq \gr A_\dd$ (Proposition \ref{prop3.A}), the  graded commutative algebra $P_\dd$ is endowed with a natural Poisson structure, with a Poisson bracket $\{-,-\}$ of degree $-1$. The Poisson structure is uniquely determined once we know the Poisson bracket for each pair of generators. The three propositions below give explicit expressions for these brackets. The notation is the following:

\begin{itemize}

\item $z$ and $w$ are two nonempty words in the alphabet $[L]=\{1,\dots,L\}$,
$$
z=z_1\dots z_m, \quad w=w_1\dots w_n,
$$
where $z_i,w_j\in[L]$.

\item The index $r$ takes the values $1,\dots,m$, the index $s$ takes the values $1,\dots,n$. 

\item Given $r$ and $s$, the symbol $\de_{z_r,w_s}$ is the Kronecker delta: it equals $1$ if  $z_r=w_s$, and $0$ otherwise. 

\item We suppose that $d\ge1$ and $1\le i,j,k,l\le d$.  

\end{itemize}

\begin{proposition}\label{prop5.A}
The Poisson bracket for the generators in  $\YY_\dd$ has the form
\begin{equation}\label{eq5.A} 
\begin{aligned}
\{p_{ij}(z), p_{kl}(w)\}&=\sum_{r,s} \de_{z_r,w_s} \\
\times \bigg( &p_{kj}(w_1\dots w_{s-1}z_{r+1}\dots z_m)p_{il}(z_1\dots z_{r-1}\boxed{z_r} w_{s+1}\dots w_n) \\
-&p_{kj}(w_1\dots w_{s-1}\boxed{z_r} z_{r+1}\dots z_m) p_{il}(z_1\dots z_{r-1}w_{s+1}\dots w_n)\bigg).
\end{aligned}
\end{equation}
\end{proposition}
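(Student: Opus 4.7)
The plan is to exploit the identification $P_\dd\simeq\gr A_\dd$ from Proposition \ref{prop3.A}: the Poisson bracket on $P_\dd$ is the principal symbol of the commutator on $A_\dd$, so it suffices to choose a convenient lift of each generator, compute the commutator of the lifts in $U(\gl(N,\C)^\oL)$, and read off the degree-$(m+n-1)$ piece. For $N$ larger than $\max(\ell(z),\ell(w))$ the natural lift is
\begin{equation*}
t_{ij}(z;N):=\sum_{a_1,\dots,a_{m-1}=1}^N E_{ia_1|z_1}E_{a_1a_2|z_2}\cdots E_{a_{m-1}j|z_m}\in U(\gl(N,\C)^\oL),
\end{equation*}
and similarly $t_{kl}(w;N)$ with intermediate indices $b_1,\dots,b_{n-1}$ and endpoints $k,l$; the principal symbol of $t_{ij}(z;N)$ is $p_{ij}(z;N)$, and stability of the result in $N$ is automatic by the same mechanism as in Lemma \ref{lemma4.C1}.

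Next I would expand $[t_{ij}(z;N),t_{kl}(w;N)]$ by the Leibniz rule. Because the top-degree part has degree $m+n-1$, the matrix units may be treated as commuting in this piece, so the commutator collapses to
\begin{equation*}
\sum_{r=1}^{m}\sum_{s=1}^{n}\bigl(\text{remaining }E\text{'s of both products}\bigr)\cdot[E_{a_{r-1}a_r|z_r},E_{b_{s-1}b_s|w_s}],
\end{equation*}
where $a_0=i$, $a_m=j$, $b_0=k$, $b_n=l$. The basic commutation relation
\begin{equation*}
[E_{ab|\al},E_{cd|\be}]=\de_{\al\be}\bigl(\de_{bc}E_{ad|\al}-\de_{ad}E_{cb|\al}\bigr)
\end{equation*}
immediately produces the factor $\de_{z_r,w_s}$ and splits the $(r,s)$ summand into two contributions.

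For the contribution $\de_{a_r,b_{s-1}}E_{a_{r-1}b_s|z_r}$, carrying out the contraction $a_r=b_{s-1}$ inside the sum and tracing endpoints shows that the $z$-prefix matrix units, the merged element $E_{a_{r-1}b_s|z_r}$, and the $w$-suffix assemble into one continuous chain from $i$ to $l$ whose colors spell out $z_1\dots z_{r-1}\boxed{z_r}w_{s+1}\dots w_n$, giving the factor $p_{il}(z_1\dots z_{r-1}\boxed{z_r}w_{s+1}\dots w_n)$; meanwhile the $w$-prefix and $z$-suffix, joined through $b_{s-1}=a_r$, form the chain $p_{kj}(w_1\dots w_{s-1}z_{r+1}\dots z_m)$. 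For the second contribution $-\de_{b_s,a_{r-1}}E_{b_{s-1}a_r|z_r}$, the symmetric bookkeeping delivers $-p_{kj}(w_1\dots w_{s-1}\boxed{z_r}z_{r+1}\dots z_m)\,p_{il}(z_1\dots z_{r-1}w_{s+1}\dots w_n)$. Summing over $(r,s)$ yields \eqref{eq5.A}, and passing to the projective limit finishes the proof.

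The only real obstacle is the combinatorial bookkeeping needed to verify that the two index contractions realign the split products into exactly the two chains appearing in \eqref{eq5.A}, including the edge cases $r\in\{1,m\}$ and $s\in\{1,n\}$ where one of the prefixes or suffixes is empty. Conceptually nothing deep is required: the entire computation reduces to one application of the defining Lie bracket in $\gl(N,\C)^\oL$ followed by symbol extraction.
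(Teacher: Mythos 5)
Your proposal is correct and is essentially the paper's own argument: the paper likewise reduces to a finite-$N$ computation (working directly in the symmetric algebra $S(\gl(N,\C)^{\oplus L})$ with its canonical Poisson bracket, i.e.\ exactly the top-degree part of the commutator of the lifts you use), applies the Leibniz rule together with the bracket $[E_{aa'\mid\al},E_{bb'\mid\be}]=\de_{\al\be}(\de_{a'b}E_{ab'\mid\al}-\de_{b'a}E_{ba'\mid\al})$, and then passes to the projective limit, which is compatible with the Poisson structure. The only cosmetic difference is that you phrase the computation as ``commutator in $U(\gl(N,\C)^{\oplus L})$ followed by symbol extraction'' while the paper phrases it as the canonical Poisson bracket on the symmetric algebra; these are the same calculation.
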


Here (and below) the letter $z_r$ is framed in order to highlight the fact that the two expressions in the round brackets differ only by the location of this letter.   We also adopt the convention that
$$
p_{kj}(\varnothing)=\de_{kj}, \quad p_{il}(\varnothing)=\de_{il},
$$
where $\varnothing$ denotes the empty word. This convention is applied if $s=1$ and $r=m$, in which case the word $w_1\dots w_{s-1}z_{r+1}\dots z_m$ in $p_{kj}(\ccdot)$ is empty. Likewise, if $r=1$ and $s=n$, then the word $z_1\dots z_{r-1}w_{s+1}\dots w_n$ in $p_{il}(\ccdot)$ is empty. 

\begin{proof}
Recall that $P_\dd$ is the projective limit of the algebras $P_\dd(N)$. Each $P_\dd(N)$ is a subalgebra of the symmetric algebra 
\begin{equation}\label{eq5.E}
S(\gl(N,\C)^{\oplus L})\simeq \gr U(\gl(N,\C)^{\oplus L})
\end{equation}
and inherits the canonical Poisson structure of the latter algebra. The equality $P_\dd=\varprojlim P_\dd(N)$  holds in the category of Poisson algebras as well. Therefore,  it suffices to compute the Poisson bracket for the elements $p_{ij}(z;N)$ and $p_{kl}(w;N)$ inside the algebra \eqref{eq5.E} and show that is is given by a similar expression. 

By the definition \eqref{eq4.D}, our elements are sums of monomials built from matrix units in $\gl(N,\C)^{\oplus L}$:
\begin{equation}\label{eq5.F}
\begin{gathered}
p_{ij}(z;N)=\sum_{a_1,\dots,a_{m-1}=1}^N E_{i a_1\mid z_1} E_{a_1a_2\mid z_2}\dots E_{a_{m-1}j \mid z_m}, \\
p_{kl}(w;N)=\sum_{b_1,\dots,b_{n-1}=1}^N E_{k b_1\mid w_1} E_{b_1b_2\mid w_2}\dots E_{b_{n-1}l \mid w_n}.
\end{gathered}
\end{equation}
To compute the bracket for these elements we apply the Leibniz rule, which holds in an arbitrary commutative Poisson algebra:
$$
\{x_1\dots x_m, y_1\dots y_n\}=\sum_{r=1}^m\sum_{s=1}^n \{x_r,y_s\} x_1\dots x_{r-1} x_{r+1}\dots x_m y_1\dots y_{s-1} y_{s+1}\dots y_n.
$$
In our concrete situation, $\{x_r,y_s\}$ is the Poisson bracket for a pair of matrix units, which is the same as the bracket $[-,-]$ in the Lie algebra $\gl(N,\C)^{\oplus L}$: 
$$
\{E_{aa'\mid\al}, E_{bb'\mid \be}\}=[E_{aa'\mid\al}, E_{bb'\mid \be}]=\de_{\al,\be}\left(\de_{a'b}E_{ab'\mid\al}-\de_{b'a}E_{ba'\mid\al}\right), \quad \al,\be\in[L].
$$
Combining the last two formulas we obtain a multiple sum of the form
$$
\sum_{a_1,\dots,a_{m-1}=1}^N \; \sum_{b_1,\dots,b_{n-1}=1}^N\; \sum_{r,s} \de_{z_r,w_s}(\cdots)
$$
and then change the order of summation:  
$$
\sum_{r,s} \de_{z_r,w_s} \; \sum_{a_1,\dots,a_{m-1}=1}^N\;  \sum_{b_1,\dots,b_{n-1}=1}^N (\cdots). 
$$
Looking closely at the expression in $(\cdots)$ we see that it has the desired form  --- after summation over $a_1,\dots,a_{m-1}$ and $b_1,\dots,b_{n-1}$ we come to the exact analog of  \eqref{eq5.A} in the algebra $P_\dd(N)$.  

This completes the proof.
\end{proof}

The next two propositions are proved in the same way. 

\begin{proposition}\label{prop5.B}
The Poisson bracket between the generators of $\YY_\dd$ and $P_{0,d}$ has the form
\begin{equation}\label{eq5.B}
\begin{aligned}
\{p_{ij}(z), p(\ww)\}&=\sum_{r,s} \de_{z_r,w_s} \\
\times \bigg( &p_{ij}(z_1\dots z_{r-1}\boxed{z_r} w_{s+1}\dots w_nw_1\dots w_{s-1} z_{r+1}\dots z_m) \\
-&p_{ij}(z_1\dots z_{r-1}w_{s+1}\dots w_nw_1\dots w_{s-1}\boxed{z_r} z_{r+1}\dots z_m)\bigg),
\end{aligned}
\end{equation}
where $\ww\in \CW_L$ and $w\in\W_L$ is an arbitrary representative of $\ww$.  
\end{proposition}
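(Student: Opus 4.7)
The plan is to follow the strategy of Proposition \ref{prop5.A} very closely, since the only new feature is that one of the two factors is a \emph{cyclic} trace-like element rather than an open one. As in that proposition, the equality $P_\dd = \varprojlim P_\dd(N)$ holds in the category of Poisson algebras (the Poisson bracket on each $P_\dd(N)$ being inherited from the canonical bracket on $S(\gl(N,\C)^{\oplus L})$), so it suffices to verify the identity at the level of the algebras $P_\dd(N)$ for all $N\ge d$. Fix such an $N$ and expand
$$
p_{ij}(z;N)=\sum_{a_1,\dots,a_{m-1}} E_{ia_1\mid z_1}E_{a_1a_2\mid z_2}\cdots E_{a_{m-1}j\mid z_m}, \qquad p(w;N)=\sum_{b_1,\dots,b_n} E_{b_1b_2\mid w_1}\cdots E_{b_n b_1\mid w_n},
$$
where it is convenient to write the cyclic word by setting $b_{n+1}:=b_1$, $a_0:=i$, $a_m:=j$.

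Next I apply the Leibniz rule to the product of these two sums of monomials. Each pair $(r,s)$ with $1\le r\le m$, $1\le s\le n$ contributes a term in which the Poisson bracket acts between the $r$-th factor of $p_{ij}(z;N)$ and the $s$-th factor of $p(w;N)$, while all other matrix units multiply in the commutative symmetric algebra. By the matrix-unit bracket formula used in Proposition \ref{prop5.A},
$$
\{E_{a_{r-1}a_r\mid z_r},\,E_{b_s b_{s+1}\mid w_s}\}=\de_{z_r,w_s}\bigl(\de_{a_r,b_s}E_{a_{r-1}b_{s+1}\mid z_r}-\de_{b_{s+1},a_{r-1}}E_{b_s a_r\mid z_r}\bigr),
$$
so the pair $(r,s)$ contributes only if $z_r=w_s$, and then produces two terms with opposite signs.

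The key bookkeeping is the identification of the chain of matrix indices in each of these two terms after the Kronecker deltas are imposed. In the first term one sets $a_r=b_s$, and the surviving indices link up along the path
$$
i\to a_1\to\cdots\to a_{r-1}\to b_{s+1}\to\cdots\to b_n\to b_1\to\cdots\to b_{s-1}\to b_s\to a_{r+1}\to\cdots\to j,
$$
the corresponding sequence of letters being $z_1\cdots z_{r-1}\boxed{z_r}w_{s+1}\cdots w_n w_1\cdots w_{s-1}z_{r+1}\cdots z_m$; after summation over all free indices one obtains exactly $p_{ij}(\cdot)$ of this word. In the second term one sets $b_{s+1}=a_{r-1}$, and an analogous chasing shows that the chain now carries the letter $\boxed{z_r}$ at the position between $w_{s-1}$ and $z_{r+1}$, yielding the second summand of \eqref{eq5.B} with the opposite sign. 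Reassembling the sums over $(r,s)$ gives precisely \eqref{eq5.B} inside $P_\dd(N)$, and passing to the projective limit completes the proof.

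The main conceptual point — and the only step that might look like an obstacle — is checking that the right-hand side of \eqref{eq5.B} is independent of the chosen representative $w$ of $\ww$, so that the formula makes sense. This is automatic from the derivation: a cyclic shift $w\mapsto w_k\cdots w_n w_1\cdots w_{k-1}$ induces the corresponding shift $s\mapsto s-k+1\pmod n$ in the summation, and one checks that each $(r,s)$-summand is carried exactly to a $(r,s')$-summand of the shifted expression, producing the same total. Everything else is the routine combinatorial bookkeeping of open versus cyclic words, entirely analogous to the computation already carried out in Proposition \ref{prop5.A}.
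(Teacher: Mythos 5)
Your proposal is correct and follows essentially the same route as the paper, which proves Proposition \ref{prop5.B} "in the same way" as Proposition \ref{prop5.A}: reduction to $P_\dd(N)$ with its inherited Poisson structure, expansion into monomials of matrix units, the Leibniz rule together with the bracket $\{E_{aa'\mid\al},E_{bb'\mid\be}\}=\de_{\al\be}(\de_{a'b}E_{ab'\mid\al}-\de_{b'a}E_{ba'\mid\al})$, and the index chasing that reconnects the open chain with the cyclic one. Your additional check that the result is independent of the representative of $\ww$ matches the remark the paper makes right after the proposition.
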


Note that the right-hand side does not depend on the choice of $w$. 

\begin{proposition}\label{prop5.C}
The Poisson bracket for the generators in $P_{0,L}$ has the form
\begin{equation}\label{eq5.C}
\begin{aligned}
\{p(\wh z), p(\wh w)\}&=\sum_{r,s} \de_{z_r,w_s} \\
\times \bigg( &p(z_1\dots z_{r-1}\boxed{z_r} w_{s+1}\dots w_nw_1\dots w_{s-1} z_{r+1}\dots z_m) \\
-&p(z_1\dots z_{r-1}w_{s+1}\dots w_nw_1\dots w_{s-1}\boxed{z_r} z_{r+1}\dots z_m)\bigg),
\end{aligned}
\end{equation}
where $\wh z, \ww\in\CW_L$ and $z,w$ are their representatives in $\W_L$. 
\end{proposition}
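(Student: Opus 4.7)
The plan is to follow exactly the same scheme as in the proof of Proposition \ref{prop5.A}: reduce the computation to the finite-$N$ level inside $P_{0,L}(N)\subset S(\gl(N,\C)^\oL)$, whose Poisson structure comes from the canonical identification with $\gr U(\gl(N,\C)^\oL)$. Since $P_{0,L}=\varprojlim P_{0,L}(N)$ as a Poisson algebra, and each generator is determined by its finite-level avatars
\begin{equation*}
p(\wh z;N)=\sum_{a_1,\dots,a_m=1}^N E_{a_1a_2\mid z_1}\dots E_{a_ma_1\mid z_m},\qquad p(\wh w;N)=\sum_{b_1,\dots,b_n=1}^N E_{b_1b_2\mid w_1}\dots E_{b_nb_1\mid w_n}
\end{equation*}
(with the cyclic conventions $a_{m+1}=a_1$ and $b_{n+1}=b_1$), it suffices to compute the bracket $\{p(\wh z;N),p(\wh w;N)\}$ inside $S(\gl(N,\C)^\oL)$ and check that the result coincides with the finite-$N$ analogue of \eqref{eq5.C}.

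Next I would apply the Leibniz rule, obtaining a double sum over $(r,s)\in\{1,\dots,m\}\times\{1,\dots,n\}$ in which each term consists of the Poisson bracket of one pair of matrix units multiplied by the remaining $m+n-2$ factors. Inserting the formula
\begin{equation*}
\{E_{a_ra_{r+1}\mid z_r},E_{b_sb_{s+1}\mid w_s}\}=\de_{z_r,w_s}\bigl(\de_{a_{r+1}b_s}E_{a_rb_{s+1}\mid z_r}-\de_{b_{s+1}a_r}E_{b_sa_{r+1}\mid z_r}\bigr)
\end{equation*}
produces two Kronecker deltas that splice the two originally disjoint index cycles $(a_1,\dots,a_m)$ and $(b_1,\dots,b_n)$ into a single cycle of length $m+n-1$. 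A direct index chase then shows that the first term above reassembles into a cyclic product of matrix units whose sequence of letters is $z_1\dots z_{r-1}\,z_r\,w_{s+1}\dots w_n w_1\dots w_{s-1}\,z_{r+1}\dots z_m$, while the second yields $z_1\dots z_{r-1}\,w_{s+1}\dots w_n w_1\dots w_{s-1}\,z_r\,z_{r+1}\dots z_m$. Summing over the remaining free indices gives precisely the two $p(\ccdot\,;N)$ terms on the right-hand side of \eqref{eq5.C}.

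The main combinatorial point — and the place where bookkeeping must be done carefully — is the splicing step itself: one has to verify that after the two Kronecker deltas are used, the remaining matrix units really do form a single cyclic product of the claimed length, with letters appearing in precisely the claimed cyclic order. Once this is checked, independence of the right-hand side from the particular representatives $z,w$ of the circular words $\wh z,\wh w$ is automatic, because a cyclic shift of a representative corresponds to a reindexing of the summation that merely permutes the summands. Finally, the passage to the limit $N\to\infty$ is automatic, because the Poisson bracket is compatible with the projections $\pi_{N,N-1}:P_{0,L}(N)\to P_{0,L}(N-1)$ and all the elements involved are given by consistent sequences across varying $N$.
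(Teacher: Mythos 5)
Your proposal is correct and follows essentially the same route as the paper: the paper proves Proposition \ref{prop5.C} by the very argument you outline, namely the one given for Proposition \ref{prop5.A} (finite-$N$ computation in $S(\gl(N,\C)^\oL)$ via the Leibniz rule and the bracket of matrix units, then compatibility with the projections $\pi_{N,N-1}$), with the only difference being the cyclic splicing bookkeeping that you correctly identify and carry out. Nothing further is needed.
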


Note again that the right-hand side does not depend on the choice of the representatives. 

\begin{corollary}
$\YY_\dd$ is a Poisson subalgebra in $P_\dd$.
\end{corollary}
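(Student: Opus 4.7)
The plan is to reduce the statement to a computation on generators and then read off the answer directly from Proposition \ref{prop5.A}.

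First I would recall that the Poisson bracket $\{-,-\}$ on $P_\dd$ is a biderivation: it satisfies the Leibniz rule in each argument. Since $\YY_\dd$ is by definition the unital subalgebra of $P_\dd$ generated by the elements $p_{ij}(w)$ with $1\le i,j\le d$ and $w\in W_L$, showing that $\YY_\dd$ is closed under $\{-,-\}$ reduces, via the Leibniz rule, to showing that $\{p_{ij}(z),p_{kl}(w)\}\in \YY_\dd$ for every pair of generators.

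Next I would invoke Proposition \ref{prop5.A}, which gives the explicit formula \eqref{eq5.A} for $\{p_{ij}(z),p_{kl}(w)\}$. Each summand on the right-hand side is a product of two factors, each of the form $p_{ab}(u)$ with $1\le a,b\le d$ and $u$ a (possibly empty) word in the alphabet $[L]$. When $u$ is nonempty, the corresponding factor is by definition a generator of $\YY_\dd$. When $u$ is empty (which happens only for the boundary index choices $s=1,\,r=m$ or $r=1,\,s=n$), the convention $p_{ab}(\varnothing)=\de_{ab}$ turns that factor into a scalar, which lies in the unital subalgebra $\YY_\dd$. Thus every summand of \eqref{eq5.A} is an element of $\YY_\dd$, hence so is the full sum.

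Combining the two steps, $\{f,g\}\in\YY_\dd$ for all generators $f,g$ of $\YY_\dd$, and therefore (by the Leibniz rule applied inductively to monomials in the generators) for all $f,g\in\YY_\dd$. This proves that $\YY_\dd$ is a Poisson subalgebra of $P_\dd$. There is no real obstacle here: the only point worth emphasizing is the empty-word convention, which guarantees that the degenerate boundary terms in \eqref{eq5.A} remain inside $\YY_\dd$ rather than producing something outside the subalgebra.
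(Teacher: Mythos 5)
Your proposal is correct and follows essentially the same route as the paper, which simply observes that the corollary is a direct consequence of Proposition \ref{prop5.A}: the right-hand side of \eqref{eq5.A} is visibly built from the generators $p_{ab}(u)$ with $1\le a,b\le d$ (with the empty-word convention giving scalars), and the Leibniz rule reduces the general case to generators. You have merely spelled out the details the paper leaves implicit.
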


Indeed, this is a direct consequence of Proposition \ref{prop5.A}. 

\subsection{The Poisson algebra $\YY_\dd$ and the KKS double bracket}\label{sect5.2}

The goal of this subsection is to relate the Poisson bracket \eqref{eq5.A} to a general construction from Van den Bergh's paper \cite{vdB}. Recall its definition. 

Let $\Aa$ be a finitely generated unital associative algebra over $\C$. For $N\in\Z_{\ge1}$, let $\Rep(\Aa,N)$ denote the set of $N$-dimensional representations $T: \Aa\to\End\C^N$. It is an affine algebraic variety. Associated to it is a commutative unital associative algebra $\mathcal O(\Rep(\Aa,N))$, whose maximal ideals are the points $T\in\Rep(\Aa,N)$. Namely, $\mathcal O(\Rep(\Aa,N))$ is generated by the symbols $x_{ij}$, where $x\in \Aa$ and $1\le i,j\le N$, subject to two sorts of relations: 

\begin{itemize}
\item[(i)] $x_{ij}$ is linear in $x$; 

\item[(ii)] for any $x,y\in\Aa$ and $1\le i,j\le N$, 
\begin{equation}\label{eq5.G2}
(xy)_{ij}=\sum_{a=1}^N x_{ia} y_{aj}.
\end{equation}
\end{itemize}
Note also that we tacitly assume that the representations $T$ are nondegenerate, meaning that $T(1)=1$. In accordance to this we also set $1_{ij}:=\de_{ij}1$

To each generator $x_{ij}$ there corresponds a regular function on $\Rep(\Aa,N)$ given by 
$$
x_{ij}(T):=(T(x))_{ij}, \quad T\in\Rep(\Aa,N),
$$
where $(T(x))_{ij}$ is the $(i,j)$-th entry of the $N\times N$ matrix $T(x)$. 

Assume we are given a \emph{double Poisson bracket} on $\Aa$: it is a bilinear map
\begin{equation}\label{eq5.G1}
\bl -,-\br:\; \Aa \times \Aa\to \Aa\otimes\Aa
\end{equation}
satisfying three conditions that are some variations of skew-symmetry, Leibniz rule, and Jacobi identity (\cite[Definitions 2.2.1 and 2.3.2]{vdB}). Then, for each $N\in\Z_{\ge1}$, there exists a unique structure of Poisson algebra on $\mathcal O(\Rep(\Aa,N))$, such that for the corresponding Poisson bracket $\{ -,-\}$ one has
\begin{equation}\label{eq5.G3}
\{x_{ij}, y_{kl}\}=\bl x,y\br'_{kj} \bl x,y\br''_{il}, \qquad x,y\in\Aa, \quad 1\le i,j,k,l\le N,
\end{equation}
where the Sweedler sumless notation is used: if $X$ is an element of $\Aa\otimes\Aa$, then $X=X'\otimes X''$ means a decomposition $X_r=\sum X'_r\otimes X''_r$; here it is applied to $X:=\bl x,y\br$. 
(See \cite{vdB}, Proposition 1.2 and its proof in \S7.5.)

Now we take as $\Aa$ the free associative algebra with $L$ generators; we denote it by $\FF_L$. We associate the generators of $\FF_L$ to the letters of our alphabet $[L]$. Then the words $z\in\W_L$ can be treated as noncommutative monomials, which form (together with the unit) a basis of $\FF_L$. 

Next, we specify a double Poisson bracket $\bl -,-\br$ on $\FF_L$: it is the \emph{Kirillov-Kostant-Souriau bracket} $\Pi_{\textrm{KKS}}$ investigated in the paper \cite{AKKN} by Alekseev, Kawazumi, Kuno, and Naef. The definition is the following (\cite[section 4.3]{AKKN}):  for two words, 
$$
z=z_1\dots z_m \quad \text{and} \quad w=w_1\dots w_n,
$$
one has
\begin{equation}\label{eq5.A1} 
\begin{aligned}
\bl z, w\br&=\Pi_{\textrm{KKS}}(z,w):=\sum_{r,s} \de_{z_r,w_s} \\
&\times \bigg((w_1\dots w_{s-1}z_{r+1}\dots z_m)\otimes(z_1\dots z_{r-1}\boxed{z_r} w_{s+1}\dots w_n) \\
&-(w_1\dots w_{s-1}\boxed{z_r} z_{r+1}\dots z_m)\otimes(z_1\dots z_{r-1}w_{s+1}\dots w_n)\bigg),
\end{aligned}
\end{equation}
Here, as in \eqref{eq5.A}, the frame around $z_r$ is used to highlight the position of this letter; an empty word (which arises when $(r,s)$ equals $(m,1)$ or $(1,n)$) should be interpreted as $1$. 
 
Substituting \eqref{eq5.A1} into \eqref{eq5.G3} we see that the resulting Poisson bracket in $\mathcal O(\Rep(\FF_L,N))$ has the same form as the Poisson bracket \eqref{eq5.A} in $\YY_\dd$. A precise claim can be stated as follows. 

Observe that for each $N\ge d$, there is a natural algebra morphism 
$$
\varphi_N: \YY_\dd\to \mathcal O(\Rep(\FF_L,N)),
$$ 
such that
$$
\varphi_N(p_{ij}(z))=z_{ij}, \qquad z\in \W_L, \quad 1\le i,j\le d.
$$
The claim is that $\varphi_N$ is compatible with the Poisson structures. 

Note that the generators $p_{ij}(z)\in \YY_\dd$ are algebraically independent while the elements $z_{ij}\in \mathcal O(\Rep(\FF_L,N))$ obey a system of relations, see \eqref{eq5.G2}. However, there is a ``liberation effect'' as $N\to\infty$: the kernel of $\varphi_N$ shrinks to $\{0\}$, as it is seen from Lemma \ref{lemma4.B1}. In this sense one can say that the Poisson bracket \eqref{eq5.A} fits into the general scheme \eqref{eq5.G3}.  

\begin{remark}
1. A double Poisson bracket is uniquely determined by its values on any set of generators (see Van den Bergh \cite[(2.5)]{vdB}). In the case of the KKS bracket \eqref{eq5.A1}, the values on the generators $\al\in[L]$ of the algebra $\FF_L$ are given by a very simple formula
\begin{equation*}
\bl \al,\al\br=1\otimes\al-\al\otimes 1; \qquad \bl \al,\be\br=0, \quad \al\ne\be.
\end{equation*}
The existence of a double Poisson bracket on $\FF_L$ given by this formula immediately follows from \cite[Example 2.3.3 and Proposition 2.5.1]{vdB}. In this sense one can say that the KKS bracket was present already in the foundational paper \cite{vdB}. 

2. Pichereau and Van de Weyer \cite{PW} classified all double Poisson brackets on $\FF_L$ for which the values on the generators have the  form  
\begin{equation}\label{eq5.H}
\bl \al,\be\br=\sum_{\ga\in[L]}\left(c^\ga_{\al\be} \ga\otimes 1-c^\ga_{\be\al} 1\otimes \ga\right), \qquad \al,\be\in[L], \quad c^\ga_{\al\be}\in\C.
\end{equation}
As shown in \cite[sect. 5, Proposition 10]{PW}, formula \eqref{eq5.H} gives rise to a double Poisson bracket if and only if the parameters $c^\ga_{\al\be}$ serve as structure constants for an associative multiplication on the coordinate vector space $\C^L$. Thus, the KKS bracket enters a large family of double Poisson brackets. 
\end{remark}

\subsection{The Lie algebra $\g_\dd$ and the degeneration $\YY_\dd\to S(\g_\dd)$}\label{sect5.3}
Denote by $\C[\W_L]$ the complex vector space formed by the finite formal linear combinations of the words $z\in\W_L$. The words  $z\in\W_L$ constitute a distinguished basis of this vector space. We introduce a bilinear operation 
$$
\C[\W_L]\times\C[\W_L] \to \C[\W_L], 
$$
denoted by the symbol ``$\odot$'' and defined on the basis elements as follows. For two words $z=z_1\dots z_m$ and $w=w_1\dots w_n$, \begin{equation}\label{eq5.A4}
z\odot w:=\begin{cases} z_1\dots z_{m-1}\boxed{z_m} w_2\dots w_n, & \text{if $z_m=w_1$}.\\
0, & \text{otherwise}
\end{cases}
\end{equation} 
Here, as before, the letter $z_m$ is framed only to emphasize its special role. This ``pseudo-concatenation'' operation is obviously associative and hence it turns  $\C[\W_L]$ into an associative algebra. Let us denote this algebra by $\Al_L$.  

Next, consider the tensor product $\End(\C^d)\otimes \Al_L$.  It has a natural basis formed by the elements
$$
z_{ij}:=E_{ij}\otimes z, \qquad 1\le i,j\le d, \quad z\in\W_L.
$$
The space $\End(\C^d)\otimes \Al_L$ is an associative algebra, too; it can be interpreted as the algebra of $d\times d$ matrices with the entries in $\Al_L$. But we regard it as a Lie algebra, with the bracket given by the commutator. We denote this Lie algebra by $\g_\dd$. 

On the basis elements, the bracket in $\g_\dd$ has the form
\begin{equation}\label{eq5.A5}
[z_{ij},w_{kl}]=\de_{kj}\,(z\odot w)_{il}-\de_{il}\,(w\odot z)_{kj},  \qquad 1\le i,j,k,l\le d, \quad z,w\in\W_L,
\end{equation}
with the convention that $(z\odot w)_{il}=0$ if $z\odot w=0$ and, likewise, $(w\odot z)_{kj}=0$ if $w\odot z=0$. 

Finally, we consider the symmetric algebra $S(\g_\dd)$ with its canonical Poisson structure determined by the bracket \eqref{eq5.A5} in the Lie algebra $\g_\dd$.

\begin{proposition}
The Poisson algebra $\YY_\dd$ is a deformation of the Poisson algebra $S(\g_\dd)$. Or, put it another way, the latter algebra is a degeneration of the former one.
\end{proposition}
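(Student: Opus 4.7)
The plan is to construct an explicit one-parameter family of Poisson brackets on a fixed commutative polynomial algebra, interpolating between those of $\YY_\dd$ and $S(\g_\dd)$. By Theorem \ref{thm4.B}, $\YY_\dd$ is freely generated as a commutative algebra by the elements $p_{ij}(z)$ with $1\le i,j\le d$ and $z\in\W_L$, while $S(\g_\dd)$ is freely generated by the basis $z_{ij}$ of $\g_\dd$. Hence the assignment $p_{ij}(z)\mapsto z_{ij}$ extends uniquely to a commutative algebra isomorphism $\phi:\YY_\dd\xrightarrow{\sim}S(\g_\dd)$; only the Poisson structures need to be compared.

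First I would decompose the right-hand side of \eqref{eq5.A} as $\{p_{ij}(z), p_{kl}(w)\} = L + Q$, where $L$ collects those summands in which one factor reduces to a Kronecker delta through the empty-word convention $p_{?}(\varnothing) = \de_{?,?}$, and $Q$ collects the remaining summands, which are genuinely bilinear in the generators. A direct inspection of \eqref{eq5.A} shows that only the corner values $(r,s)=(m,1)$ in the first line and $(r,s)=(1,n)$ in the second line can produce an empty word (the two factors containing the boxed letter $z_r$ are never empty), and computing these corner contributions gives
$$
L\bigl(p_{ij}(z), p_{kl}(w)\bigr) = \de_{kj}\,p_{il}(z\odot w) - \de_{il}\,p_{kj}(w\odot z),
$$
with the convention that the right-hand side is interpreted as $0$ when the corresponding pseudo-concatenation vanishes. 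Under $\phi$, this matches exactly the Lie-Poisson bracket on $S(\g_\dd)$ coming from \eqref{eq5.A5}.

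Next I would introduce the interpolating family. For $\la\in\C^\times$, let $\sigma_\la:\YY_\dd\to\YY_\dd$ be the commutative algebra automorphism determined by $\sigma_\la(p_{ij}(z)) := \la\,p_{ij}(z)$ on every generator, and set
$$
\{a,b\}^{(\la)} := \la^{-1}\,\sigma_\la^{-1}\bigl\{\sigma_\la(a),\sigma_\la(b)\bigr\}.
$$
By construction, $\{-,-\}^{(\la)}$ satisfies the Jacobi identity for every $\la\in\C^\times$, as it is obtained from a Poisson bracket by conjugation and scaling. A bilinearity calculation on generators gives $\{p_{ij}(z), p_{kl}(w)\}^{(\la)} = L + \la^{-1}Q$. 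Setting $\hbar:=\la^{-1}$ and extending $\C[\hbar]$-linearly, the bracket $\{-,-\}_\hbar := L + \hbar Q$ turns $\YY_\dd\otimes\C[\hbar]$ into a Poisson algebra over $\C[\hbar]$ for $\hbar\ne 0$; since the Jacobiator evaluated on any triple of generators is polynomial in $\hbar$ and vanishes on $\hbar\ne 0$, it vanishes identically, so Jacobi persists at $\hbar=0$. This gives a flat family of Poisson algebras whose fiber at $\hbar=1$ is $\YY_\dd$ and whose fiber at $\hbar=0$ is, via $\phi$, the Poisson algebra $S(\g_\dd)$, which proves the proposition.

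The main obstacle, though modest, is the combinatorial bookkeeping in the second step: one must verify that the empty-word conventions in \eqref{eq5.A} reproduce the Lie bracket \eqref{eq5.A5} precisely, paying attention to the degenerate case $\ell(z)=\ell(w)=1$ in which the corner indices $(m,1)$ and $(1,n)$ coincide, to confirm that no double counting occurs. Beyond this, the construction is routine: the rescaling $\sigma_\la$ automatically preserves Jacobi, and flatness over $\C[\hbar]$ is immediate from the freeness of $\YY_\dd$ as a commutative algebra.
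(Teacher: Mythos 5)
Your proposal is correct and follows essentially the same route as the paper: split the bracket \eqref{eq5.A} into its linear (corner) part, identified with the Lie--Poisson bracket \eqref{eq5.A5} of $\g_\dd$, and its quadratic part, then kill the quadratic part by a rescaling and a limit $\hbar\to 0$. Your uniform rescaling $\sigma_\la$ combined with the overall factor $\la^{-1}$ produces exactly the same one-parameter family $L+\hbar Q$ as the paper's length-graded automorphism $\phi_h$ (the bracket having degree $-1$ makes the two rescalings equivalent), so the difference is purely cosmetic.
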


The precise meaning of this statement is the following. We are going to define a family of Poisson brackets in $\YY_\dd$ indexed by the deformation parameter $h\in\C^*$. The initial bracket corresponds to $h=1$, while in the limit $h\to0$ we get a Poisson algebra isomorphic to the Poisson algebra $S(\g_\dd)$.  

\begin{proof}
Examine the expression \eqref{eq5.A} for the bracket in $\YY_\dd$. The sum on the right-hand side contains linear and quadratic terms. The linear terms arise in two extreme cases: when $(r,s)=(m,1)$ and when $(r,s)=(1,n)$. All remaining summands are quadratic. The idea is that one can kill the quadratic terms by means of a renormalization and a limit transition.  
 
Let $\phi_h$ be the automorphism of the algebra $\YY_\dd$ defined on the generators by 
$$
\phi_h(p_{ij}(z)):=h^{\ell(z)-1}p_{ij}(z), \qquad 1\le i,j\le d, \quad z\in\W_L
$$
(recall that $\ell(z)$ is the length of $z$). The $h$-deformed Poisson bracket in $\YY_\dd$ is defined by 
$$
\{x,y\mid h\}:=\phi_h(\{\phi_h^{-1}(x), \phi_h^{-1}(y)\}), \qquad x,y\in \YY_\dd.
$$
From \eqref{eq5.A} and the definition of $\phi_h$ it follows that 
\begin{equation}\label{eq5.A3}
\{p_{ij}(z), p_{kl}(w)\mid h\}=\{p_{ij}(z), p_{kl}(w)\}_1+h\{p_{ij}(z), p_{kl}(w)\}_2,
\end{equation}
where
\begin{equation}\label{eq5.A2}
\begin{aligned}
\{p_{ij}(z), p_{kl}(w)\}_1=&\de_{z_m,w_1}\de_{kj}p_{il}(z_1\dots z_{m-1}\boxed{z_m}w_2\dots w_n)\\
-&\de_{z_1,w_n}\de_{il}p_{kj}(w_1\dots w_{n-1}\boxed{z_1}w_2\dots z_m)
\end{aligned}
\end{equation}
and 
$$
\begin{aligned}
\{p_{ij}(z), p_{kl}(w)\}_2&=\sum_{(r,s)\ne(m,1)} \de_{z_r,w_s} p_{kj}(w_1\dots w_{s-1}z_{r+1}\dots z_m)p_{il}(z_1\dots z_{r-1}\boxed{z_r} w_{s+1}\dots w_n) \\
&-\sum_{(r,s)\ne(1,n)} \de_{z_r,w_s} p_{kj}(w_1\dots w_{s-1}\boxed{z_r} z_{r+1}\dots z_m) p_{il}(z_1\dots z_{r-1}w_{s+1}\dots w_n).
\end{aligned}
$$

Thus, $\{-,-\}_1$ assembles the linear summands while $\{-,-\}_2$ assembles the quadratic summands. In the limit as $h\to0$, the quadratic part is killed and we are left with the linear part, given by \eqref{eq5.A2}.  Let us change the notation:
$$
p_{ij}(z) \rightsquigarrow z_{ij}, \quad p_{kl}(w) \rightsquigarrow w_{kl}.
$$
Then \eqref{eq5.A2} turns into \eqref{eq5.A5}, the Poisson bracket in $S(\g_\dd)$.  
\end{proof}

\begin{remark}
1. From the above proof it is seen that the deformation is flat.

2. In the simplest case $L=1$, the nonempty words are indexed by the strictly positive integers and the pseudo-concatenation \eqref{eq5.A4} looks as
$$
m\odot n:=m+n-1, \qquad m,n\in\Z_{\ge1}.
$$
After the reparametrization $m\to m-1$, $n\to n-1$,  it reduces simply to the operation of addition in $\Z_{\ge0}$ or, put it another way, to multiplication of monomials in $\C[x]$. From this it is seen that the algebra $\Al_1$ is isomorphic to the algebra $\C[x]$ of polynomials and so the Lie algebra $\g_{d,1}$ is nothing else than the polynomial current Lie algebra $\gl(d,\C[x])$. This agrees with the well-known fact that the Yangian $Y_d$ is a flat deformation of $U(\gl(d,\C[x]))$, see \cite[sections 1.26--1.27]{MNO}. 

3. For $L\ge2$, the algebra $\Al_L$ has some resemblance with the free algebra $\FF_L$, but $\Al_L$ has zero divisors, in contrast to $\FF_L$. 

4. The algebra $\Al_L$ has the unit: it is the sum of all letters from $[L]$.

5. A broader family of algebras, including the algebras $\Al_L$, is considered in \cite[\S6]{OS1}. 

\end{remark}

\subsection{The Poisson algebra $P_{0,L}$ and the necklace Lie algebra $\Lie_\LL$}\label{sect5.4}

Recall that $P_{0,L}$ is freely generated, as a unital commutative algebra, by the elements $p(\ww)$, $\ww\in\CW_L$. As in section \ref{sect1.3.2}, we denote by  $\Lie_\LL\subset P_{0,\LL}$ the linear span of  these elements. Then  the algebra $P_{0,L}$ can be identified with the symmetric algebra $S(\Lie_L)$. 

From the formula \eqref{eq5.C} for $\{p(\wh z), p(\ww)\}$ it follows:

\begin{corollary}\label{cor5.B}
$\Lie_\LL$ is closed under the Poisson bracket \eqref{eq5.C} and hence is  a  Lie algebra. Thus, $S(\Lie_L)$ is a Poisson algebra and the identification $P_{0,L}=S(\Lie_L)$ is also an isomorphism of Poisson algebras. 
\end{corollary}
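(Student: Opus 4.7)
The proof is essentially by inspection of formula \eqref{eq5.C}, followed by a soft argument extending the bracket from generators to the full symmetric algebra. Here is the plan.

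First, I would simply observe that the right-hand side of \eqref{eq5.C} is a sum of terms, each of which is a single generator $p(\ccdot)$ evaluated on some concatenation of subwords of $z$ and $w$. Crucially, unlike the bracket \eqref{eq5.A} between two $p_{ij}$-generators, no \emph{product} of two generators appears. Hence $\{p(\wh z), p(\ww)\}$ lies in the linear span $\Lie_\LL$ of the generators $p(\ww)$, which establishes closure. Skew-symmetry and the Jacobi identity for the restricted bracket on $\Lie_\LL$ are inherited from the ambient Poisson algebra $P_{0,\LL}$, so $\Lie_\LL$ becomes a Lie algebra.

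Next, by Theorem \ref{thm4.A} the algebra $P_{0,\LL}$ is the free unital commutative algebra on the basis $\{p(\ww): \ww\in\CW_L\}$ of $\Lie_\LL$, so as a commutative algebra $P_{0,\LL}$ is canonically identified with $S(\Lie_\LL)$. It remains to check that this identification matches the Poisson structures. The symmetric algebra $S(\Lie_\LL)$ carries a canonical Poisson bracket $\{-,-\}_{\mathrm{can}}$ uniquely determined by two properties: its restriction to $\Lie_\LL$ equals the Lie bracket of $\Lie_\LL$, and it satisfies the Leibniz rule. The Poisson bracket on $P_{0,\LL}$ inherited from the filtered algebra $A_{0,\LL}$ obviously satisfies Leibniz, and its restriction to $\Lie_\LL$ is, by definition, the very Lie bracket that we just installed on $\Lie_\LL$. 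Since a Poisson bracket on a free commutative algebra is determined by its values on a generating set via bilinearity and the Leibniz rule, the two brackets coincide on all of $P_{0,\LL}=S(\Lie_\LL)$.

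There is no real obstacle here: once formula \eqref{eq5.C} is in hand, the argument is a short bookkeeping exercise. The only point that deserves a moment's care is the last step --- namely, the uniqueness of a Poisson extension from $\Lie_\LL$ to $S(\Lie_\LL)$ via Leibniz --- but this is a standard fact about Poisson structures on symmetric algebras and requires nothing beyond writing a general element of $P_{0,\LL}$ as a polynomial in the $p(\ww)$'s and expanding both brackets by Leibniz.
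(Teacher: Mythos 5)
Your proposal is correct and coincides with the paper's (essentially one-line) argument: the paper derives Corollary \ref{cor5.B} directly from formula \eqref{eq5.C}, noting that the bracket of two generators $p(\wh z),p(\ww)$ is a linear combination of generators (no quadratic terms, unlike \eqref{eq5.A}), so $\Lie_\LL$ is closed, and the identification $P_{0,L}=S(\Lie_L)$ from Theorem \ref{thm4.A} is automatically Poisson since both brackets satisfy Leibniz and agree on generators. Your write-up just makes these standard steps explicit.
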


As pointed out in section \ref{sect1.3.2}, the Lie algebra $\Lie_L$ is an example of a necklace Lie algebra. Let us explain this point in more detail.

Let $Q$ be a quiver and $\Q$ (the \emph{double quiver}) be obtained by adjoining to every arrow of $Q$ its copy going in the opposite direction. 
The \emph{necklace Lie algebra} $\Lie(Q)$ associated with $Q$ is a graded Lie algebra with a homogeneous basis indexed by the oriented cycles in  $\Q$. The bracket is defined by a cut-and-glue operation over cycles. See Ginzburg \cite{Gin}, Bocklandt -- Le Bruyn \cite{BLB} for a precise definition. 

Now let $Q_L$ be the quiver with $L+1$ vertices enumerated by $0,1,\dots,L$ and $L$ arrows directed from $0$ to $1,\dots,L$. Due to the special star-shaped form of $Q_L$, there is a natural bijective correspondence between the oriented cycles in $\Q_L$ (which all have even length) and circular words in the alphabet $[L]=\{1,\dots,L\}$ (which are twice as short). Using this correspondence and comparing formula \eqref{eq5.C} with the definition of the bracket given in the foundational papers \cite{Gin}, \cite{BLB}, one sees that $\Lie_L$ is isomorphic to the necklace Lie algebra $\Lie(Q_L)$.

\subsection{Another interpretation of the bracket in $P_{0,L}$}\label{sect5.5}

The bracket  \eqref{eq5.C} was also investigated in the paper \cite{AKKN} by Alekseev,  Kawazumi, Kuno, and Naef.

We adopt from \cite{AKKN} the notation $\vert \FF_L\vert$ for the quotient space $\FF_\LL/[\FF_\LL,\FF_\LL]$.\footnote{The paper \cite{AKKN} deals with the degree completion of the free algebra, but for our discussion it does not matter.} Consider the linear map 
\begin{equation}\label{eq5.D}
\bl-,-\br:\; \FF_\LL\otimes\FF_\LL\to \FF_\LL\otimes\FF_\LL
\end{equation}
given by the double bracket \eqref{eq5.A1}.  By virtue of  \cite[Corollary 2.4.6]{vdB}, the composition of \eqref{eq5.D} with  the multiplication map $\FF_\LL\otimes\FF_\LL\to \FF_\LL$ induces a Lie algebra structure on $\vert\FF_L\vert$. 

On the other hand, there is a natural identification of vector spaces $\vert \FF_L\vert=\Lie_L$, and it is also an isomorphism of Lie algebras. See \cite{AKKN}, Remark 4.14 and  the formula  before Remark 4.13. 

\begin{remark}
From \eqref{eq5.C} it is seen that $\{p(\wh z), p(\ww)\}=0$ whenever the word $\wh z$ is composed from copies of a single letter. Thus, such elements $p(\wh z)$ lie in the center of  the Lie algebra $\Lie_\LL$. By \cite[Theorem 4.15]{AKKN}, their linear span exhausts the whole center. 
\end{remark}

\section{Generators of $A_\dd$}\label{sect6}

The expressions on the right-hand side of formulas \eqref{eq4.C} and \eqref{eq4.D} can also be interpreted as elements of the enveloping algebras, and then we obtain generators of the algebra $A_\dd(N)$. However, in contrast with the symmetric algebra case, the resulting elements are no longer consistent with the projections $\pi_{N,N-1}: A_\dd(N)\to A_\dd(N-1)$; consistency holds for top degree terms only.  To remedy this we need to add appropriate correcting terms of lower degree. As is shown below, this can be achieved by a suitable modification of the formulas of section \ref{sect2.3}, but now we have to work with several \emph{noncommuting} formal parameters.

\subsection{The elements $e_{ij}(w;N)$ and $e(w;N)$}\label{sect6.1}

In the next definition we reproduce the formula \eqref{eq4.D} with the understanding that the right-hand side is now interpreted  as an element of the noncommutative algebra $U(\gl(N,\C)^\oL)$ and not the commutative algebra $S(\gl(N,\C)^\oL)$: 
\begin{equation}\label{eq6.J}
e_{ij}(w;N):=\sum_{a_1,\dots,a_{n-1}=1}^N E_{ia_1\mid w_1} E_{a_1a_2\mid w_2}\dots E_{a_{n-1}j\mid w_n}\in U(\gl(N,\C)^\oL),
\end{equation}
where 
$$
w=w_1\dots w_n\in\W_L, \qquad 1\le i,j\le N.
$$
We also agree that 
$$
e_{ij}(\varnothing;N):=\de_{ij}.
$$ 

\begin{lemma}\label{lemma6.A}
The following commutation relations hold
\begin{equation}\label{eq6.J1}
[e_{ij}(w;N), \diag E_{kl}]=\de_{kj} e_{il}(w;N)-\de_{il}e_{kj}(w;N), \qquad 1\le i,j,k,l\le N.
\end{equation}
\end{lemma}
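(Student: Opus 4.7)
The plan is to expand $[e_{ij}(w;N), \diag E_{kl}]$ by the Leibniz rule applied to the product of $n$ matrix-unit factors $E_{a_{r-1}a_r\mid w_r}$ on the right-hand side of \eqref{eq6.J}, with the convention $a_0:=i$, $a_n:=j$. Because matrix units in different copies of $\gl(N,\C)$ commute, only the summand $E_{kl\mid w_r}$ of $\diag E_{kl}=\sum_{\alpha=1}^L E_{kl\mid\alpha}$ contributes to the commutator with the $r$-th factor, and that single-copy commutator is given by the standard $\gl(N)$ relation
$$
[E_{a_{r-1}a_r\mid w_r},\, E_{kl\mid w_r}]
=\delta_{a_r,k}\,E_{a_{r-1}l\mid w_r}-\delta_{l,a_{r-1}}\,E_{ka_r\mid w_r}.
$$

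Next, I would substitute these two contributions back into the Leibniz sum over $r=1,\dots,n$ and eliminate the Kronecker deltas inside the multiple sum over $a_1,\dots,a_{n-1}$ by specializing $a_r=k$ in the ``$+$'' part and $a_{r-1}=l$ in the ``$-$'' part. The key observation is a position-shift match: after these specializations, the ``$+$'' contribution at position $r$ becomes literally the same expression as the ``$-$'' contribution at position $r+1$, because both carry $E_{a_{r-1}l\mid w_r}$ in slot $r$ and $E_{ka_{r+1}\mid w_{r+1}}$ in slot $r+1$, with all remaining internal indices free. Summing over $r$ then collapses telescopically and leaves only the two boundary terms, namely the ``$+$'' at $r=n$ and the ``$-$'' at $r=1$.

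Finally, the boundary $r=n$ forces $a_n=k$; combined with $a_n=j$ this produces the factor $\delta_{kj}$ multiplying $e_{il}(w;N)$. Symmetrically, $r=1$ forces $a_0=l$; combined with $a_0=i$ this produces $\delta_{il}$ multiplying $e_{kj}(w;N)$. With the signs inherited from the single-copy commutator, the result is exactly \eqref{eq6.J1}.

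There is no genuine obstacle; the argument is pure bookkeeping, and the only spot requiring care is the index matching between position $r$ and position $r+1$ after the Kronecker-delta contractions. Conceptually, \eqref{eq6.J1} is the infinitesimal form of the natural transformation law of the ``matrix entry'' $e_{ij}(w;N)$ under the diagonal adjoint action of $GL(N,\C)$, mirroring the way $(X_{w_1}\cdots X_{w_n})_{ij}$ transforms in \eqref{eq4.D1}; this gives an independent sanity check of the formula.
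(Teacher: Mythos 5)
Your proof is correct: the full Leibniz expansion over the $n$ factors, the reduction to the single-copy commutator $[E_{a_{r-1}a_r\mid w_r},E_{kl\mid w_r}]=\delta_{a_r,k}E_{a_{r-1}l\mid w_r}-\delta_{l,a_{r-1}}E_{ka_r\mid w_r}$, and the telescoping cancellation of the ``$+$'' term at position $r$ against the ``$-$'' term at position $r+1$, leaving only the boundary contributions $\delta_{kj}e_{il}(w;N)$ at $r=n$ and $-\delta_{il}e_{kj}(w;N)$ at $r=1$, all check out. The paper establishes the same identity by induction on the length of $w$, splitting off the last letter and cancelling two terms, which is just your telescoping computation packaged recursively, so the two arguments are essentially the same.
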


Here $\diag E_{kl}$ is the image of the matrix unit $E_{kl}$ under the diagonal embedding of $\gl(N,\C)$ into $\gl(N,\C)^\oL$,
\begin{equation}\label{eq6.J2}
\diag E_{kl}=\sum_{\al=1}^N E_{kl\mid\al}.
\end{equation}
The lemma means that for any fixed $w\in\W_L$, the elements $e_{ij}(w;N)$ are transformed under the adjoint action of the Lie algebra $\diag \gl(N,\C)$ in exactly the same way as the matrix units $E_{ij}\in\gl(N,\C)$ are transformed under the adjoint action of the Lie algebra $\gl(N,\C)$. This fact is well known in the simplest case $L=1$, see e.g.  Zhelobenko \cite[\S60]{Z}. 

\begin{proof}
For $n=1$, the claim means that
\begin{equation}\label{eq6.J3}
[E_{ij\mid\al}, \diag E_{kl}]=\de_{kj} E_{il\mid\al}-\de_{il}E_{kj\mid\al}, \quad \al\in\{1,\dots,L\},
\end{equation}
and this follows easily from \eqref{eq6.J2} and the fact that $[E_{ij\mid\al}, E_{kl\mid\be}]=0$ for $\al\ne\be$. Next we use induction on $n$.

Thus, suppose that $n\ge2$ and the claim holds true for $n-1$. Setting $w':=w_1\dots w_{n-1}$ we have
$$
e_{ij}(w;N)=\sum_{a=1}^N e_{ia}(w';N)E_{aj\mid w_n},
$$
whence
$$
[e_{ij}(w;N),\diag E_{kl}]=\sum_{a=1}^N [e_{ia}(w';N),\diag E_{kl}]E_{aj\mid w_n}+\sum_{a=1}^N e_{ia}(w';N)[E_{aj\mid w_n},\diag E_{kl}].
$$
For the commutator in the first sum, we apply the induction assumption, and for the commutator in the second sum, we use \eqref{eq6.J3}. After a cancellation of two terms we get the desired result. 
\end{proof}

Here are two immediate corollaries.

\begin{corollary}\label{cor6.A}
For $d\in\{1,\dots,N-1\}$, one has
$$
e_{ij}(w;N)\in A_\dd(N), \quad 1\le i,j\le d.
$$
\end{corollary}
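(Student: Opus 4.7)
The plan is to deduce the corollary directly from Lemma \ref{lemma6.A}. Recall that $A_\dd(N)$ is by definition the centralizer of $\diag\gl_d(N,\C)$ inside $U(\gl(N,\C)^\oL)$, and this subalgebra is spanned, as a Lie algebra, by the elements $\diag E_{kl}$ with $d+1\le k,l\le N$. Hence it suffices to verify that $e_{ij}(w;N)$ commutes with $\diag E_{kl}$ for every such pair $(k,l)$.

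Fix $1\le i,j\le d$ and $d+1\le k,l\le N$. Applying Lemma \ref{lemma6.A} gives
$$
[e_{ij}(w;N),\diag E_{kl}]=\de_{kj}e_{il}(w;N)-\de_{il}e_{kj}(w;N).
$$
Both Kronecker deltas vanish: since $k\ge d+1>d\ge j$, we have $\de_{kj}=0$, and since $l\ge d+1>d\ge i$, we have $\de_{il}=0$. So the right-hand side is zero, which proves that $e_{ij}(w;N)$ centralizes $\diag\gl_d(N,\C)$ and therefore lies in $A_\dd(N)$.

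There is no real obstacle here; the only thing to check is that the range of indices $(k,l)$ generating $\diag\gl_d(N,\C)$ is disjoint from $\{1,\dots,d\}$, which forces the two delta terms supplied by Lemma \ref{lemma6.A} to vanish simultaneously. The conceptual content is that, under the adjoint action of $\diag\gl(N,\C)$, the elements $e_{ij}(w;N)$ transform exactly like matrix units, and the matrix units $E_{ij}$ with $1\le i,j\le d$ are annihilated by $\gl_d(N,\C)$ (which acts on the complementary block).
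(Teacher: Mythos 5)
Your proof is correct and is exactly the argument the paper intends: the paper presents the corollary as an immediate consequence of Lemma \ref{lemma6.A}, precisely because for $1\le i,j\le d$ and $d+1\le k,l\le N$ both Kronecker deltas on the right-hand side of \eqref{eq6.J1} vanish, so $e_{ij}(w;N)$ commutes with all of $\diag\gl_d(N,\C)$. Nothing is missing.
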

Evidently,  $e_{ij}(w;N)$ is a lifting of $p_{ij}(w;N)$ (in the sense of Definition \ref{def4.A}). 

\begin{corollary}\label{cor6.B}
The elements
$$
e(w;N):=\sum_{i=1}^N e_{ii}(w;N), \quad w\in\W_L,
$$
are contained in $A_{0,L}(N)$.
\end{corollary}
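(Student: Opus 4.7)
The plan is to derive this immediately from Lemma \ref{lemma6.A} by taking the trace over the matrix indices $i,j$. By definition, $A_{0,L}(N)$ is the centralizer of $\diag \gl(N,\C)$ in $U(\gl(N,\C)^{\oL})$, so it suffices to check that $e(w;N)$ commutes with every element $\diag E_{kl}$ for $1\le k,l\le N$.

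I would apply Lemma \ref{lemma6.A} with $j=i$, obtaining
$$
[e_{ii}(w;N), \diag E_{kl}] = \de_{ki}\, e_{il}(w;N) - \de_{il}\, e_{ki}(w;N),
$$
and then sum over $i$ from $1$ to $N$. On the right-hand side the first Kronecker delta selects the term $i=k$, contributing $e_{kl}(w;N)$, while the second delta selects the term $i=l$, contributing $-e_{kl}(w;N)$. The two contributions cancel, so
$$
[e(w;N), \diag E_{kl}] = 0 \qquad \text{for all } 1\le k,l\le N.
$$
Since $\diag\gl(N,\C)$ is spanned by the elements $\diag E_{kl}$, this shows that $e(w;N)$ lies in the centralizer of $\diag\gl(N,\C)$, which is exactly $A_{0,L}(N)$.

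There is no real obstacle here: the statement is a direct telescoping consequence of Lemma \ref{lemma6.A}, expressing the general fact that the trace of a $\gl(N,\C)$-covariant matrix-valued element is invariant. The only thing to check is the sign bookkeeping in the two delta-function sums, which is immediate.
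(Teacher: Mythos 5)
Your proof is correct and is exactly the argument the paper intends: the corollary is stated as an immediate consequence of Lemma \ref{lemma6.A}, obtained precisely by setting $j=i$, summing over $i$, and observing the cancellation $\de_{ki}e_{il}(w;N)-\de_{il}e_{ki}(w;N)\rightsquigarrow e_{kl}(w;N)-e_{kl}(w;N)=0$, so that $e(w;N)$ centralizes $\diag\gl(N,\C)$ and hence lies in $A_{0,L}(N)$. Nothing further is needed.
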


Evidently, $e(w;N)$ is a lifting of $p(w;N)$ (see \eqref{eq4.C}). 

\begin{remark}\label{rem6.A}
Recall that the element $p(w;N)\in P_{0,L}(N)$ depends only on the circular word $\ww$, the image of $w$ under the projection $\W_L\to\CW_L$.  For the elements $e(w;N)\in A_{0,L}(N)$ with $L\ge2$ this is no longer true: if two words, $w'$ and $w''$, have the same image in $\CW_L$, then $e(w';N)$ and $e(w'';N)$ have the same top degree term in the sense of Definition \ref{def4.A}, but are not necessarily equal. A simple example is $L=2$, $w'=121$, $w''=112$. Then one has
$$
e(121;N)=e(112;N)+e(1;N)e(2;N)-N e(12;N).
$$
\end{remark}

\subsection{Generating series with free variables}\label{sect6.2}

Let $u_1,\dots,u_\LL$ be free noncommuting variables. Given a (possibly noncommutative) algebra $\Aa$, we denote by 
$$
\Aa\langle\langle u_1^{-1},\dots,u_L^{-1}\rangle\rangle
$$
the unital algebra of formal power series in negative powers of free variables $u_1, \dots,u_\LL$, with coefficients in $\Aa$. We assume that the coefficients commute with the variables, so that the definition makes sense.  

Given a collection of elements $\{a(w): w\in\W_L\}\subset\Aa$, we assign to it the generating series 
\begin{equation}\label{eq6.A2}
a_\varnothing +\sum_{w\in\W_L}a(w) \bar u(w)\, \in\, \Aa\langle\langle u_1^{-1},\dots,u_L^{-1}\rangle\rangle,
\end{equation}
where we adopt the shorthand notation 
\begin{equation}\label{eq6.A3}
\bar u(w):=u_{w_1}^{-1}\dots u_{w_n}^{-1} \quad \text{for $w=w_1\dots w_n\in\W_L$},
\end{equation}
and $a_\varnothing$ is a constant term, which is sometimes added for convenience. 

\begin{example}\label{example6.A}
Take $\Aa:=U(\gl(N,\C)^\oL)$, form the following $N\times N$ matrices with the entries in $\Aa$
$$
E(N)_\al:=[E_{ij\mid \w}]_{i,j=1}^N, \quad \al=1,\dots,\LL,
$$
and then consider the $N\times N$ matrix
\begin{equation}\label{eq6.A1}
\left(1-\sum_{\al=1}^\LL E(N)_\al u^{-1}_\w\right)^{-1}.
\end{equation}
Its entries have the form
$$
\de_{ij}+\sum_{w\in\W_L}e_{ij}(w;N) \bar u(w),
$$
so that they serve as generating series for the elements $e_{ij}(w;N)$. Next, the trace of the matrix \eqref{eq6.A1} has the form
$$
N+\sum_{w\in\W_L}e(w;N) \bar u(w),
$$
which is a generating series for the elements $e(w;N)$. 
\end{example}

\subsection{The elements $\wt t_{ij}(w;N;s)$ and $\wt t(w;N;s)$}\label{sect6.3}

Note that in a formal series of the form \eqref{eq6.A2}, one can shift any variable by a complex constant: this operation is correctly defined and leads to an automorphism of the algebra $\Aa\langle\langle u_1^{-1},\dots,u_L^{-1}\rangle\rangle$. This fact is used in the next definition.

\begin{definition}\label{def6.B}
We extend Example \ref{example6.A} as follows. Let $N\in\Z_{\ge1}$ and $s\in \C$ be an additional parameter. 

(i) We form the following $N\times N$ matrix:
\begin{equation}\label{eq6.A}
\wt T(N;s):=\left(1-\sum_{\al=1}^\LL\dfrac{E(N)_\al}{u_\al+s}\right)^{-1}.
\end{equation}

(ii) The entries of $\wt T(N;s)$ are denoted by $\wt T_{ij}(N;s)$, $1\le i,j\le N$; these are elements of the algebra 
\begin{equation}\label{eq6.D}
U(\gl(N,\C)^{\oplus L})\langle\langle u_1^{-1},\dots,u_L^{-1}\rangle\rangle.
\end{equation}

(iii) The elements $\wt t_{ij}(w;N;s)\in U(\gl(N,\C)^{\oplus L})$, labelled by the bi-indices $(i,j)$ and the words $w\in\W_L$, are defined as the coefficients of the series $\wt T_{ij}(N;s)$:
\begin{equation}\label{eq6.E1}
\wt T_{ij}(N;s)=\de_{ij}+\sum_{w\in\W_L} \wt t_{ij}(w;N;s) \bar u(w), \quad 1\le i,j\le N,
\end{equation}
according to the notation \eqref{eq6.A3}.

(iv) The elements $\wt t(w;N;s)\in U(\gl(N,\C)^{\oplus L})$, labelled by the words $w\in\W_L$, are defined by
\begin{equation}\label{eq6.E2}
\wt t(w;N;s):=\sum_{i=1}^N \wt t_{ii}(w;N;s).
\end{equation}
Equivalently, the elements $\wt t(w;N;s)$ can be defined as the coefficients of the series expansion
$$
\tr(\wt T(N;s)-1)=\sum_{w\in\W_L}\wt t(w;N;s) \bar u(w).
$$
\end{definition}

\begin{remark}\label{rem6.B}
By setting $s=0$ in \eqref{eq6.A} we return to the situation of  Example \ref{example6.A}. Consequently, we have  
\begin{equation}\label{eq6.M}
\wt t_{ij}(w;N;0)=e_{ij}(w;N), \quad \wt t(w;N;0)=e(w;N).
\end{equation}
\end{remark}

\subsection{Transition coefficients related to  $\{\wt t_{ij}(w;N;s)\}$}

\begin{definition}\label{def6.C}
We define a partial order $\preceq$ on $W_L$ in the following way. Observe that each word $w\in\W_L$ can be written in the form
\begin{equation}\label{eq6.H1}
\begin{gathered}
w=\al_1^{m_1}\al_2^{m_2}\dots \al_k^{m_k}:=\underbrace{\al_1\dots\al_1}_{m_1} \underbrace{\al_2\dots\al_2}_{m_2}\dots\underbrace{\al_k\dots\al_k}_{m_k}, \\
\text{where $\al_1\ne \al_2, \; \al_2\ne\al_3,\; \ldots \;, \;\al_{k-1}\ne\al_k$.}
\end{gathered}
\end{equation}
We say that a word $w'$ is \emph{dominated} by (or equal to) $w$, and then write $w'\preceq w$, if
\begin{equation}\label{eq6.H2}
\begin{gathered}
w'=\al_1^{m_1-r_1}\al_2^{m_2-r_2}\dots \al_k^{m_k-r_k}, \\
\text{where $0\le r_1\le m_1-1, \; 0\le r_2\le m_2-1,\; \ldots \;, \; 0\le r_k\le m_k-1$.}
\end{gathered}
\end{equation}
We also write $w'\prec w$ if $w'\preceq w$ and $w'\ne w$.
\end{definition}

\begin{lemma}\label{lemma6.B}
Fix arbitrary  $s, s'\in\C$.
 
{\rm(i)} In the notation of \eqref{eq6.H1} and \eqref{eq6.H2}, one has
\begin{equation}\label{eq6.H3}
\begin{gathered}
\wt t_{ij}(w;N;s)=\sum_{w':\,w'\preceq w} c(w,w';s, s') \wt t_{ij}(w';N;s'), \quad \text{\rm where} \\
 c(w,w';s,s'):=(s'-s)^{r_1+\dots +r_k}\binom{m_1-1}{r_1}\dots\binom{m_k-1}{r_k}.
 \end{gathered}
 \end{equation}

{\rm(ii)} Likewise,
\begin{equation}\label{eq6.H4}
\wt t(w;N;s)=\sum_{w':\,w'\preceq w} c(w,w';s,s') \wt t(w';N;s').
\end{equation}
\end{lemma}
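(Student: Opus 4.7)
\textbf{The plan} is to exploit a simple shift identity: the substitution $u_\al \to v_\al + (s' - s)$ sends $u_\al + s$ to $v_\al + s'$, so that
\[
\wt T(N; s)\Big|_{u_\al \,\to\, v_\al + (s' - s)} = \wt T(N; s'),
\]
where the right-hand side is now read as a matrix in the new free variables $v_1, \ldots, v_L$. This is the engine of the whole argument; the substitution is legitimate because the scalar $s' - s$ commutes with $v_\al$.

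Expanding $(v_\al + (s'-s))^{-1} = \sum_{k \ge 0} (s - s')^k v_\al^{-k-1}$ and substituting into $\bar u(w) = u_{w_1}^{-1} \cdots u_{w_n}^{-1}$, one finds
\[
\bar u(w) \;\longmapsto\; \sum_{k_1, \ldots, k_n \ge 0} (s - s')^{k_1 + \cdots + k_n} \, \bar v\bigl(w_1^{k_1+1} \cdots w_n^{k_n+1}\bigr),
\]
where $w_1^{k_1+1} \cdots w_n^{k_n+1}$ is the word obtained by repeating each letter $w_i$ of $w$ a total of $k_i + 1$ times. Plugging this into the $\bar u$-expansion \eqref{eq6.E1} of $\wt T_{ij}(N; s)$, collecting by $\bar v(w'')$, and matching with the $\bar v$-expansion of $\wt t_{ij}(w''; N; s')$ yields an identity expressing $\wt t_{ij}(w''; N; s')$ as a sum involving the $\wt t_{ij}(w; N; s)$. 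Exchanging $s \leftrightarrow s'$ and relabelling $w'' \to w$, $w \to w'$ puts it in the target form
\[
\wt t_{ij}(w; N; s) \;=\; \sum_{(w', \vec k)} (s' - s)^{k_1 + \cdots + k_{|w'|}} \, \wt t_{ij}(w'; N; s'),
\]
summed over pairs $(w', \vec k)$ with $(w'_1)^{k_1+1} \cdots (w'_{|w'|})^{k_{|w'|}+1} = w$ and $k_j \ge 0$. To identify this with \eqref{eq6.H3}, I would write $w$ in its reduced form $w = \al_1^{m_1} \cdots \al_k^{m_k}$; then the constraint forces $w'$ to subdivide each maximal block $\al_i^{m_i}$ of $w$ into $n_i \ge 1$ consecutive runs of $\al_i$'s, so $w' = \al_1^{n_1} \cdots \al_k^{n_k}$. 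Setting $r_i := m_i - n_i \in \{0, \ldots, m_i - 1\}$ places $w'$ in exact correspondence with the dominating words of Definition~\ref{def6.C}; for fixed $w'$ the number of admissible $\vec k$ equals $\prod_i \binom{m_i - 1}{n_i - 1} = \prod_i \binom{m_i - 1}{r_i}$, and $\sum_j k_j = |w| - |w'| = r_1 + \cdots + r_k$, giving precisely the coefficient $c(w, w'; s, s')$.

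Part (ii) is then immediate from part (i): summing \eqref{eq6.H3} over $j = i$ from $1$ to $N$ and invoking \eqref{eq6.E2} yields \eqref{eq6.H4}. The only genuinely nontrivial step of the plan is the combinatorial identification --- establishing that pairs $(w', \vec k)$ with $w'(\vec k) = w$ are in bijection with dominating words $w' \preceq w$ together with a composition of each maximal block of $w$ into positive parts, and counting these correctly. Once this bijection is in place, the binomial coefficients and the sign of $s' - s$ drop out automatically from the geometric series expansion above, and no further technical input is needed.
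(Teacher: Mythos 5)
Your proof is correct and rests on the same mechanism as the paper's: shifting the formal variable(s) by $s'-s$ and expanding the resulting negative powers, which is exactly where the binomial coefficients in $c(w,w';s,s')$ come from. The only difference is organizational --- the paper reduces to the single-variable ($L=1$) identity \eqref{eq6.N} and leaves the passage back to general words implicit, whereas you carry out the multivariable expansion directly and make explicit the block-by-block composition count $\prod_i\binom{m_i-1}{r_i}$ that the paper treats as ``easy to deduce''.
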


\begin{proof}
(i) In the case $L=1$, the relation \eqref{eq6.H3} reduces to the following assertion: let the polynomials $a_m(x;s)$ in the variable $x$ be defined from the relation
\begin{equation}\label{eq6.P1}
1+\sum_{m=1}^\infty a_m(x;s)u^{-m}=\left(1-\frac x{u+s}\right)^{-1}.
\end{equation}
Then one has
\begin{equation}\label{eq6.N}
\begin{gathered}
a_m(x;s)=\sum_{n=1}^m c(m,n;s,s') a_n(x;s'), \quad \text{\rm where} \\
 c(m,m-r;s,s'):=(s'-s)^r\binom{m-1}{r}, \quad 0\le r\le m-1.
 \end{gathered}
\end{equation} 

Conversely, it is easy to deduce \eqref{eq6.H3} from \eqref{eq6.N}. Thus, it suffices to check \eqref{eq6.N}. Set $\si:=s'-s$. From \eqref{eq6.P1} 
we have
\begin{equation}\label{eq6.P2}
1+\sum_{m=1}^\infty a_m(x;s)u^{-m}=1+\sum_{n=1}^\infty a_n(x;s')(u-\si)^{-n}.
\end{equation}
Next,
$$
(u-\si)^{-n}=\sum_{r=0}^\infty\binom{-n}{r}(-\si)^r u^{-n-r}=\sum_{r=0}^\infty\binom{n+r-1}{r}\si^r u^{-n-r}.
$$
Substituting this into the right-hand of \eqref{eq6.P2} and equating the coefficients we get \eqref{eq6.N}.

(ii) The relation \eqref{eq6.H4} follows from \eqref{eq6.H3} and the definition \eqref{eq6.E2}.
\end{proof}

Here are a few comments.

\begin{itemize}

\item[(i)] The transition coefficients $c(w,w';s,s')$ do not depend on $N$. 

\item[(ii)] The leading coefficient $c(w,w;s,s')$ equals $1$, so that 
\begin{equation}\label{eq6.H5}
\begin{gathered}
\wt t_{ij}(w;N;s)=\wt t_{ij}(w;N;s')+\sum_{w':\,w'\prec w} c(w,w';s;s') \wt t_{ij}(w';N;s'),  \\
\wt t(w;N;s)=\wt t(w;N;s')+\sum_{w':\,w'\prec w} c(w,w';s,s') \wt t(w';N;s').
 \end{gathered}
\end{equation}

\item[(iii)] In view of Remark \ref{rem6.B} we have 
\begin{gather}
\wt t_{ij}(w;N;s)=e_{ij}(w;N)+\sum_{w':\,w'\prec w} c(w,w';s;0) e_{ij}(w';N), \label{eq6.L1}\\
e_{ij}(w;N)=\wt t_{ij}(w;N;s)+\sum_{w':\,w'\prec w} c(w,w';0;s) \wt t_{ij}(w';N). \label{eq6.L2}
\end{gather}
Likewise,
\begin{gather}
\wt t(w;N;s)=e(w;N)+\sum_{w':\,w'\prec w} c(w,w';s;0) e(w';N), \label{eq6.N1}\\
e(w;N)=\wt t(w;N;s)+\sum_{w':\,w'\prec w} c(w,w';0;s) \wt t(w';N;s). \label{eq6.N2}
\end{gather}

\end{itemize}

Due to \eqref{eq6.L1} and \eqref{eq6.N1} we can extend Corollary \eqref{cor6.A} and Corollary \ref{cor6.B}. 

\begin{corollary}[cf. Corollary \ref{cor6.A}]\label{cor6.A1}
For $d\in\{1,\dots,N-1\}$, one has
$$
\wt t_{ij}(w;N;s)\in A_\dd(N), \quad 1\le i,j\le d.
$$
\end{corollary}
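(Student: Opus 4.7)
The plan is to deduce this directly from Corollary \ref{cor6.A} combined with the triangular expansion \eqref{eq6.L1} obtained in Lemma \ref{lemma6.B}. Since $A_\dd(N)$ is a $\C$-linear subspace of $U(\gl(N,\C)^{\oplus L})$, it is closed under arbitrary finite $\C$-linear combinations. By Corollary \ref{cor6.A}, each element $e_{ij}(w';N)$ with $1\le i,j\le d$ and $w'\in\W_L$ already lies in $A_\dd(N)$.

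Now formula \eqref{eq6.L1} writes
$$
\wt t_{ij}(w;N;s) \;=\; e_{ij}(w;N) \;+\; \sum_{w'\prec w} c(w,w';s,0)\, e_{ij}(w';N),
$$
expressing $\wt t_{ij}(w;N;s)$ as a finite $\C$-linear combination of elements of the form $e_{ij}(w';N)$ with $w'\preceq w$. Each summand is in $A_\dd(N)$ by the previous observation, so the whole sum is as well.

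There is essentially no obstacle; the content of the corollary is that the substitution $u_\al\mapsto u_\al+s$ is an automorphism of the coefficient algebra \eqref{eq6.D} that does not disturb the $\diag\gl_d(N,\C)$-invariance of the matrix entries of $\wt T(N;s)$. A second, slightly more conceptual, route would be to apply Lemma \ref{lemma6.A} directly to the matrix \eqref{eq6.A}: since each $E(N)_\al$ transforms as $\gl(N,\C)$ under $\operatorname{ad}(\diag\gl(N,\C))$, the same is true for any formal geometric series in the $E(N)_\al/(u_\al+s)$, and in particular for the entries $\wt T_{ij}(N;s)$ with $1\le i,j\le d$; extracting the coefficient of $\bar u(w)$ then gives the invariance of $\wt t_{ij}(w;N;s)$. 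Either argument is a one-line bookkeeping check, so I would simply write the proof as a single appeal to Corollary \ref{cor6.A} and \eqref{eq6.L1}.
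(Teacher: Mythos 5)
Your argument is correct and coincides with the paper's own justification: the paper derives this corollary precisely from the triangular expansion \eqref{eq6.L1} together with Corollary \ref{cor6.A}, exactly as you do. The alternative route you sketch via Lemma \ref{lemma6.A} applied to the matrix \eqref{eq6.A} is also sound, but the paper does not need it here.
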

Evidently,  $\wt t_{ij}(w;N;s)$ is a lifting of $p_{ij}(w;N)$ (in the sense of Definition \ref{def4.A}). 

\begin{corollary}[cf. Corollary \ref{cor6.B}]\label{cor6.B1}
The elements $\wt t(w;N;s)$ are contained in $A_{0,L}(N)$.
\end{corollary}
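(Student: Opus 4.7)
The plan is to invoke the triangular transition formula \eqref{eq6.N1}, which expresses
$$\wt t(w;N;s) = e(w;N) + \sum_{w' \prec w} c(w,w';s,0)\, e(w';N)$$
as a finite $\C$-linear combination of the elements $e(w'';N)$ with scalar coefficients that are independent of $N$. By Corollary~\ref{cor6.B}, each summand $e(w'';N)$ lies in the centralizer subalgebra $A_{0,L}(N)$; since $A_{0,L}(N)$ is a vector subspace of $U(\gl(N,\C)^\oL)$, the linear combination $\wt t(w;N;s)$ lies in $A_{0,L}(N)$ as well.

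There is essentially no obstacle here: formula \eqref{eq6.N1} has already been recorded in the excerpt as a consequence of part~(ii) of Lemma~\ref{lemma6.B} together with the identification \eqref{eq6.M} at $s=0$, and Corollary~\ref{cor6.B} is also in place. If one prefers to avoid going through \eqref{eq6.N1}, an equally short direct route is available: by linearity in the $e_{ij}$-expansion \eqref{eq6.L1}, Lemma~\ref{lemma6.A} immediately implies
$$[\wt t_{ij}(w;N;s),\, \diag E_{kl}] = \de_{kj}\,\wt t_{il}(w;N;s) - \de_{il}\,\wt t_{kj}(w;N;s),$$
and summing over $i=j$ shows that $\wt t(w;N;s) = \sum_i \wt t_{ii}(w;N;s)$ commutes with every $\diag E_{kl}$ for $1\le k,l\le N$, hence lies in $A_{0,L}(N)$. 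Either route completes the proof in a few lines; the corollary is a bookkeeping consequence of the shift-of-parameter identity together with the unshifted invariance already established.
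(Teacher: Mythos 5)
Your first route is exactly the paper's argument: the paper derives Corollary \ref{cor6.B1} directly from the transition formula \eqref{eq6.N1} together with Corollary \ref{cor6.B}, just as you do. The proposal is correct (and your alternative direct verification via Lemma \ref{lemma6.A} is also sound, though not needed).
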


Evidently, $\wt t(w;N;s)$ is a lifting of $p(w;N)$ (see \eqref{eq4.C}). 

\subsection{The action of projections $\pi_{N,N-1}$ on the elements $\wt t_{ij}(w;N;s)$ and $\wt t(w;N;s)$}

In the next theorem, we are dealing with the algebra morphisms  
$$
\pi_{N,N-1}: U(\gl(N,\C)^\oL)^{\diag E_{NN}} \to U(\gl(N-1,\C)^\oL),
$$
which were defined in section \ref{sect3.1}.

\begin{theorem}\label{thm6.A}
For any $s\in\C$, one has
\begin{equation}\label{eq6.O1}
\pi_{N,N-1}(\wt t_{ij}(w;N;s))=\wt t_{ij}(w;N-1;s-1), \quad 1\le i,j\le N-1, 
\end{equation}
and 
\begin{equation}\label{eq6.O2}
\pi_{N,N-1}(\wt t(w;N;s))=\wt t(w;N-1;s-1).
\end{equation}
\end{theorem}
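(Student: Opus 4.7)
The plan is to prove both \eqref{eq6.O1} and \eqref{eq6.O2} by computing $\pi_{N,N-1}$ on the whole matrix $\wt T(N;s)$ via the block decomposition $N=(N-1)+1$. Partition
$$M := 1-\sum_{\al=1}^L(u_\al+s)^{-1}E(N)_\al \;=\; \begin{bmatrix} A & B\\ C & d\end{bmatrix}$$
accordingly, so that $A = 1-\sum_\al(u_\al+s)^{-1}E(N-1)_\al$, the column $B$ and row $C$ collect the off-diagonal entries $-\sum_\al(u_\al+s)^{-1}E_{iN\mid\al}$ and $-\sum_\al(u_\al+s)^{-1}E_{Nj\mid\al}$ (with $i,j\le N-1$), and $d = 1-\sum_\al(u_\al+s)^{-1}E_{NN\mid\al}$. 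The noncommutative Schur complement identities give
$$\wt T^{(N-1)}(N;s) = (A-Bd^{-1}C)^{-1}, \qquad \wt T_{NN}(N;s) = (d-CA^{-1}B)^{-1},$$
where $\wt T^{(N-1)}$ denotes the top-left $(N-1)\times(N-1)$ block. Since $\pi_{N,N-1}$ is an algebra morphism on $A_{N-1,L}(N)\langle\langle u^{-1}\rangle\rangle$ and every series above has constant term $1$, $\pi_{N,N-1}$ commutes with formal inversion, so it suffices to compute it on $M$ block by block.

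The key computation is $(Bd^{-1}C)_{ij}$ for $i,j\le N-1$. The relation $[E_{NN\mid\ga},E_{Nj\mid\ga}]=E_{Nj\mid\ga}$ for $j\le N-1$ yields $d\,E_{Nj\mid\ga}=E_{Nj\mid\ga}d_\ga$ with $d_\ga := d-(u_\ga+s)^{-1}$, hence $d^{-1}E_{Nj\mid\ga}=E_{Nj\mid\ga}d_\ga^{-1}$. Combined with $E_{iN\mid\be}E_{Nj\mid\ga}=E_{Nj\mid\ga}E_{iN\mid\be}+\de_{\be\ga}(E_{ij\mid\be}-\de_{ij}E_{NN\mid\be})$ and the free commutation of the $E$-generators with every scalar $(u_\be+s)^{-1}$, this splits
$$(Bd^{-1}C)_{ij} \;=\; \Sigma'_{ij} \;+\; \sum_\be (E_{ij\mid\be}-\de_{ij}E_{NN\mid\be})(u_\be+s)^{-1}d_\be^{-1}(u_\be+s)^{-1},$$
where each summand in $\Sigma'_{ij}$ carries $E_{Nj\mid\ga}$ as its leftmost $E$-factor. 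Each such summand is individually invariant under $\diag E_{NN}$ and lies in $I^-(N)\langle\langle u^{-1}\rangle\rangle$, hence in $L(N)\langle\langle u^{-1}\rangle\rangle$, and is killed by $\pi_{N,N-1}$. On the remaining sum, $\pi_{N,N-1}(E_{NN\mid\be})=0$ and $\pi_{N,N-1}(d_\be)=1-(u_\be+s)^{-1}$; the elementary simplifications $(u_\be+s)^{-1}(1-(u_\be+s)^{-1})^{-1}(u_\be+s)^{-1}=(u_\be+s-1)^{-1}(u_\be+s)^{-1}$ and $(u_\al+s)^{-1}+(u_\al+s-1)^{-1}(u_\al+s)^{-1}=(u_\al+s-1)^{-1}$ then collapse $\pi_{N,N-1}(M_{ij})$, for $i,j\le N-1$, to $\de_{ij}-\sum_\al(u_\al+s-1)^{-1}E_{ij\mid\al}$, i.e.\ the $(i,j)$-entry of $\wt T(N-1;s-1)^{-1}$. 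Inversion yields $\pi_{N,N-1}(\wt T^{(N-1)}(N;s))=\wt T(N-1;s-1)$, and extracting the coefficient of $\bar u(w)$ proves \eqref{eq6.O1}.

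For the corner $(N,N)$ the argument is shorter: every term of $CA^{-1}B$ has $E_{bN\mid\be}$ as its rightmost $E$-factor, placing $CA^{-1}B$ in $I^+(N)\langle\langle u^{-1}\rangle\rangle$; combined with $d-CA^{-1}B=\wt T_{NN}(N;s)^{-1}\in A_{N-1,L}(N)\langle\langle u^{-1}\rangle\rangle$, this forces $CA^{-1}B\in L(N)\langle\langle u^{-1}\rangle\rangle$, hence $\pi_{N,N-1}(\wt T_{NN}(N;s))=1$. Summing the diagonal contributions then gives $\pi_{N,N-1}(\tr(\wt T(N;s)-1))=\tr(\wt T(N-1;s-1)-1)$, and extracting the coefficient of $\bar u(w)$ proves \eqref{eq6.O2}. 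The main technical obstacle is that the variables $u_\al$ do not commute with each other, so that $(u_\be+s)^{-1}$ cannot be moved freely past $d^{-1}$; the auxiliary element $d_\ga$ circumvents this, since only the $E$-generators (which commute with every $(u_\al+s)^{-1}$) ever get moved past $u$-dependent factors.
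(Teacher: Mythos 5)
Your proof is correct and takes essentially the same route as the paper: the $(N-1)+1$ block decomposition with the noncommutative Schur complement, the key commutation identity $d^{-1}E_{Nj\mid\gamma}=E_{Nj\mid\gamma}d_\gamma^{-1}$ (which is exactly the paper's auxiliary lemma $(1-Y)^{-1}\eps=\eps(1-Y-v)^{-1}$), and the same rational simplification producing the shift $s\mapsto s-1$, the only difference being bookkeeping --- you push $\pi_{N,N-1}$ through matrix inversion over $A_{N-1,L}(N)\langle\langle u_1^{-1},\dots,u_L^{-1}\rangle\rangle$ and kill the unwanted terms by showing they lie in $L(N)$ (via $I^-(N)$ and $\diag E_{NN}$-invariance), whereas the paper argues via congruences modulo $I^+(N)$ and the fact that the entries $F_{kl}$ normalize that ideal. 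One small notational slip: what you write as $\pi_{N,N-1}(M_{ij})$ must be read as $\pi_{N,N-1}$ applied to the Schur-complement entry $(A-Bd^{-1}C)_{ij}$, since $\pi_{N,N-1}$ is not defined on the blocks $B$ and $C$ of $M$ themselves.
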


\begin{proof}

(1)  We will show that 
\begin{equation}\label{eq6.B1}
\wt T_{ij}(N;s)- \wt T_{ij}(N-1;s-1)\in I^+(N), \quad 1\le i,j\le N-1,
\end{equation}
and
\begin{equation}\label{eq6.B2}
\wt T_{NN}(N;s)-1\in I^+(N),
\end{equation}
meaning that these relations hold componentwise. 

Once this is done, \eqref{eq6.B1} gives the consistency relation \eqref{eq6.O1}. Next, together with \eqref{eq6.B2} this also gives the second consistency relation, \eqref{eq6.O2}, because
$$
\tr(\wt T(N;s)-1)=\sum_{i=1}^{N-1}(\wt T_{ii}(N;s)-1)+(\wt T_{NN}(N;s)-1)
$$
and
$$
\tr(\wt T(N-1;s-1)-1)=\sum_{i=1}^{N-1}(\wt T_{ii}(N;s-1)-1).
$$

(2) Recall a well-known formula for the inverse of a generic $2\times2$ block matrix:
$$
\begin{bmatrix} A & B \\ C & D \end{bmatrix}^{-1}=
\begin{bmatrix} (A-BD^{-1}C)^{-1} & *\\ * & (D-CA^{-1}B)^{-1} \end{bmatrix},
$$
see e.g. \cite[sect. 0.7.3]{HJ} (we do not need the explicit expression for the off-diagonal corners). It follows
\begin{equation}\label{eq6.C} 
\left(1-\begin{bmatrix} A & B \\ C & D \end{bmatrix}\right)^{-1}=
\begin{bmatrix} (1-A-B(1-D)^{-1}C)^{-1} & *\\ * & (1-D-C(1-A)^{-1}B)^{-1} \end{bmatrix}.
\end{equation}

(3) Recall that $\wt T(N;s)$ is an $N\times N$ matrix. Let us treat it now as a $2\times 2$ block matrix according to the decomposition $N=(N-1)+1$. The form of the matrix is the same as on the left-hand side of \eqref{eq6.C}, where $A$ is a squared matrix of format $(N-1)\times (N-1)$, with the entries
\begin{equation}\label{eq6.G1}
A_{kl}=\sum_{\w=1}^L\frac{E_{kl\mid\w}}{u_\w+s}, \qquad 1\le k,l\le N-1,
\end{equation}
$B$ is a column vector with the coordinates
\begin{equation}\label{eq6.G2}
B_k=\sum_{\w=1}^L\frac{E_{kN\mid\w}}{u_\w+s}, \qquad 1\le k\le N-1,
\end{equation}
$C$ is a row vector with coordinates
\begin{equation}\label{eq6.G3}
C_l=\sum_{\w=1}^L\frac{E_{Nl\mid\w}}{u_\w+s}, \qquad 1\le l\le N-1,
\end{equation}
and $D$ is a ``scalar'' (in our situation this means an element of the base algebra \eqref{eq6.D}), 
\begin{equation}\label{eq6.G4}
D=\sum_{\w=1}^L\frac{E_{NN\mid\w}}{u_\w+s}.
\end{equation}
The fact that the entries of our matrices are elements of a noncommutative algebra does not affect the applicability of the inversion formula \eqref{eq6.C}. 

(4) From \eqref{eq6.C} we see that
$$
\wt T_{NN}(N;s)-1=\big(1-D-C(1-A)^{-1}B\big)^{-1}-1
=\sum_{n=1}^\infty\left(D+\sum_{m=0}^\infty CA^mB\right)^n.
$$
This element lies in $I^+(N)$ because both  $D$ and the entries of $B$ are in $I^+(N)$ (here it is important that the summation over $n$ starts with $n=1$ and not $n=0$). This proves \eqref{eq6.B2}.  

(5) We proceed to verifying \eqref{eq6.B1}. Set
$$
\wt A:=A+B(1-D)^{-1}C.
$$
We have 
$$
\wt T_{ij}(N;s)=\big((1-\wt A)^{-1}\big)_{ij}, \quad 1\le i,j\le N-1.
$$
On the other hand, one can write
$$
\wt T_{ij}(N-1;s-1)=\big((1-F)^{-1}\big)_{ij}, 
$$
where $F$ stands for the $(N-1)\times (N-1)$ matrix with the entries
$$
F_{kl}=\sum_{\al=1}^L\frac{E_{kl\mid\al}}{u_\al+s-1}, \qquad 1\le k,l\le N-1,
$$

We are going to show that the matrices $\wt A$ and $F$ are congruent modulo $I^+(N)$, meaning that
\begin{equation}\label{eq6.F}
\wt A_{kl}\equiv F_{kl} \quad \mod I^+(N), \quad 1\le k,l\le N-1.
\end{equation}
Because the elements $F_{kl}$ normalize the left ideal $I^+(N)$, it will follow that the matrices 
$$
(1-\wt A)^{-1}=1+\sum_{n=1}^\infty \wt A^n \quad \text{and} \quad  (1-F)^{-1}=1+\sum_{n=1}^\infty F^n
$$
are also congruent modulo $I^+(N)$, which is just the claim of \eqref{eq6.B1}. 

(6) Thus, we have to compute the entries $\wt A_{kl}$ modulo $I^+(N)$ and compare them with the entries $F_{kl}$. We have
\begin{equation}\label{eq6.H}
\wt A_{kl}=A_{kl}+\big(B(1-D)^{-1}C\big)_{kl}.
\end{equation}
The entries $A_{kl}$ are given by \eqref{eq6.G1}. We claim that 
\begin{equation}\label{eq6.I}
\big(B(1-D)^{-1}C\big)_{kl} \equiv \sum_{\al=1}^L \frac{E_{kl\mid\al}}{(u_\al+s)(u_\al+s-1)} \quad  \mod I^+(N).
\end{equation}
This is exactly what we need, because from \eqref{eq6.G1} and \eqref{eq6.I} we get that $\wt A_{kl}$ is congruent $\mod I^+(N)$ to 
$$
\sum_{\al=1}^L \frac{E_{kl\mid\al}}{u_\al+s}+\sum_{\al=1}^L \frac{E_{kl\mid\al}}{(u_\al+s)(u_\al+s-1)}=\sum_{\al=1}^L \frac{E_{kl\mid\al}}{u_\al+s-1}=F_{kl}.
$$
This explains the origin of the shift $s\to s-1$ in \eqref{eq6.B1}. 
 
(7) It remains to check \eqref{eq6.I}. 
In the computation below $\al,\be,\ga$ range over $\{1,\dots,L\}$ while $(k,l)$ is fixed. Using the formulas \eqref{eq6.G2}, \eqref{eq6.G3}, and \eqref{eq6.G4} we write
$$
\big(B(1-D)^{-1}C\big)_{kl}=\sum_{\al,\be} X(k,l;\al,\be),
$$
where
\begin{equation}\label{eq6.E}
 X(k,l;\al,\be):
=(u_\al+s)^{-1} E_{kN\mid\al} \left(1-\sum_\ga E_{NN\mid\ga} (u_\ga+s)^{-1}\right)^{-1} E_{Nl\mid\be} (u_\be+s)^{-1}.
\end{equation}
We are going to move $E_{Nl\mid\be}$ to the left by putting it next to $E_{kN\mid\al}$. The reason is that the elements $E_{NN\mid\ga}$ lie in $I^+(N)$, so that we would like these elements to move to the right end of the formula. 

(8) To do this we need a lemma.

\begin{lemma}
Suppose $Y$, $\eps$, and $v$ are elements of a noncommutative algebra, such that $[Y,\eps]=\eps v$. Then
\begin{equation*}
(1-Y)^{-1} \eps=\eps(1-Y-v)^{-1},
\end{equation*}
provided that $1-Y$ and $1-Y-v$ are invertible.
\end{lemma}

\begin{proof}[Proof of the lemma]
Multiplying the both sides by $1-Y$ on the left and by $1-Y-v$ on the right we reduce the desired equality to
$$
\eps(1-Y-v)=(1-Y)\eps.
$$
Now we have
$$
(1-Y)\eps=\eps-Y\eps=\eps-\eps Y -[Y,\eps]=\eps-\eps Y- \eps v=\eps(1-Y-v),
$$
which completes the proof.
\end{proof} 

(9) We return to \eqref{eq6.E}. Fix $\al$ and $\be$, and set 
$$
Y:=\sum_\ga E_{NN\mid\ga} (u_\ga+s)^{-1}, \quad v:=(u_\be+s)^{-1}, \quad \eps:=E_{Nl\mid\be}.
$$
The relation $[Y,\eps]=\eps v$ is checked using the commutation relation
$$
[E_{NN\mid\ga}, E_{Nl\mid\be}]=\de_{\ga\be} E_{Nl\mid\be},
$$
which holds because $l\ne N$. 
Next, the elements $1-Y$ and $1-Y-v$ are invertible. Thus, we may apply the lemma. Further, we also use the formal identity
$$
(1-Y-v)^{-1}=(1-v)^{-1}(1-Y(1-v)^{-1})^{-1},
$$
which is legitimate because $1-v$ is invertible. 
We also observe that $(1-Y(1-v)^{-1})^{-1}\equiv 1$  modulo the ideal $I^+(N)$ and hence we may neglect this factor. 

It follows that $ X(k,l;\al,\be)$ is congruent to
\begin{gather*}
(u_\al+s)^{-1}E_{kN\mid\al}E_{Nl\mid\be}\left(1-(u_\be+s)^{-1}\right)^{-1}(u_\be+s)^{-1}\\
=E_{kN\mid\al}E_{Nl\mid\be}(u_\al+s)^{-1}\left(1-(u_\be+s)^{-1}\right)^{-1}(u_\be+s)^{-1}.
\end{gather*}
If $\al\ne\be$, then 
$$
E_{kN\mid\al}E_{Nl\mid\be}=E_{Nl\mid\be}E_{kN\mid\al} \equiv 0 \quad \mod I^+(N),
$$
so that only the case $\al=\be$ is relevant. In this  case we have
$$
(u_\al+s)^{-1}\left(1-(u_\al+s)^{-1}\right)^{-1}(u_\al+s)^{-1} = \frac1{(u_\al+s)(u_\al+s-1)}.
$$
Further,
$$
E_{kN\mid\al}E_{Nl\mid\al}=E_{Nl\mid\al}E_{kN\mid\al}+E_{kl\mid\al} -\de_{kl}E_{NN\mid\al}\equiv E_{kl\mid\al}  \quad \mod I^+(N).
$$
It follows that
$$
\sum_{\al,\be} X(k,l;\al,\be)\equiv \sum_\al \frac{E_{kl\mid\al}}{(u_\al+s)(u_\al+s-1)} \quad \mod I^+(N).
$$

This proves \eqref{eq6.I} and completes the whole proof. 
\end{proof}

\subsection{Lifting of the generators $p_{ij}(w)$ and $p(w)$}\label{sect6.6}

We introduce a new notation, by modifying that of Definition \ref{def6.B}. Recall that $s\in\C$ is a parameter. 

\begin{definition}
We set
\begin{equation}\label{eq6.T}
t_{ij}(w; N;s):= \wt t_{ij}(w; N; N+s), \quad t(w;N;s):= \wt t(w;N;N+s).
\end{equation}
In other words, the difference with Definition \ref{def6.B} is that we change the matrix $\wt T(N;s)$ introduced in \eqref{eq6.A} by the matrix
\begin{equation}\label{eq6.T1}
T(N;s):=\left(1-\sum_{\al=1}^\LL\dfrac{E(N)_\al}{u_\al+N+s}\right)^{-1}.
\end{equation}
\end{definition}

The reason for this modification (the shift of the parameter $s$ by $N$) is that we want to keep this parameter $s$ fixed under the projections $\pi_{N,N-1}$. The following theorem is simply a reformulation of Theorem \ref{thm6.A} in the new notation. 

\begin{theorem}\label{thm6.A1}
For any fixed value of the complex parameter $s$, the elements \eqref{eq6.T} are consistent with the projections $\pi_{N,N-1}$. That is, 
\begin{equation}\label{eq6.K1}
\pi_{N,N-1}(t_{ij}(w;N;s))=t_{ij}(w;N-1;s), \quad 1\le i,j\le N-1, 
\end{equation}
and 
\begin{equation}\label{eq6.K2}
\pi_{N,N-1}(t(w;N;s))=t(w;N-1;s).
\end{equation}
\end{theorem}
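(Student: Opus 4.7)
The plan is to deduce Theorem \ref{thm6.A1} directly from Theorem \ref{thm6.A} by substituting the shifted parameter. Since the definition \eqref{eq6.T} sets $t_{ij}(w;N;s)=\wt t_{ij}(w;N;N+s)$, the entire content of Theorem \ref{thm6.A1} should reduce to checking that the shifts $s\mapsto s-1$ appearing in \eqref{eq6.O1}--\eqref{eq6.O2} are exactly canceled by the change of parametrization at the two levels $N$ and $N-1$. So no new computation is needed; only a bookkeeping verification.

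Concretely, I would proceed as follows. First, fix $s\in\C$ and set $s':=N+s$. Applying the first identity of Theorem \ref{thm6.A} with parameter $s'$ in place of $s$ gives
\begin{equation*}
\pi_{N,N-1}\big(\wt t_{ij}(w;N;N+s)\big)=\wt t_{ij}(w;N-1;N+s-1), \qquad 1\le i,j\le N-1.
\end{equation*}
By definition \eqref{eq6.T} applied at level $N$, the left-hand side equals $\pi_{N,N-1}(t_{ij}(w;N;s))$. On the right-hand side, we rewrite $N+s-1=(N-1)+s$, so that definition \eqref{eq6.T} applied at level $N-1$ identifies $\wt t_{ij}(w;N-1;(N-1)+s)=t_{ij}(w;N-1;s)$. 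This is exactly \eqref{eq6.K1}.

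The trace identity \eqref{eq6.K2} is obtained in identical fashion from \eqref{eq6.O2}: substitute $s'=N+s$, then use the definition $t(w;N;s):=\wt t(w;N;N+s)$ on the left at level $N$ and, after rewriting $N+s-1=(N-1)+s$, use the analogous definition at level $N-1$ on the right. There is no genuine obstacle here; the only thing to be careful about is that the shift $s\mapsto s-1$ in Theorem \ref{thm6.A} is compensated by the fact that the reparametrization in \eqref{eq6.T} depends on $N$ and changes by $+1$ when $N$ decreases by $1$. This is, in fact, the whole motivation for the $N$-dependent reparametrization noted in the sentence preceding Theorem \ref{thm6.A1}: it renormalizes the parameter so that stability under $\pi_{N,N-1}$ holds at a fixed value of $s$.
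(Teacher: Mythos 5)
Your proposal is correct and matches the paper exactly: the paper states that Theorem \ref{thm6.A1} is ``simply a reformulation of Theorem \ref{thm6.A} in the new notation,'' and your bookkeeping verification (applying Theorem \ref{thm6.A} with parameter $N+s$ and rewriting $N+s-1=(N-1)+s$) is precisely the intended argument.
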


In the particular case when $L=1$ and $s=0$, the consistency relations \eqref{eq6.K1}, \eqref{eq6.K2} were established in \cite[sect. 2.1.16]{Ols-Limits} (see also the proof of Theorem 8.4.3 in \cite{M}).  The proof given above is different: in contrast to \cite{Ols-Limits}, we manipulated with matrices and generating series, rather than with the elements themselves.

Here is an immediate corollary of Theorem \ref{thm6.A1}. 

\begin{corollary}\label{cor6.C1}

Fix an arbitrary value of the complex parameter $s$.

{\rm(i)} For any word $w\in\W_L$, the sequence $\{t(w;N;s): N=1,2,\dots\}$ gives rise to an element $t(w;s)$ of the algebra $A_{0,L}$, which is a lifting of the element $p(w)\in P_{0,L}$. 

{\rm(ii)} For any $w\in\W_L$, $d\in\Z_{\ge1}$, and bi-index $(ij)$ such that $1\le i,j\le d$, the sequence $\{t_{ij}(w;N;s): N=d+1,d+2,\dots\}$ gives rise to an element $t_{ij}(w;s)$ of the algebra $A_\dd$, which is a lifting of the element $p_{ij}(w)\in P_\dd$.
\end{corollary}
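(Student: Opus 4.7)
\medskip

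\textbf{Proof plan.} The corollary is essentially a repackaging of the theorems and lemmas already established in this section, so my plan is to assemble the three required ingredients in order: membership in the appropriate centralizer at each finite level, consistency under the projections $\pi_{N,N-1}$, and identification of the top-degree term.

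First I would observe that by the definition \eqref{eq6.T}, $t_{ij}(w;N;s)$ and $t(w;N;s)$ are just $\wt t_{ij}(w;N;N+s)$ and $\wt t(w;N;N+s)$. Since $s$ in Definition \ref{def6.B} and in Lemma \ref{lemma6.B} is an arbitrary complex parameter, Corollaries \ref{cor6.A1} and \ref{cor6.B1} apply with $s$ replaced by $N+s$. Hence for each $N$,
$$
t(w;N;s)\in A_{0,L}(N), \qquad t_{ij}(w;N;s)\in A_\dd(N) \text{ for } 1\le i,j\le d\le N-1.
$$

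Next I would invoke Theorem \ref{thm6.A1}, which states precisely that these elements are consistent with the projections $\pi_{N,N-1}$ for any fixed $s$. This gives well-defined elements
$$
t(w;s):=\{t(w;N;s)\}_N\in A_{0,L}, \qquad t_{ij}(w;s):=\{t_{ij}(w;N;s)\}_N\in A_\dd,
$$
which settles the first half of each claim.

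It remains to check the lifting property. Here I would use the transition formula \eqref{eq6.L1}, which (combined with \eqref{eq6.T}) gives
$$
t_{ij}(w;N;s)=e_{ij}(w;N)+\sum_{w'\prec w} c(w,w';N+s,0)\,e_{ij}(w';N),
$$
and the analogous formula with traces via \eqref{eq6.N1}. The key observation is that $w'\prec w$ forces $\ell(w')<\ell(w)$: indeed, in the notation \eqref{eq6.H1}--\eqref{eq6.H2} one has $\ell(w')=\sum_i(m_i-r_i)$, with strict inequality as soon as some $r_i>0$, which is exactly the condition $w'\ne w$. Therefore all correction terms have filtration degree strictly less than $\ell(w)$, and $t_{ij}(w;N;s)$ has the same image as $e_{ij}(w;N)$ in the degree-$\ell(w)$ component of $\gr A_\dd(N)=P_\dd(N)$ (Lemma \ref{lemma3.A}). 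By the very definition \eqref{eq6.J} of $e_{ij}(w;N)$ and the formula \eqref{eq4.D} for $p_{ij}(w;N)$, this top-degree term is precisely $p_{ij}(w;N)$; the same argument handles the traces and gives $p(w;N)$.

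Finally, I would pass to the projective limit. Proposition \ref{prop3.A} identifies $\gr A_\dd$ with $P_\dd$, and under this identification the degree-$n$ component of $\gr A_\dd$ is the projective limit of the degree-$n$ components of $P_\dd(N)$. Since at each level $N$ the top-degree term of $t_{ij}(w;N;s)$ is $p_{ij}(w;N)$, and by Lemma \ref{lemma4.C1} the sequence $\{p_{ij}(w;N)\}_N$ represents $p_{ij}(w)\in P_\dd$, the element $t_{ij}(w;s)$ is a lifting of $p_{ij}(w)$ in the sense of Definition \ref{def4.A}. The same reasoning, using Lemma \ref{lemma4.C} in place of Lemma \ref{lemma4.C1}, shows that $t(w;s)$ lifts $p(w)$. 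I do not anticipate any genuine obstacle: the whole argument is a bookkeeping combination of previously proved statements, with the only substantive remark being the inequality $\ell(w')<\ell(w)$ for $w'\prec w$, which guarantees that the parameter-dependent correction terms in \eqref{eq6.L1} do not disturb the top-degree behavior.
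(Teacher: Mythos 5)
Your proposal is correct and follows essentially the same route the paper intends: the paper treats this as an immediate consequence of Theorem \ref{thm6.A1} (consistency) together with the already-noted facts that $\wt t_{ij}(w;N;s)$ and $\wt t(w;N;s)$ lift $p_{ij}(w;N)$ and $p(w;N)$ at each finite level, combined with Proposition \ref{prop3.A} and Lemmas \ref{lemma4.C}, \ref{lemma4.C1}. Your extra remark that $w'\prec w$ forces $\ell(w')<\ell(w)$ is a correct (and welcome) justification of the step the paper labels ``evidently.''
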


Together with Theorems \ref{thm4.A} and \ref{thm4.B}, this in turn leads to the following assertion.

\begin{corollary}\label{cor6.C2}
Fix again an arbitrary value $s\in\C$.

{\rm(i)} Let\/ $\W'_L\subset\W_L$ be an arbitrary set of representatives of the circular words $\ww\in\CW_L$. Then $\{t(w;s): w\in\W'_L\}$ is a system of generators of the algebra $A_{0,L}$.

{\rm(ii)} Adding to these generators the elements of the form $t_{ij}(w;s)$, where $w$ ranges over $\W_L$ and $1\le i,j\le d$, we obtain a system of generators of the algebra $A_\dd$. 
\end{corollary}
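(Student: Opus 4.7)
My plan is to reduce both statements to an application of the PBW-type lifting principle encoded in Lemma \ref{lemma4.D}, using the free generation theorems for the associated graded algebras together with Corollary \ref{cor6.C1}.

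For part (i), the first observation is that the element $p(w)\in P_{0,L}$ depends only on the circular word $\ww\in\CW_L$ associated with $w\in\W_L$; consequently the assignment $w\mapsto p(w)$ descends to a bijection from $\W'_L$ onto $\CW_L$. Hence $\{p(w):w\in\W'_L\}$ coincides, up to this relabelling, with the system $\{p(\ww):\ww\in\CW_L\}$ which by Theorem \ref{thm4.A} is a free family of algebraically independent homogeneous generators of the graded commutative algebra $P_{0,L}$. By Corollary \ref{cor6.C1}(i), each $t(w;s)$ is a lifting of $p(w)$ to the filtered algebra $A_{0,L}$ in the sense of Definition \ref{def4.A}. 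Combining this with the identification $\gr A_{0,L}\simeq P_{0,L}$ coming from Proposition \ref{prop3.A}, Lemma \ref{lemma4.D} applies: upon choosing any total order on $\W'_L$, the unit together with the ordered monomials in $\{t(w;s):w\in\W'_L\}$ forms a vector-space basis of $A_{0,L}$ compatible with the filtration. In particular, the family $\{t(w;s):w\in\W'_L\}$ generates $A_{0,L}$ as an algebra, which is the claim.

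For part (ii), I would argue in an entirely parallel way. By Theorem \ref{thm4.B}, the combined family
$$
\{p(\ww):\ww\in\CW_L\}\;\cup\;\{p_{ij}(w):w\in\W_L,\;1\le i,j\le d\}
$$
is a free system of algebraically independent homogeneous generators of $P_\dd$. Using the relabelling of circular words by their representatives in $\W'_L$ as above, this becomes the family
$$
\{p(w):w\in\W'_L\}\;\cup\;\{p_{ij}(w):w\in\W_L,\;1\le i,j\le d\}.
$$
Corollary \ref{cor6.C1}(i)--(ii) provides a simultaneous lifting of every element of this system to $A_\dd$, with $t(w;s)$ lifting $p(w)$ (viewed in $A_\dd$ via the embedding $A_{0,L}\hookrightarrow A_\dd$) and $t_{ij}(w;s)$ lifting $p_{ij}(w)$. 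Using the identification $\gr A_\dd\simeq P_\dd$ from Proposition \ref{prop3.A} and the finite-dimensionality of the filtration pieces $A_\dd^{(n)}$ recorded in section \ref{sect4.4}, Lemma \ref{lemma4.D} again applies and shows that the ordered monomials in our lifted system form a basis of $A_\dd$; a fortiori the system generates $A_\dd$.

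There is no real obstacle here: the statement is an essentially immediate consequence of work already done in sections \ref{sect4}--\ref{sect6}. The only points that must be handled with care are the verification that $\W'_L\to\CW_L$ is a bijection (so we are not losing generators) and the verification that the hypotheses of Lemma \ref{lemma4.D} — notably finite-dimensionality of each filtered piece, together with the identification $\gr A_\dd\simeq P_\dd$ — are in force in our setting. Both are supplied by earlier material in the paper, so the argument reduces to assembling these ingredients.
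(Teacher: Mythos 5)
Your argument is correct and is essentially the route the paper intends: Corollary \ref{cor6.C2} is stated there as a direct consequence of Corollary \ref{cor6.C1} together with Theorems \ref{thm4.A} and \ref{thm4.B}, i.e.\ the elements $t(w;s)$ and $t_{ij}(w;s)$ are liftings of the free homogeneous generators of $\gr A_{0,L}\simeq P_{0,L}$ and $\gr A_\dd\simeq P_\dd$ (Proposition \ref{prop3.A}), so the PBW-type Lemma \ref{lemma4.D} yields a monomial basis and hence generation. Your care about the bijection $\W'_L\to\CW_L$ and the finite-dimensionality hypotheses of Lemma \ref{lemma4.D} matches the ingredients already supplied in sections \ref{sect3}--\ref{sect4}, so nothing is missing.
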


Finally, we will need the following formula. 

\begin{corollary}
For any $s, s'\in\C$, one has
\begin{equation}\label{eq6.O}
t_{ij}(w;s)=t_{ij}(w;s')+\sum_{w':\,w'\prec w} c(w,w';s, s') t_{ij}(w';s'),  
\end{equation}
where the coefficients $c(w,w';s,s')$ are the same as in \eqref{eq6.H3}. 
\end{corollary}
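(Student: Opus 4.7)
My plan is to derive this corollary directly from Lemma \ref{lemma6.B}(i) via the substitution defined in \eqref{eq6.T}. The key observation that makes everything trivial is that the transition coefficients $c(w,w';s,s')$ depend only on the difference $s'-s$, not on $s$ and $s'$ individually; this is visible from the explicit formula
$$
c(w,w';s,s')=(s'-s)^{r_1+\dots +r_k}\binom{m_1-1}{r_1}\dots\binom{m_k-1}{r_k}.
$$

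First I would apply the second relation in \eqref{eq6.H5}, with $s$ replaced by $N+s$ and $s'$ replaced by $N+s'$, to obtain, for every $N\ge\max(i,j)$,
$$
\wt t_{ij}(w;N;N+s)=\wt t_{ij}(w;N;N+s')+\sum_{w'\prec w} c(w,w';N+s,N+s')\,\wt t_{ij}(w';N;N+s').
$$
Because $c(w,w';N+s,N+s')=c(w,w';s,s')$, and because by definition \eqref{eq6.T} we have $t_{ij}(w;N;s)=\wt t_{ij}(w;N;N+s)$ (and similarly for $s'$), this identity reads
$$
t_{ij}(w;N;s)=t_{ij}(w;N;s')+\sum_{w'\prec w} c(w,w';s,s')\,t_{ij}(w';N;s').
$$

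To conclude, I would invoke Corollary \ref{cor6.C1}: for each fixed $s$, the family $\{t_{ij}(w;N;s):N\ge d+1\}$ is consistent under the morphisms $\pi_{N,N-1}$ and assembles into an element $t_{ij}(w;s)\in A_\dd$. The displayed equality is a \emph{finite} linear combination with coefficients independent of $N$, so passing to the projective limit term-by-term yields the desired identity
$$
t_{ij}(w;s)=t_{ij}(w;s')+\sum_{w'\prec w} c(w,w';s,s')\,t_{ij}(w';s')
$$
in the algebra $A_\dd$.

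There is no real obstacle here: the only thing one might stumble over is failing to notice that $c(\,\cdot\,,\,\cdot\,;s,s')$ is translation-invariant in its two scalar arguments, which is what makes the parameter shift $s\mapsto N+s$ in \eqref{eq6.T} harmless when comparing two values of $s$. Once that is observed, the corollary is just Lemma \ref{lemma6.B}(i) re-expressed in the normalization used for the projective limit.
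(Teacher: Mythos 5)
Your proposal is correct and follows essentially the same route as the paper: prove the identity at each level $N$ via \eqref{eq6.H5}, using that $c(w,w';s,s')$ depends only on $s'-s$ so the shift by $N$ in \eqref{eq6.T} is harmless, then pass to the projective limit. The only difference is that you spell out the limit step explicitly, which the paper leaves implicit.
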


\begin{proof} It suffices to prove the similar relations for the elements \eqref{eq6.T}. But this results from the relations \eqref{eq6.H5}, because, for $w$ and $w'$ fixed, the transition coefficients $c(w,w';s,s')$ really depend on the difference $s-s'$ only, so the shift of the parameter by $N$ in \eqref{eq6.T} does not affect them.
\end{proof}

Likewise, we have
$$
t(w;s)=t(w;s')+\sum_{w':\,w'\prec w} c(w,w';s,s') t(w';s'),
$$
but we will use only \eqref{eq6.O}.

\section{The algebra $Y_{d,L}$}\label{sect7}

\subsection{A stability effect in $U(\gl(N,\C)^\oL)$}

Let us explain in advance the notation used in  the next lemma:

\begin{itemize}

\item[(i)] $(w,\wt w)$ is a fixed ordered pair of words from $\W_L$;

\item[(ii)] the set $\odd\subset (\W_L\cup\{\varnothing\})\times (\W_L\cup\{\varnothing\})$ consists of ordered pairs $(z',z'')$ of words such that 
$$
\ell(z')+\ell(z'')< \ell(w)+\ell(\wt w)
$$ 
and the difference 
\begin{equation}\label{eq7.A1}
(\ell(w)+\ell(\wt w))-(\ell(z')+\ell(z''))
\end{equation}
is odd; here one of the words $z',z''$ (but only one!) is allowed to be empty; then we adopt the convention that the corresponding term $e_{ab}(\varnothing;N)$ is the constant $\de_{ab}$;

\item[(iii)]  the set $\even\subset (\W_L\cup\{\varnothing\})\times (\W_L\cup\{\varnothing\})$ is defined analogously, only the quantity \eqref{eq7.A1} is assumed to be even; 
\item[(iv)] recall that the elements $e_{ij}(w;N)\in U(\gl(N,C)^{\oplus L})$ are defined by \eqref{eq6.J}. 

\end{itemize}

\begin{lemma}[Stability Lemma]\label{lemma7.A} 
Fix an arbitrary ordered pair of words $(w,\wt w)\in \W_L\times \W_L$. There exist coefficients
\begin{equation}\label{eq7.A2}
\varphi_r(w,\wt w;z',z'')\in\Z, \quad \text{\rm where $r=1,2,3,4$ and $(z',z'')\in\odd$},
\end{equation}
and 
\begin{equation}\label{eq7.A3}
\psi_r(w,\wt w;z',z'')\in\Z, \quad \quad \text{\rm where $r=1,2,3,4$ and $(z',z'')\in\even$},
\end{equation}
such that for any indices $i,j,k,l$ and any $N\ge\max(i,j,k,l)$, the commutator of the elements $e_{ij}(w;N)$ and $e_{kl}(\wt w;N)$ can be written in any of the following four ways{\rm:}
\begin{multline}\label{eq7.B1}
[e_{ij}(w;N),e_{kl}(\wt w;N)]=\sum_{(z',z'')\in\odd} \varphi_1(w,\wt w; z',z'') e_{kj}(z';N)e_{il}(z'';N)\\
+\sum_{(z',z'')\in\even} \psi_1(w,\wt w; z',z'') e_{ij}(z';N)e_{kl}(z'';N), 
\end{multline}
\begin{multline}\label{eq7.B2}
[e_{ij}(w;N),e_{kl}(\wt w;N)]=\sum_{(z',z'')\in\odd} \varphi_2(w,\wt w; z',z'') e_{kj}(z';N)e_{il}(z'';N)\\
+\sum_{(z',z'')\in\even} \psi_2(w,\wt w; z',z'') e_{kl}(z';N)e_{ij}(z'';N), 
\end{multline}
\begin{multline}\label{eq7.B3}
[e_{ij}(w;N),e_{kl}(\wt w;N)]=\sum_{(z',z'')\in\odd} \varphi_3(w,\wt w; z',z'') e_{il}(z';N)e_{kj}(z'';N)\\
+\sum_{(z',z'')\in\even} \psi_3(w,\wt w; z',z'') e_{ij}(z';N)e_{kl}(z'';N), 
\end{multline}
\begin{multline}\label{eq7.B4}
[e_{ij}(w;N),e_{kl}(\wt w;N)]=\sum_{(z',z'')\in\odd} \varphi_4(w,\wt w; z',z'') e_{il}(z';N)e_{kj}(z'';N)\\
+\sum_{(z',z'')\in\even} \psi_4(w,\wt w; z',z'') e_{kl}(z';N)e_{ij}(z'';N). 
\end{multline}

Moreover, the coefficients \eqref{eq7.A2} and \eqref{eq7.A3} with these properties are defined uniquely. 
\end{lemma}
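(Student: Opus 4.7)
My plan is to establish existence by induction on the total length $M := \ell(w) + \ell(\wt w)$, and then derive uniqueness from a linear-independence argument using the results of Section \ref{sect4}. For the base case $M = 2$, both words are single letters, say $w = \al$ and $\wt w = \be$. Then $e_{ij}(\al;N) = E_{ij\mid\al}$, and
\[
[E_{ij\mid\al}, E_{kl\mid\be}] = \de_{\al\be}\bigl(\de_{jk} E_{il\mid\al} - \de_{li} E_{kj\mid\al}\bigr).
\]
Using the convention $e_{ab}(\varnothing;N) = \de_{ab}$, this rewrites as a sum of $e_{kj}(\varnothing;N) e_{il}(\al;N)$ and $e_{kj}(\al;N) e_{il}(\varnothing;N)$ (or the reversed orderings), all lying in $\odd$ with length difference $1$ and integer coefficients $\pm\de_{\al\be}$ manifestly independent of $i,j,k,l,N$; hence all four forms are realised at length $2$.

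For the inductive step ($M \geq 3$), the antisymmetry $[X,Y] = -[Y,X]$ lets us assume $m := \ell(w) \geq 2$. Setting $\hat w := w_1 \cdots w_{m-1}$, the recursion $e_{ij}(w;N) = \sum_a e_{ia}(\hat w;N)\, E_{aj\mid w_m}$ together with the Leibniz rule gives
\[
\begin{aligned}
[e_{ij}(w;N), e_{kl}(\wt w;N)] &= \sum_{a=1}^N e_{ia}(\hat w;N)\,[E_{aj\mid w_m}, e_{kl}(\wt w;N)] \\
&\quad + \sum_{a=1}^N [e_{ia}(\hat w;N), e_{kl}(\wt w;N)]\, E_{aj\mid w_m}.
\end{aligned}
\]
Each inner commutator has total length strictly less than $M$, so by IH it admits a normal-form expansion with $N$-independent integer coefficients. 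After substituting and summing over $a$, every expression of the shape $\sum_a e_{pa}(z;N)\, E_{aq\mid\ga}$ collapses into the single $e_{pq}(z\ga;N)$ (and analogously from the left), while the quadratic contributions from IH produce products of two $e$-elements. Crucially, no residual $N$-dependence survives: the coefficients remain integer combinations of the Kronecker symbols $\de_{w_r,\wt w_s}$ inherited from the basic commutator. This realises one of the four forms; the remaining three are then obtained by swapping factors via $e_{ab}(z_1;N) e_{cd}(z_2;N) = e_{cd}(z_2;N) e_{ab}(z_1;N) + [e_{ab}(z_1;N), e_{cd}(z_2;N)]$ and reinvoking IH, the commutator on the right having total length $\ell(z_1)+\ell(z_2) < M$.

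The main obstacle is the parity accounting in $\odd$ versus $\even$. A primitive Leibniz contraction drops the total word length by exactly one and produces a \emph{crossed} index arrangement ($e_{kj}\cdot e_{il}$ or $e_{il}\cdot e_{kj}$); swapping a crossed pair and invoking IH on the resulting shorter commutator contributes its own odd-drop crossed terms --- which therefore land at an even total drop from $M$ and carry a \emph{straight} index pattern ($e_{ij}\cdot e_{kl}$ or $e_{kl}\cdot e_{ij}$) --- together with even-drop straight contributions, which land at an odd total drop with crossed indices once more. Thus the parity of the length difference \eqref{eq7.A1} remains locked to the crossed/straight dichotomy of the index arrangement at every step, matching the statement of the lemma; organising this tracking cleanly, so that the accumulated expression fits exactly one of the four prescribed forms, is the technical heart of the argument.

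For uniqueness, I would fix four pairwise distinct indices $i,j,k,l$ and choose $N$ large enough that Lemma \ref{lemma4.B1} (asymptotic algebraic independence) and the PBW-type basis of Lemma \ref{lemma4.D} both apply in the degree at hand. For such indices, the products $e_{kj}(z';N)\, e_{il}(z'';N)$ with $(z',z'')\in\odd$ and $e_{ij}(z';N)\, e_{kl}(z'';N)$ with $(z',z'')\in\even$ have distinct top-degree symbols in the free commutative algebra $P_\dd(N)$ on algebraically independent generators, and hence are linearly independent in $U(\gl(N,\C)^{\oplus L})$. Since the coefficients are postulated to be independent of $i,j,k,l,N$, uniqueness at this one special choice of indices forces uniqueness in general; identical reasoning disposes of Forms 2, 3, and 4.
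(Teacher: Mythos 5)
Your existence argument follows the paper's own proof almost verbatim: induction on the total length, reduction to $\ell(w)\ge 2$ via the swap symmetry, a Leibniz splitting of $w$, collapse of the sum over the auxiliary index $a$, and the crossed/straight parity bookkeeping. One step you leave implicit should be made explicit: after splitting, each of the two inner commutators must be expanded using the \emph{particular} one of the four inductive forms that places the summation index $a$ in the factor adjacent to the outer factor (one commutator via the pattern of \eqref{eq7.B3}, the other via \eqref{eq7.B4}, as in the paper). This choice is exactly what makes every $a$-sum collapse into a single concatenated element and, at the same time, is what excludes any $\de_{aa}$-type contraction, i.e.\ any hidden factor of $N$; without stating it, the assertion that ``no residual $N$-dependence survives'' is not yet justified, although the needed freedom is of course provided by the induction hypothesis.

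The genuine gap is in your uniqueness argument. Specializing to pairwise distinct $i,j,k,l$ annihilates every term in which $z'$ or $z''$ is the empty word, because such terms carry the factors $\de_{kj}$, $\de_{il}$, $\de_{ij}$ or $\de_{kl}$, which all vanish for distinct indices. Hence this single specialization, together with Lemma \ref{lemma4.B1}, pins down only the coefficients of the genuinely quadratic terms; the coefficients attached to pairs $(z',z'')\in\odd\cup\even$ with one empty word remain completely undetermined, so the claim that uniqueness ``at this one special choice of indices forces uniqueness in general'' fails precisely for the linear terms. The paper closes this by two further specializations: $i=j=k=1$, $l=2$ to isolate the terms with $z'=\varnothing$, and $i=k=l=1$, $j=2$ for those with $z''=\varnothing$, again invoking Lemma \ref{lemma4.B1}. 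A minor additional point: since the monomials involved are not all of one degree, the passage to symbols in $P_\dd(N)$ should be organized degree by degree (look at the highest-degree terms occurring with nonzero coefficients, as the paper does); as written, ``distinct top-degree symbols, hence linearly independent'' is too quick, though easily repaired.
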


\noindent\emph{Comments.}  1. The expressions on the right-hand side of the formulas are \emph{stable} in the sense that the coefficients  do not depend on $N$. This is the reason why we call the result the \emph{Stability Lemma}. The coefficients are also \emph{universal} in the sense that they do not depend on the concrete values of the indices $i,j,k,l$. 

2. A way to distinguish the four versions is the following. In the first formula, on the right-hand side, $j$ precedes $l$ in both sums. Likewise, in the second formula, $k$ precedes $i$; in the third formula, $i$ precedes $k$; in the fourth formula, $l$ precedes $j$.  

3. The collection of the four formulas \eqref{eq7.B1} -- \eqref{eq7.B4} possesses a symmetry. Namely, swapping the triples $(i,j;w)\leftrightarrow(k,l;\wt w)$ results in a permutation of the formulas: \eqref{eq7.B1} is interchanged with \eqref{eq7.B4}, and \eqref{eq7.B2} is interchanged with \eqref{eq7.B3}. However, the symmetry is broken when the formulas are considered separately.

\begin{proof}
To shorten the notation, we drop the parameter $N$. We first prove the existence claim and then the uniqueness claim. 
In the proof of the existence claim, we are dealing with all four formulas together and proceed by induction on $n:=\ell(w)+\ell(\wt)$.  

(1) The base of the induction is $n=2$. This means $\ell(w)=\ell(\wt w)=1$, so that $e_{ij}(w)=E_{ij\mid\w}$ and $e_{kl}(\wt w)=E_{kl\mid\be}$ for some $\w,\be\in[L]$. Then we have 
\begin{equation}\label{eq7.FF}
[E_{ij\mid\w},E_{kl\mid\be}]=\begin{cases} 0, & \w\ne\be,\\
\de_{kj}E_{il\mid\w}-\de_{il}E_{kj\mid\w}, & \w=\be, \end{cases}
\end{equation}
which agrees with all four formulas. In this case, the sums over $(z;,z'')\in \even$ are absent. 

(2) Let $n\ge3$ and suppose that our formulas hold true for all pairs of words $w,\wt w$ with $\ell(w)+\ell(\wt w)<n$, and show that the formulas hold true when $\ell(w)+\ell(\wt w)=n$. 

Because $n\ge3$, at least one of the numbers $\ell(w)$, $\ell(\wt w)$ is $\ge2$. Due to the symmetry mentioned above, it suffices to examine the case when $\ell(w)\ge2$.  

Let us split $w$ into two nonempty subwords: $w=w'w''$ (for instance, separate from the word its first or last letter).  Then we have
$$
e_{ij}(w)=\sum_{a=1}^N e_{ia}(w')e_{a j}(w'')
$$
and 
$$
[e_{ij}(w), e_{kl}(\wt w)]=\sum_{a=1}^N [e_{ia}(w'), e_{kl}(\wt w)]e_{a j}(w'')+\sum_a e_{ia}(w')[e_{a j}(w''), e_{kl}(\wt w)].
$$
By virtue of the inductive assumption we may handle each of the two commutators on the right-hand side in accordance with any of the four formulas, to our choice.  We will dispose of this freedom in the following way: to the first commutator, we apply \eqref{eq7.B4} (with the understanding that $j$ is renamed by $a$), so that the index $a$ will move to the right end; while to the second commutator, we apply  \eqref{eq7.B3}, so that $a$ will move to the left end. 

As the result (after collecting possible similar terms) we obtain a $\Z$-linear combination of a number of elements of the form 
\begin{equation}\label{eq7.F}
\begin{gathered}
\sum_{a=1}^N e_{il}(z') e_{ka}(z'') e_{a j}(w''), \quad  \sum_{a=1}^N e_{kl}(z') e_{ia}(z'')e_{a j}(w''),\\
\sum_{a=1}^N e_{ia}(w') e_{a l}(z') e_{kj}(z''), \quad  \sum_{a=1}^N e_{ia}(w') e_{a j}(z') e_{kl}(z''),
\end{gathered}
\end{equation}
where in each product, the symbols $z'$ and $z''$ denote some words, of which one may be empty (but not both). 

These expressions can be rewritten as
\begin{equation}\label{eq7.F1}
\begin{gathered}
e_{il}(z')e_{kj}(z''w''), \quad e_{kl}(z') e_{ij}(z''w''), \\
e_{il}(w'z')e_{kj}(z''), \quad e_{ij}(w'z') e_{kl}(z''),
\end{gathered}
\end{equation}
respectively.   

If $z'$ or $z''$ is empty, then two of these four quadratic elements degenerate to linear ones, but in any case, constant terms cannot arise. 
Observe that the quadratic (or linear) elements in \eqref{eq7.F1} have total degree less than $n$. This enables us to further transform them to any of the four required forms, by using again the inductive assumption. 

For instance, to achieve the form \eqref{eq7.B1} we have to transform expressions of the form $e_{il}(\ccdot)e_{kj}(\ccdot)$ and $e_{kl}(\ccdot) e_{ij}(\ccdot)$. 
To handle $e_{il}(\ccdot)e_{kj}(\ccdot)$, we write 
$$
e_{il}(\ccdot)e_{kj}(\cdot)=e_{kj}(\ccdot)e_{il}(\cdot)+[e_{il}(\ccdot),e_{kj}(\cdot)].
$$
The first summand already has the form \eqref{eq7.B1}. Next, the commutator can be written in accordance to a suitable formula, which in this case is \eqref{eq7.B4}.  Finally, to handle $e_{kl}(\ccdot) e_{ij}(\ccdot)$, we use the same recipe.

(3) Let us take a closer look at the transformation of indices. Initially, we have two bi-indices, $ij$ and $kl$. The commutator of two elements with such bi-indices leads to a regrouping: we get two new bi-indices, $kj$ and $il$. Taking a commutator again leads us back to $ij$ and $kl$. Next, note that after each commutation, the total degree reduces by $1$. It follows that in  the $\varphi$-coefficients, the difference \eqref{eq7.A1} is odd, while in the $\psi$-coefficients, it is even. 
 
(4) The procedure described above does not depend in any way on the specific value of the parameter $N$, which implies the desired stability property. Indeed,  dependence on the parameter $N$ could arise if we would have a factor of the form $\de_{aa}$ under the sign of summation over $a=1,\dots,N$ in \eqref{eq7.F} or other similar expressions. The point is that such a situation is excluded. 

This completes the proof of the existence claim. 

(5) We proceed to the uniqueness claim. Let us prove it for the first formula, \eqref{eq7.B1}; for other three formulas the argument is the same.  It suffices to show that the only relation of the form
\begin{multline}\label{eq7.B5}
\sum_{(z',z'')\in\odd} \varphi(w,\wt w; z',z'') e_{kj}(z';N)e_{il}(z'';N) \\
+\sum_{(z',z'')\in\even} \psi(w,\wt w; z',z'') e_{ij}(z';N)e_{kl}(z'';N)=0,
\end{multline}
where the coefficients do not depend on the parameter $N$ and on the indices $i,j,k,l$,   
is the trivial one: all coefficients must vanish.

Recall that it may happen that $z'$ or $z''$ is empty, but not both. If $z'=\varnothing$, then the factors $e_{kj}(z';N)$ and $e_{ij}(z';N)$ turn into Kronecker deltas, $\de_{kj}$ and $\de_{ij}$, respectively. Likewise, if $z''=\varnothing$, then $e_{il}(z'';N)=\de_{il}$ and $e_{kl}(z'';N)=\de_{kl}$. In this case we say that the corresponding terms on the left-hand side of \eqref{eq7.B5} are linear. Otherwise (when both $z'$ and $z''$ are non-empty), the corresponding terms are said to be quadratic. 

We rely on the freedom to choose the indices $i,j,k,l$ in an arbitrary way. Let us take them pairwise distinct, for instance, equal to $1,2,3,4$, respectively. So  \eqref{eq7.B5} can be written as 
\begin{multline}\label{eq7.B6}
\sum_{(z',z'')\in\odd} \varphi(w,\wt w; z',z'') e_{32}(z';N)e_{14}(z'';N) \\
+\sum_{(z',z'')\in\even} \psi(w,\wt w; z',z'') e_{12}(z';N)e_{34}(z'';N)=0.
\end{multline}
Because the indices are distinct here, the Kronecker deltas in the linear terms vanish and hence all linear terms disappear. Thus, we may focus on the quadratic terms. We are going to show that their coefficients in fact vanish. 

Assume the converse and consider the quadratic terms of the highest possible degree, which enter \eqref{eq7.B6} with nonzero coefficients. Keeping only these terms and passing to the symmetric algebra we get a linear relation between quadratic monomials of the form $p_{32}(z';N)p_{14}(z'';N)$ or $p_{12}(z';N)p_{34}(z'';N)$. But these monomials are linear independent for $N$ large enough, as it is seen from Lemma \ref{lemma4.B1}. From this contradiction we conclude that the coefficients of the quadratic terms in \eqref{eq7.B5} in fact vanish. 

Thus, there remain in \eqref{eq7.B5} only linear terms with $z'=\varnothing$ and with $z''=\varnothing$. 
To handle the terms with $z'=\varnothing$, we take $i=j=k=1$, $l=2$. Then the linear terms with $z''=\varnothing$ disappear (because the corresponding Kronecker deltas vanish), while those with $z'=\varnothing$ (that is, of the form $e_{12}(z'';N)$, $z''\ne\varnothing$) persist. Again, with the aid of Lemma \ref{lemma4.B1} we conclude that the coefficients of these terms  must vanish.

Finally, we apply a similar trick for the remaining coefficients --- those with $z''=\varnothing$. To do this, we take $i=k=l=1$, $j=2$.

This completes the proof of the lemma. 
\end{proof}

Recall (Definition \ref{def6.B}) that the elements $e_{ab}(\ccdot;N)$ are a particular case of the more general elements $\wt t_{ab}(\ccdot;N,s)\in U(\gl(N,\C)^\oL)$ which depend on the extra parameter $s\in\C$. Namely, $e_{ab}(\ccdot;N)=\wt t_{ab}(\ccdot;N,0)$. 

\begin{lemma}\label{lemma7.A1}
The assertions of Lemma \ref{lemma7.A} also hold in the greater generality, when all elements of the form $e_{ab}(\ccdot;N)$ are replaced by the corresponding elements of the form $\wt t_{ab}(\ccdot; N,s)$, where $s\in\C$ is arbitrary but fixed. In this case, the $\varphi$- and $\psi$-coefficients become polynomials in $s$. 
\end{lemma}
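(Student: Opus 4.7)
The plan is to derive Lemma~\ref{lemma7.A1} from Lemma~\ref{lemma7.A} via the triangular change-of-basis relations \eqref{eq6.L1} and \eqref{eq6.L2}, whose transition coefficients $c(w, w'; s, 0) = (-s)^{\ell(w) - \ell(w')} \prod_i \binom{m_i - 1}{r_i}$ and its inverse are polynomials in $s$ with integer coefficients that do not depend on $N$. The base case $\ell(w) + \ell(\wt w) = 2$ is immediate, because for single-letter words $\wt t_{ij}(\al; N, s) = E_{ij \mid \al}$ does not depend on $s$, so the formulas \eqref{eq7.B1}--\eqref{eq7.B4} of Lemma~\ref{lemma7.A} transfer verbatim with their original integer coefficients.

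\textbf{Existence of the expansion.} For the inductive step I would first apply \eqref{eq6.L1} to each of the two factors in $[\wt t_{ij}(w; N, s), \wt t_{kl}(\wt w; N, s)]$ and use bilinearity of the bracket to reduce it to a polynomial-in-$s$ combination of commutators $[e_{ij}(w'; N), e_{kl}(\wt w'; N)]$ with $w' \preceq w$ and $\wt w' \preceq \wt w$. Each such inner commutator is rewritten via Lemma~\ref{lemma7.A} in any one of its four canonical forms \eqref{eq7.B1}--\eqref{eq7.B4} as a $\Z$-linear combination of quadratic-or-linear products of $e$-elements. Applying \eqref{eq6.L2} to every $e$-factor in these products then yields the expansion of $[\wt t_{ij}(w; N, s), \wt t_{kl}(\wt w; N, s)]$ as a sum of $\wt t$-products weighted by polynomials in $s$ with integer coefficients. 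Since both substitutions \eqref{eq6.L1} and \eqref{eq6.L2} act diagonally on each factor without altering its outer indices, the four index-regrouping patterns of Lemma~\ref{lemma7.A} are preserved, and the expansion falls naturally into $\varphi$- and $\psi$-sectors.

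\textbf{Parity tracking and uniqueness.} The delicate point is preservation of the $\odd$/$\even$ length-difference parity within each sector, since the $s$-polynomial substitutions a priori mix contributions across lengths. The cleanest way to establish it is to re-run the inductive proof of Lemma~\ref{lemma7.A} directly in the $\wt t$-setting, using the generating-series recursion derived from $\wt T(N; s) = 1 + \sum_\al (u_\al + s)^{-1} E(N)_\al \,\wt T(N; s)$, which produces a splitting $\wt t_{ij}(w; N, s) = \sum_a E_{ia \mid \al} \wt t_{aj}(w_*; N, s) + (\text{$s$-polynomial combination of strictly shorter $\wt t$'s})$, where $w$ begins with the letter $\al$ and $w_*$ denotes the corresponding word of length $\ell(w) - 1$. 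This is the exact analog of the $e$-splitting used in the proof of Lemma~\ref{lemma7.A}, and the two parity-tracking observations of that proof --- each commutation reduces total length by one and toggles the index pattern --- apply verbatim. The $s$-polynomial correction terms contribute only commutators of strictly smaller total length (handled by the inductive hypothesis) and never modify the index pattern, so the $\varphi$/$\psi$-partition and the $\odd$/$\even$-partition both persist. I expect this parity bookkeeping to be the main obstacle, since one must carefully match the $s$-polynomial corrections against the induction hypothesis. Uniqueness of the polynomial coefficients finally follows from Lemma~\ref{lemma4.B1} by specialization: for each $s \in \C$ the uniqueness argument of Lemma~\ref{lemma7.A} determines the complex coefficients uniquely, and since they depend polynomially on $s$, the polynomials themselves are uniquely determined.
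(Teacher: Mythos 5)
Your first paragraph is, in substance, the paper's entire proof: the paper disposes of Lemma \ref{lemma7.A1} in one sentence, saying it follows immediately from the transition formulas \eqref{eq6.L1} and \eqref{eq6.L2} --- i.e. exactly the substitution you describe (expand each $\wt t$-factor in $e$'s, apply Lemma \ref{lemma7.A} to the resulting commutators, convert back via \eqref{eq6.L2}), which yields coefficients that are independent of $N$ and of $i,j,k,l$ and polynomial in $s$, with the four index patterns preserved. Your uniqueness argument (fix $s$, repeat step (5) of the proof of Lemma \ref{lemma7.A}, which only uses the top-degree terms $p_{ab}(\ccdot;N)$ and Lemma \ref{lemma4.B1}, then use that a polynomial is determined by its values) is also fine. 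So the core of your proposal coincides with the paper's route.

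The weak point is the parity bookkeeping, which you rightly single out but do not actually close. Your claim that the $\odd$/$\even$ partition ``persists'' because the $s$-polynomial corrections only involve strictly shorter words and never alter the index pattern is a non sequitur: if a correction carries a length deficit $\delta\ge 1$, the inductive hypothesis applied to the corresponding commutator controls the parity of the length drop relative to $\ell(w)+\ell(\wt w)-\delta$, which is the \emph{wrong} parity relative to $\ell(w)+\ell(\wt w)$ whenever $\delta$ is odd; the same defect afflicts the naive substitution route, since \eqref{eq6.L1} and \eqref{eq6.L2} shift lengths by amounts of either parity. To obtain the literal statement (coefficients supported on $\odd$ and $\even$) one must show that these wrong-parity contributions cancel --- they do in low-degree checks, e.g. for $L=1$, where the $\wt t$'s satisfy the Yangian relations for every $s$, but that requires an argument --- or else settle for the statement without the parity refinement, which is in fact all that is used later: Lemma \ref{lemma7.C} and Theorem \ref{thm7.A} need only the index patterns, the strict drop of total degree, $N$-stability, and polynomial dependence on $s$. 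The paper's one-line proof glosses over this same point, so your instinct about where the delicacy lies is correct, but the fix you sketch does not establish it as written.
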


\begin{proof}
This follows immediately from the transition formulas \eqref{eq6.L1}, \eqref{eq6.L2}.
\end{proof}

\subsection{Commutation relations}\label{sect7.2}

\begin{definition}\label{def7.A}
Let $\Aa$ be an algebra and $\{a(\ga): \ga\in \Ga\}$ be a family of elements of $\Aa$ indexed by a set $\Ga$, which is endowed with a  total order ``$\le$''. Consider $k$-fold products of the form $a(\ga_1)\dots a(\ga_k)$, where $\ga_1,\dots,\ga_k$ is a $k$-tuple of indices from $\Ga$, $k=1,2,\dots$\,.

(i) If $\ga_1\le\dots\le \ga_k$, then we say that $a(\ga_1)\dots a(\ga_k)$  is a \emph{normal monomial}.

(ii) For an arbitrary $k$-tuple $\ga_1,\dots,\ga_k$, we denote by $:\!a(\ga_1)\dots a(\ga_k)\!:$ the normal monomial obtained by rearranging the factors $a(\ga_1),\dots, a(\ga_k)$  in the due order (here we mimic the conventional notation used in Wick's theorem).
\end{definition}

We will apply this notation to elements of the form $\wt t_{ij}(w;N,s)$ in the algebra $U(\gl(N,\C)^\oL)$ and then also to elements of the form $t_{ij}(w;s)$ in the algebra $A_{\infty,L}$. In both cases, the index set $\Ga$ will be the set of triples 
$$
(i,j;w)\in\Z_{\ge1}\times\Z_{\ge1}\times\W_L
$$ 
(the additional parameters $N$ and $s$ are not taken into account here). We fix an arbitrary total order on this set. 

\begin{lemma}\label{lemma7.B}
Normal monomials in $U(\gl(N,\C)^\oL)$ of the form 
\begin{equation}\label{eq7.H}
\wt t_{a_1b_1}(z(1);N,s)\dots \wt t_{a_kb_k}(z(k);N,s)
\end{equation}
are asymptotically linearly independent  in the sense that any finite collection of such monomials becomes linearly independent as $N$ gets large enough.
\end{lemma}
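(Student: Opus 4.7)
The plan is to deduce linear independence in the filtered algebra $U(\gl(N,\C)^\oL)$ from linear independence in the associated graded algebra $S(\gl(N,\C)^\oL)$, and then invoke Lemma \ref{lemma4.B1} to get algebraic independence of the top-degree symbols for $N$ sufficiently large.

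Concretely, suppose we are given a finite collection $\{\mu_\al\}$ of normal monomials of the form \eqref{eq7.H} together with scalars $c_\al \in \C$ (not all zero), and assume for contradiction that $\sum_\al c_\al \mu_\al = 0$ in $U(\gl(N,\C)^\oL)$ for arbitrarily large $N$. First, choose $d$ larger than all the row and column indices appearing in any factor of any $\mu_\al$; then by Corollary \ref{cor6.A1} every $\wt t_{a_i b_i}(z(i);N,s)$ lies in $A_\dd(N)$, and the whole relation takes place in $A_\dd(N)$ with its inherited filtration from $U(\gl(N,\C)^\oL)$.

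Next I would use that the element $\wt t_{ij}(w;N,s)$, being a sum of products of $\ell(w)$ matrix units plus (for $s\neq 0$) a linear combination of $e_{ij}(w';N)$ with $w'\prec w$ by \eqref{eq6.L1}, has filtration degree exactly $\ell(w)$, and its image in $\gr A_\dd(N)\simeq P_\dd(N)$ (Lemma \ref{lemma3.A}) is precisely $p_{ij}(w;N)$. Hence the image of the normal monomial $\mu_\al = \wt t_{a_1 b_1}(z(1);N,s)\cdots\wt t_{a_k b_k}(z(k);N,s)$ in the top graded piece at degree $D_\al := \ell(z(1))+\cdots+\ell(z(k))$ equals the commutative product $p_{a_1 b_1}(z(1);N)\cdots p_{a_k b_k}(z(k);N)\in P_\dd(N)$. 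Let $D$ be the maximum of $D_\al$ over those $\al$ with $c_\al\neq 0$, and let $S$ be the set of such $\al$ achieving this maximum. Passing to the $D$-th graded piece of the relation $\sum_\al c_\al \mu_\al=0$ yields
\begin{equation*}
\sum_{\al\in S} c_\al\, p_{a_1(\al) b_1(\al)}(z(1,\al);N)\cdots p_{a_{k(\al)} b_{k(\al)}}(z(k(\al),\al);N)=0
\end{equation*}
in $P_\dd(N)$.

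The distinct normal monomials $\mu_\al$ with $\al\in S$ correspond to distinct (unordered) multisets of triples $(a_i,b_i,z(i))$; since $P_\dd(N)$ is commutative, these yield distinct commutative monomials in the generators $p_{ij}(w;N)$. By Lemma \ref{lemma4.B1}, for $N$ sufficiently large the generators $p_{ij}(w;N)$ entering into any of our $\mu_\al$ (there are only finitely many of them) are algebraically independent, so distinct monomials in them are linearly independent. This forces $c_\al=0$ for all $\al\in S$, contradicting the choice of $D$. The argument is clean and, in fact, the only mild subtlety is bookkeeping the filtration: checking that $\wt t_{ij}(w;N,s)$ has exact degree $\ell(w)$ with symbol $p_{ij}(w;N)$. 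This is immediate from \eqref{eq6.L1} because the correction terms from $s\neq 0$ all have strictly smaller degree.
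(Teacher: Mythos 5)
Your proof is correct and follows essentially the same route as the paper: pass to the top-degree component of the associated graded algebra, where each $\wt t_{ab}(z;N,s)$ has symbol $p_{ab}(z;N)$ (via \eqref{eq6.L1}), note that distinct normal monomials give distinct commutative monomials in these symbols, and invoke Lemma \ref{lemma4.B1} for their algebraic independence at large $N$. The extra bookkeeping through $A_\dd(N)$ and Lemma \ref{lemma3.A} is harmless but not needed, since the paper works directly with the canonical isomorphism $U^{(n)}/U^{(n-1)}\simeq S^n(\gl(N,\C)^\oL)$.
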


\begin{proof}
We combine the following three facts. First, the top degree term (see Definition \ref{def4.A}) of $\wt t_{ab}(z;N,s)$  is $p_{ab}(z;N)$. Second, we know from Lemma \ref{lemma4.B1} that the elements of the form $p_{ab}(z;N)$, where the indices $a,b$ and the length $\ell(z)$ are bounded by a constant, are algebraically independent provided $N$ is large enough. Third, when the symbols  $\wt t_{ab}(w;N,s)$ are replaced with the corresponding symbols $p_{ab}(w;N)$, distinct normal monomials produce distinct elements in the symmetric algebra $S(\gl(N,\C)^\oL)$. 

Consider now a linear combination of normal monomials with nonzero coefficients. We are going to show that it represents a nonzero element of $U(\gl(N,\C)^\oL)$, under the additional assumption that $N$ is large enough (where the bound depends on the collection of the monomials). 

Indeed, let $n$ denote the maximal total degree of the monomials. Consider the linear isomorphism 
$$
U^{(n)}(\gl(N,\C)^\oL)/U^{(n-1)}(\gl(N,\C)^\oL)\simeq S^n(\gl(N,\C)^\oL),
$$
where $U^{(n)}(\cdots)$ denotes the $n$th term of the canonical filtration and $S^n(\cdots)$ denotes the $n$th homogeneous component.  

From what has been said above it follows that image of our linear combination under this isomorphism is nonzero, which concludes the proof. \end{proof}

In the next two lemmas we suppose that $(i,j,k,l)$ is a quadruple of indices; $w,\wt w\in\W_L$ are two words; $s\in\C$ is fixed;  $N\ge\max(i,j,k,l)$ is varying. 

\begin{lemma}\label{lemma7.C}
The commutator $[\wt t_{ij}(w;N,s),\wt t_{kl}(\wt w;N,s)]$ can be written, in a unique way, as a stable linear combination of pairwise distinct normal monomials of degree strictly less than $\ell(w)+\ell(\wt w)$ and of the form 
\begin{equation}\label{eq7.S1}
\begin{gathered}
\wt t_{ij}(\ccdot;N,s), \quad \wt t_{kl}(\ccdot;N,s), \quad \wt t_{il}(\ccdot;N,s), \quad \wt t_{kj}(\ccdot;N,s), \\
:\!\wt t_{ij}(\ccdot;N,s)\wt t_{kl}(\ccdot;N,s)\!:, \quad :\!\wt t_{il}(\ccdot;N,s)\wt t_{kj}(\ccdot;N,s)\!:. 
\end{gathered}
\end{equation}
As before, the adjective `stable' means that the coefficients of the linear combination in question do not depend on $N$. 
\end{lemma}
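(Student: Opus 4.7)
The plan is to derive the decomposition by combining the Stability Lemma with a normal-ordering step, and to induct on $n := \ell(w) + \ell(\wt w)$. The base case $n = 2$ is handled directly by \eqref{eq7.FF}: the commutator of two matrix units is either zero or a single allowed linear term, and no quadratic contribution arises.

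For the inductive step, I would begin from \eqref{eq7.B1} in its $s$-parameter form (Lemma \ref{lemma7.A1}):
$$[\wt t_{ij}(w;N,s), \wt t_{kl}(\wt w;N,s)] = \sum_{(z',z'')\in\odd} \varphi_1\, \wt t_{kj}(z';N,s)\wt t_{il}(z'';N,s) + \sum_{(z',z'')\in\even} \psi_1\, \wt t_{ij}(z';N,s)\wt t_{kl}(z'';N,s),$$
whose coefficients are polynomials in $s$ independent of $N$. Every product on the right has bi-indices drawn from the admissible set $\{ij,kl,il,kj\}$; when $z'$ or $z''$ is empty, the Kronecker-delta convention collapses the product to a single admissible linear term. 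For each truly quadratic product $\wt t_{ab}(z';N,s)\wt t_{cd}(z'';N,s)$ that is not yet in normal order, I would rewrite it as $:\!\wt t_{ab}(z';N,s)\wt t_{cd}(z'';N,s)\!: \,+\, [\wt t_{ab}(z';N,s),\wt t_{cd}(z'';N,s)]$. Because $(z',z'')\in\odd\cup\even$ forces $\ell(z')+\ell(z'') < n$, the inductive hypothesis applies to each such swap commutator. The crucial closure property is that for both $(ab,cd)=(kj,il)$ and $(ab,cd)=(ij,kl)$, the inductive decomposition of $[\wt t_{ab}(z';N,s),\wt t_{cd}(z'';N,s)]$ produces, after the natural relabeling of its quadruple of external indices, normal monomials whose bi-indices again lie in $\{ij,kl,il,kj\}$. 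Reassembling the pieces yields the desired stable normal-ordered presentation in terms of the shapes \eqref{eq7.S1}.

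For uniqueness, suppose $0$ is written as a stable linear combination of pairwise distinct normal monomials taken from \eqref{eq7.S1}. By Lemma \ref{lemma7.B}, any finite family of pairwise distinct normal monomials in the $\wt t$-generators is linearly independent once $N$ is large enough; since the coefficients in the stable presentation are independent of $N$, they must all vanish. The main technical obstacle in the whole argument is precisely the verification of the bi-index closure claim mentioned above — the point of the six shapes in \eqref{eq7.S1} is exactly that this family is stable under the commutator/normal-ordering step, and the Stability Lemma supplies the right $\varphi$ and $\psi$ coefficients to keep the induction closed.
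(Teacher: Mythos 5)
Your proposal is correct and follows essentially the same route as the paper: existence by starting from Lemma \ref{lemma7.A1}, normal-ordering the quadratic terms at the cost of compensating commutators of strictly smaller total degree (the paper phrases this as a terminating recurrent procedure, you as induction on $\ell(w)+\ell(\wt w)$ with the bi-index closure check made explicit), and uniqueness from the asymptotic linear independence of normal monomials in Lemma \ref{lemma7.B} combined with $N$-independence of the coefficients.
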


\begin{proof} The uniqueness claim follows from Lemma \ref{lemma7.B}. To prove the existence we apply the following recurrent procedure. 

First we apply any of the commutation relations of Lemma \ref{lemma7.A1} and collect possible similar terms (the latter may arise if at least one of the equalities $i=k$, $j=l$ holds). If all resulting quadratic monomials are already in the normal form, the procedure stops. Otherwise, we permute the factors in those quadratic monomials that are not in normal form and add the compensating commutators, after which we again collect possible similar terms. And so on. At each step, the total degree decreases, so that finally the procedure stops and we obtain the desired representation.
\end{proof}

\begin{lemma}\label{lemma7.D}
In the context of Lemma \ref{lemma7.C}, the coefficients of the stable linear combination in question do not depend on the parameter $s$.
\end{lemma}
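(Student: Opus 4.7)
The plan is to induct on the total length $n = \ell(w)+\ell(\wt w)$; the base case $n=2$ is immediate because single-letter $\wt t_{ij}(\al;N,s)=E_{ij\mid\al}$ is $s$-independent. In the inductive step, let $C_s := [\wt t_{ij}(w;N,s),\wt t_{kl}(\wt w;N,s)]$ and let $\wt\mu_\eta\in\Z$ denote the coefficients of the normal-ordered expansion of $C_0=[e_{ij}(w;N),e_{kl}(\wt w;N)]$ supplied by Lemma \ref{lemma7.C} applied at $s=0$. Set $\wt C_s := \sum_\eta \wt\mu_\eta M_\eta^s$. If I can prove $C_s=\wt C_s$ as elements of $U(\gl(N,\C)^\oL)$ for every $s$, then the uniqueness part of Lemma \ref{lemma7.C} forces $\mu_\eta(s)=\wt\mu_\eta$, which is the claim.

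The engine of the argument is the derivation $\delta := \partial_s$ on $U(\gl(N,\C)^\oL)[s]$. Differentiating the transition formula \eqref{eq6.H3} in $s$ with $s'$ held fixed and then evaluating at $s'=s$ (using $c(w,w';s,s)=\de_{w,w'}$) gives
\[
\delta\,\wt t_{ab}(z;N,s) = -\sum_{i:\, m_i\ge 2} (m_i-1)\, \wt t_{ab}(z^{(i)};N,s),
\]
where $z=\al_1^{m_1}\cdots\al_k^{m_k}$ and $z^{(i)}$ shortens the $i$-th run by one letter. The crucial feature is that these coefficients are integers \emph{independent of} $s$, so $\delta$ acts on the $\wt t(\cdot;N,s)$-basis by an $s$-independent length-decreasing operator. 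Applying $\delta$ to $C_s$ via Leibniz for the Lie bracket produces an integer combination of commutators of total length $\le n-1$; by the inductive hypothesis each such commutator has an $s$-independent stable normal-ordered expansion, so the same holds for $\delta C_s$. Likewise, $\delta\wt C_s = \sum_\eta \wt\mu_\eta\, \delta M_\eta^s$; since $\ell(M_\eta^s)\le n-1$, each Leibniz term in $\delta M_\eta^s$ is a $\wt t(\cdot;N,s)$-product of strictly smaller total length, and any such product that violates the normal-order condition can be reordered at the cost of a commutator of total length at most $n-2$, which has an $s$-independent expansion by induction. Hence $\delta\wt C_s$ too admits a stable $s$-independent normal-ordered expansion.

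Both $\delta C_s$ and $\delta\wt C_s$ are therefore of the form $\sum_\eta \lambda_\eta M_\eta^s$ with $s$-independent coefficients. Evaluating at $s=0$, Leibniz for the Lie bracket gives $\delta C_s|_{s=0} = [\delta e_{ij}(w),e_{kl}(\wt w)] + [e_{ij}(w),\delta e_{kl}(\wt w)] = \delta C_0$, and Leibniz for the associative product together with $\wt C_0 = C_0$ gives $\delta\wt C_s|_{s=0} = \sum_\eta \wt\mu_\eta\,\delta M_\eta^0 = \delta\wt C_0 = \delta C_0$. Thus the two $s$-independent expansions specialize to the same element of $U(\gl(N,\C)^\oL)$ at $s=0$, and the asymptotic linear independence of normal monomials in the $e$'s (Lemma \ref{lemma7.B}) forces all their coefficients to coincide for $N$ large enough. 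Consequently $\delta C_s = \delta\wt C_s$ as polynomials in $s$; combined with the initial condition $C_0 = \wt C_0$, this yields $C_s = \wt C_s$ for every $s$, completing the induction.

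The main technical obstacle is the bookkeeping in the second paragraph: one must verify that the $\wt t(\cdot;N,s)$-products arising from $\delta M_\eta^s$ can be normal-ordered without introducing $s$-dependent corrections. This follows from the bound $\ell(M_\eta^s)\le n-1$ guaranteed by Lemma \ref{lemma7.C}, which ensures that every swap commutator encountered falls strictly within the scope of the inductive hypothesis and therefore contributes only $s$-independent coefficients.
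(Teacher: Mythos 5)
There is a genuine gap at the step where you evaluate at $s=0$. Write $C_s=\sum_\eta \mu_\eta(s)\,M_\eta^s$ for the unique expansion provided by Lemma \ref{lemma7.C} and $\wt C_s=\sum_\eta \mu_\eta(0)\,M_\eta^s$ for your comparison element. Then
$\partial_s C_s\big|_{s=0}-\partial_s \wt C_s\big|_{s=0}=\sum_\eta \mu_\eta'(0)\,M_\eta^0$,
so the asserted equality $\partial_s\wt C_s|_{s=0}=\partial_s C_s|_{s=0}$ is literally the statement that all $\mu_\eta'(0)$ vanish --- i.e.\ the first-order case of the very lemma you are proving. It does not follow from $\wt C_0=C_0$: two polynomial families of elements that agree at one value of $s$ need not have equal derivatives there, and ``Leibniz for the associative product'' only computes $\partial_s\wt C_s|_{s=0}$; it does not compare it with $\partial_s C_s|_{s=0}$. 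The surrounding machinery is sound (the formula $\partial_s\wt t_{ab}(z;N,s)=-\sum_i(m_i-1)\wt t_{ab}(z^{(i)};N,s)$ with $s$-independent integer coefficients, the strict degree drop, the reordering commutators being covered by the inductive hypothesis), but what it actually yields, after matching coefficients via Lemma \ref{lemma7.B}, is a constant-coefficient linear system $\mu'_{\eta'}(s)=\lambda_{\eta'}-\sum_\eta a_{\eta\eta'}\,\mu_\eta(s)$ with $\lambda_{\eta'},a_{\eta\eta'}$ independent of $s$. Such a system has plenty of nonconstant polynomial solutions (already at degree $n-2$ one gets $\mu_{\eta'}(s)=\mu_{\eta'}(0)+\mathrm{const}\cdot s$), so constancy cannot be extracted from it without exactly the missing information about the derivative at a base point; the induction therefore does not close.

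This is not a repairable bookkeeping issue: some input beyond the formal $s$-dependence is required, since for a general triangular change of generators the structure constants do change with the parameter. The paper supplies that input by a completely different mechanism, namely a comparison of two stabilities in $N$: Lemma \ref{lemma7.C} gives $[\wt t_{ij}(w;N,s),\wt t_{kl}(\wt w;N,s)]=\sum_r c_r(s)\wt\tau_r(N,s)$ with $c_r(s)$ independent of $N$ (and polynomial in $s$ by Lemma \ref{lemma7.A1}), while rewriting the same relation in terms of $t_{ab}(\ccdot;N,s)=\wt t_{ab}(\ccdot;N,N+s)$ and passing to the projective limit $A_{d,L}$ (the consistency of Theorem \ref{thm6.A1}, the triviality of $\ker\pi_{\infty,N}$ on $A^{(n)}_{d,L}$ for $N\gg n$ from section \ref{sect4.4}, and linear independence of the limit monomials) shows that $c_r(N+s)$ is also independent of $N$; a polynomial with both properties is constant. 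Your proposal never invokes the $N$-shift or the projective limit, and that is precisely the ingredient whose absence leaves the argument circular at the $s=0$ evaluation.
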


\begin{proof}
Let us write the commutation relations given by  Lemma \ref{lemma7.C} in the following form:
\begin{equation}\label{eq7.T1}
[\wt t_{ij}(w;N,s),\wt t_{kl}(\wt w;N,s)]=c_1(s) \wt\tau_1(N,s)+\dots+c_m(s)\wt\tau(N,s).
\end{equation}
Here $\wt\tau_1(N;s), \dots,\wt\tau_m(N;s)$ are pairwise distinct normal monomials of the form \eqref{eq7.S1} and $c_1(s),\dots,c_m(s)$ are the corresponding coefficients. These data of course depend on the quadruple $i,j,k,l$ and the pair $w,\wt w$, but we suppress them to simplify the notation. 

In terms of the elements $t_{ij}(w;N,s)=\wt t_{ij}(w;N,N+s)$ introduced in \eqref{eq6.T}, the above relations take the form
\begin{equation}\label{eq7.T2}
[t_{ij}(w;N,s),t_{kl}(\wt w;N,s)]=c_1(N+s) \tau_1(N,s)+\dots+c_m(N+s)\tau(N,s),
\end{equation}
where 
$\tau_1(N,s),\dots,\tau_m(N,s)$ are the normal monomials similar to $\wt\tau_1(N,s),\dots,\wt\tau_m(N,s)$, but of the form 
$$
\begin{gathered}
t_{ij}(\ccdot;N,s), \quad t_{kl}(\ccdot;N,s), \quad  t_{il}(\ccdot;N,s), \quad  t_{kj}(\ccdot;N,s), \\
:\!t_{ij}(\ccdot;N,s)t_{kl}(\ccdot;N,s)\!:, \quad :\!t_{il}(\ccdot;N,s) t_{kj}(\ccdot;N,s)\!:,
\end{gathered}
$$
instead of \eqref{eq7.S1}. 

From Lemma \ref{lemma7.C} we know that the expansion \eqref{eq7.T1} is stable, meaning that the coefficients $c_1(s),\dots,c_m(s)$ do not depend on $N$. Next, by virtue of Lemma \ref{lemma7.D}, their dependence on $s$ is polynomial.

On the other hand, we will show that the expansion \eqref{eq7.T2} is also stable, meaning that the coefficients 
$c_1(N+s), \dots,c_m(N+s)$ do not depend on $N$. These two facts together will imply that the coefficients are in fact constant. 
We proceed now to the proof of the stability property for the expansion \eqref{eq7.T2}. 

Fix an arbitrary $d\ge\max(i,j,k,l)$ and recall the notation  $\pi_{\infty,N}$ for the canonical projection $A_\dd\to A_\dd(N)$ (section \ref{sect4.4}, item 3). From Theorem \ref{thm6.A1} we know that in the algebra $A_\dd$, there exist (unique) elements 
$$
t_{ij}(w;s), \quad t_{kl}(\wt w;s), \quad \tau_1(s),\quad \dots,\quad \tau_m(s), 
$$
with the property that
\begin{gather*}
\pi_{\infty,N}(t_{ij}(w;s))=t_{ij}(w;N,s), \quad \pi_{\infty,N}(t_{kl}(\wt w;s))=t_{kl}(\wt w;N,s),\\
\pi_{\infty,N}(\tau_1(s))=\tau_1(N,s),\quad \dots,\quad \pi_{\infty,N}(\tau_m(s))=\tau_m(N,s). 
\end{gather*}

Thus, the commutation relation \eqref{eq7.T2} expresses a linear relation of the form
\begin{equation}\label{eq7.T3}
\pi_{\infty,N}(t_{ij}(w;s),t_{kl}(\wt w;s)])=(\cdots)\pi_{\infty,N}(\tau_1(s))+\dots+(\cdots)\pi_{\infty,N}(\tau(s)),
\end{equation}
where the dots replace the coefficients.

The degrees of the elements in this relation are uniformly bounded by $n:=\ell(w)+\ell(\wt w)$. Recall that 
for $N\gg n$, the kernel of  $\pi_{\infty,N}$ has the trivial intersection with $A^{(n)}_\dd$ (see section \ref{sect4.4}, item 4). Furthermore, the elements $\tau_1(s),\dots,\tau_m(s)$ are linearly independent in $A_\dd$. Therefore, their images in $A_\dd(N)$ are also linearly independent, for large $N$. It follows that the expansion \eqref{eq7.T3} is stable in the sense that the coefficients do not depend on $N$. 

This proves the desired stability property of \eqref{eq7.T2} and completes the proof.
\end{proof}

In the next theorem we summarize the results of this section. Repeat once again the notation: $(i,j,k,l)$ is a quadruple of indices;  $w, \wt w\in\W_L$ are two words; $s\in\C$ is a parameter. 

\begin{theorem}\label{thm7.A} 
Given a quadruple of indices $(i,j,k,l)$ and a pair of words $w,\wt w\in\W_L$, there exists a finite collection of pairwise distinct normal monomials $\tau_1(s), \dots, \tau_m(s)$, which have degree at most $\ell(w)+\ell(\wt w)-1$ and are of the form 
$$
\begin{gathered}
t_{ij}(\ccdot;s), \quad t_{kl}(\ccdot;s), \quad  t_{il}(\ccdot;s), \quad  t_{kj}(\ccdot;s), \\
:\!t_{ij}(\ccdot;s)t_{kl}(\ccdot;s)\!:, \quad :\!t_{il}(\ccdot;s) t_{kj}(\ccdot;s)\!:,
\end{gathered}
$$
with the property that for any $s\in\C$ the following commutation relation holds
\begin{equation}\label{eq7.T4}
[t_{ij}(w;s),t_{kl}(\wt w;s)]=c_1 \tau_1(s)+\dots+c_m\tau_m(s);
\end{equation}
here $c_1,\dots,c_m$ are some integral coefficients which do not depend on $s$.
\end{theorem}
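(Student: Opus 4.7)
The statement is essentially the consolidation of Lemmas \ref{lemma7.C} and \ref{lemma7.D}, lifted from the finite level $A_\dd(N)$ to the projective limit algebra. Fix $d\ge\max(i,j,k,l)$; by Corollary \ref{cor6.C1} the elements $t_{ij}(w;s)$ and $t_{kl}(\wt w;s)$ lie in $A_\dd$, and it suffices to verify \eqref{eq7.T4} inside $A_\dd$, since $A_\dd \hookrightarrow A_{\infty,L}$ is an algebra embedding.

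At finite level, I apply Lemma \ref{lemma7.C} to the commutator $[\wt t_{ij}(w;N,s),\wt t_{kl}(\wt w;N,s)]$ in $U(\gl(N,\C)^{\oplus L})$. This produces a unique $N$-stable expansion
$$[\wt t_{ij}(w;N,s),\wt t_{kl}(\wt w;N,s)] = \sum_r c_r(s)\,\wt\tau_r(N,s),$$
where the $\wt\tau_r$ are pairwise distinct normal monomials of the six forms listed in \eqref{eq7.S1} and the coefficients $c_r(s)$ are polynomials in $s$ (polynomiality is inherited from the transition formulas \eqref{eq6.L1}--\eqref{eq6.L2} fed into the recursive normalization in the proof of Lemma \ref{lemma7.C}). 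After the affine shift $s\mapsto N+s$ built into \eqref{eq6.T}, the same relation reads
$$[t_{ij}(w;N,s),t_{kl}(\wt w;N,s)] = \sum_r c_r(N+s)\,\tau_r(N,s).$$
Lemma \ref{lemma7.D} now asserts that each polynomial $c_r(s)$ is in fact constant, so the coefficients $c_r$ in this expansion are simultaneously independent of $N$ and of $s$.

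To transfer the identity to $A_\dd$, I note that by Corollary \ref{cor6.C1} (applied to each factor) the elements $t_{ij}(w;s)$, $t_{kl}(\wt w;s)$ and each $\tau_r(s)$ exist in $A_\dd$ and project under $\pi_{\infty,N}$ onto $t_{ij}(w;N,s)$, $t_{kl}(\wt w;N,s)$ and $\tau_r(N,s)$ respectively. For $N\gg \ell(w)+\ell(\wt w)$ the kernel of $\pi_{\infty,N}$ intersects trivially the filtration level in question (section \ref{sect4.4}, item 4), so the stable finite-level identity lifts verbatim to yield \eqref{eq7.T4} in $A_\dd$.

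Integrality of $c_r$ follows by tracking coefficients: Lemma \ref{lemma7.A} provides $\Z$-valued base coefficients, and the transition coefficients $c(w,w';s,s')$ from \eqref{eq6.H3} all belong to $\Z[s,s']$, so the recursive normalization producing the $c_r(s)$ stays entirely in $\Z[s]$; a constant polynomial in $\Z[s]$ is an integer. The main delicate point I anticipate is not the combinatorics but the interplay between Lemmas \ref{lemma7.C} and \ref{lemma7.D}: one must verify that the very same recursion at the $\wt t$-level, once translated by the affine shift $s\mapsto N+s$, yields a stable expansion at the $t$-level, because it is this double stability (in $N$ and in $s$) that forces the coefficients to be numerical constants.
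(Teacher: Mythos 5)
Your proposal is correct and follows essentially the same route as the paper, which presents Theorem \ref{thm7.A} precisely as a summary of Lemmas \ref{lemma7.C} and \ref{lemma7.D} (the ``double stability'' in $N$ and $s$ you highlight is exactly the content of the paper's proof of Lemma \ref{lemma7.D}, via the projective-limit argument with $\pi_{\infty,N}$ and section \ref{sect4.4}, item 4). Your explicit tracking of integrality through $\Z[s]$ is a slightly more detailed justification of the integrality claim than the paper gives, but it is the same mechanism.
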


\subsection{Extracting $Y_\dd$ from $A_\dd$}
Throughout this section $d$ is an arbitrary fixed positive integer. The results given below are readily deduced from Theorem \ref{thm7.A}.

\begin{definition}
Pick an arbitrary value $s\in\C$.  We denote by $Y_\dd$ the subalgebra of $A_\dd$ generated by the elements $t_{ij}(w;s)$, where $1\le i,j\le d$, $w\in\W_L$. In view of \eqref{eq6.O} the definition does not depend on the choice of $s$. 
\end{definition}

Recall (Definition \ref{def5.A}) that $\YY_\dd$ is the unital subalgebra of the graded commutative algebra $P_\dd=\gr A_\dd$, generated by the elements $p_{ij}(w)$, where $1\le i,j\le d$ and $w\in\W_L$.

We know that $P_\dd$ splits into the tensor product of its subalgebras $P_{0,L}$ and $\YY_\dd$, each of which is also a Poisson subalgebra.  
We also know that the subalgebra $A_{0,L}\subset A_\dd$ is a lifting of $P_{0,L}$. The next theorem shows that $Y_\dd$ is a lifting of the Poisson subalgebra $\YY_\dd$.

\begin{theorem}\label{thm7.B}
{\rm(i)} The graded subalgebra $\gr Y_{d,L}\subset P_\dd$ associated to $Y_{d,L}$ coincides with $\YY_{d,L}$. 

{\rm(ii)} The following splitting holds 
\begin{equation}\label{eq7.I}
A_{d,L}\simeq A_{0,L}\otimes Y_{d,L} \quad \text{\rm (tensor product of vector spaces).}
\end{equation}
\end{theorem}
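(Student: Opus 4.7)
For part (i), I would observe that $Y_\dd$ is by construction the subalgebra of $A_\dd$ generated by the elements $\{t_{ij}(w;s): 1 \le i,j \le d,\; w \in \W_L\}$, each of which, by Corollary \ref{cor6.C1}(ii), is a lifting of the generator $p_{ij}(w) \in \YY_\dd$. Any element of $Y_\dd$ is a polynomial in the $t_{ij}(w;s)$, and its top-degree term in $\gr A_\dd \simeq P_\dd$ is the corresponding polynomial in the $p_{ij}(w)$, which lies in $\YY_\dd$. This gives $\gr Y_\dd \subseteq \YY_\dd$. Conversely, every generator $p_{ij}(w)$ of $\YY_\dd$ is the symbol of $t_{ij}(w;s) \in Y_\dd$, so $\YY_\dd \subseteq \gr Y_\dd$, and equality follows.

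For part (ii), the plan is to build compatible PBW bases of the two factors and of $A_\dd$ itself, then observe that the multiplication map sends a basis to a basis. Fix a total order on the set
\[
\G := \{p(\ww): \ww \in \CW_L\} \cup \{p_{ij}(w): w \in \W_L,\; 1 \le i,j \le d\}
\]
of free generators of $P_\dd$ (Theorem \ref{thm4.B}) in which every element of the first family precedes every element of the second. Choose representatives $\W'_L \subset \W_L$ for $\CW_L$ and lift according to Corollary \ref{cor6.C1}, putting $p(\ww) \rightsquigarrow t(w;s)$ for the chosen $w \in \W'_L$ and $p_{ij}(w) \rightsquigarrow t_{ij}(w;s)$. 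By Proposition \ref{prop3.A} together with Lemma \ref{lemma4.D} applied to $A_\dd$, the ordered monomials in these liftings form a filtered basis of $A_\dd$. With the chosen ordering, each such monomial factors uniquely as a product $u_\mu v_\nu$, where $u_\mu$ is an ordered monomial in the $t(w;s)$, $w \in \W'_L$, and $v_\nu$ an ordered monomial in the $t_{ij}(w;s)$.

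Next I would apply Lemma \ref{lemma4.D} twice more, inside $A_{0,L}$ and inside $Y_\dd$ separately. For $A_{0,L}$ the hypotheses are furnished by Theorem \ref{thm4.A} (the $p(\ww)$ are homogeneous, algebraically independent generators of $P_{0,L} = \gr A_{0,L}$) together with Corollary \ref{cor6.C1}(i), making $\{u_\mu\}$ a basis of $A_{0,L}$. For $Y_\dd$, part (i) identifies $\gr Y_\dd$ with the freely generated subalgebra $\YY_\dd \subset P_\dd$ whose homogeneous free generators $p_{ij}(w)$ are lifted by the $t_{ij}(w;s)$; hence $\{v_\nu\}$ is a basis of $Y_\dd$. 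Consequently the multiplication map $A_{0,L} \otimes Y_\dd \to A_\dd$ sends the tensor-product basis $\{u_\mu \otimes v_\nu\}$ onto the basis $\{u_\mu v_\nu\}$ of $A_\dd$ constructed above, and is therefore a vector space isomorphism.

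The main delicate point is the simultaneous bookkeeping of the three applications of Lemma \ref{lemma4.D}: the total order on $\G$ must be chosen so that ordered monomials in the liftings match products $u_\mu v_\nu$ in precisely that order, both in the graded algebra $P_\dd$ and in $A_\dd$. Once the orders are aligned, the argument is essentially formal; the genuine content has already been placed in Theorems \ref{thm4.A} and \ref{thm4.B}, the consistency relations of Theorem \ref{thm6.A1} underlying Corollary \ref{cor6.C1}, and part (i) of the present theorem. Notably, the commutation relations supplied by Theorem \ref{thm7.A} play no role in this statement — they will be invoked only for the presentation result.
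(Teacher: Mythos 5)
There is a genuine gap, and it is located exactly where you declare Theorem \ref{thm7.A} irrelevant. In part (i), the inclusion $\YY_\dd\subseteq\gr Y_\dd$ is indeed immediate from the lifting property, but the reverse inclusion $\gr Y_\dd\subseteq\YY_\dd$ does not follow from the one-line argument you give. If $x\in Y_\dd$ is written as a noncommutative polynomial in the generators $t_{ij}(w;s)$, its symbol equals ``the corresponding polynomial in the $p_{ij}(w)$'' only when the formally top-degree part of that polynomial does not vanish in $P_\dd$; when it cancels, the actual symbol of $x$ sits in lower degree and is governed by the lower-order tails of the liftings and by commutators, and there is no a priori reason for it to lie in $\YY_\dd$. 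This is not a cosmetic issue: in general, $\gr$ of a subalgebra generated by liftings need \emph{not} be contained in the subalgebra generated by the symbols. For instance, in the Weyl algebra with $[\partial,x]=1$, filtered by total degree, the subalgebra $B$ generated by $x$ and $\partial^2$ contains $\partial=\tfrac12[\partial^2,x]$, whose symbol $\xi$ is not in the subalgebra of $\C[x,\xi]$ generated by the symbols $x$ and $\xi^2$. The paper closes this gap precisely with Theorem \ref{thm7.A} (resting on the Stability Lemma): the commutator of two generators of $Y_\dd$ is again a $\C$-combination of normal monomials in generators of the same four bi-index types, so every element of $Y_\dd$ can be straightened into a combination of normal monomials; since distinct normal monomials have distinct, linearly independent symbols (monomials in the free generators of $\YY_\dd$), no top-degree cancellation can occur, and $\gr Y_\dd=\YY_\dd$ follows. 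Without this input one cannot even rule out that reordering products of the $t_{ij}(w;s)$ produces correction terms involving the $t(w;s)\in A_{0,L}$, which would destroy both (i) and your basis count for $Y_\dd$ in (ii). So your closing remark that the commutation relations play no role here contradicts the actual logical structure: they are the substance of part (i).

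Part (ii), as you present it, is fine \emph{given} (i): the triple application of Lemma \ref{lemma4.D} (to $A_\dd$ with the generators of $P_{0,L}$ ordered before those of $\YY_\dd$, to $A_{0,L}$ via Theorem \ref{thm4.A} and Corollary \ref{cor6.C1}(i), and to $Y_\dd$ via (i)) shows that the multiplication map carries the tensor-product basis $\{u_\mu\otimes v_\nu\}$ onto the PBW basis $\{u_\mu v_\nu\}$ of $A_\dd$, which is exactly the natural way to fill in the paper's terse ``(ii) follows from (i)''. Repair (i) by invoking Theorem \ref{thm7.A} (or Lemma \ref{lemma7.C}) and the rest of your argument stands.
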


In the particular case $L=1$, the splitting \eqref{eq7.I} is  an algebra isomorphism (see Theorem \ref{thm2.A} (iii)), but for $L\ge2$ this is no longer true: an obstacle for this is the fact that the Poisson bracket between the elements of $P_{0,L}$ and $\YY_\dd$ is nontrivial for $L\ge2$ (see \eqref{eq5.B}).

\begin{proof}

(i) Recall that the elements $t_{ij}(w;s)$ serve as liftings of the elements $p_{ij}(w)$ of the commutative algebra $P_\dd$. So it is evident that $\gr Y_\dd\supseteq \YY_\dd$. To see that $\gr Y_\dd$ is in fact no larger than $\YY_\dd$ we apply Theorem \ref{thm7.A}. It implies that for any fixed value of the parameter $s$, the normal monomials built from the generators $t_{ij}(w;s)$, where  $1\le i,j\le d$ and $w\in\W_L$, form a basis of $Y_\dd$. From this it is clear that $\gr Y_\dd=\YY_\dd$.

(ii) This follows from (i). 
\end{proof}

We are going to show that the definition \eqref{eq2.D} of the shift automorphisms of the Yangian $Y_d$ can be extended to the algebras $Y_{d,L}$. 

\begin{theorem}\label{thm7.C}
{\rm(i)} For any fixed value of the parameter $s$, the commutation relations \eqref{eq7.T4} for the generators $t_{ij}(w;s)$ are defining relations of the algebra $Y_\dd$. 

{\rm(ii)} The additive group\/ $\C$ acts on the algebra $Y_\dd$ by automorphisms $\Shift_c$, $c\in\C$, such that, on the generators $t_{ij}(w;s)$,  
\begin{equation}\label{eq7.U}
\Shift_c(t_{ij}(w;s))=t_{ij}(w;s+c), \qquad c\in\C.
\end{equation}

{\rm(iii)} The automorphisms $\Shift_c$ preserve the filtration $Y_\dd=\bigcup Y^{(n)}_\dd$ induced by the filtration $A_\dd=\bigcup A^{(n)}_\dd$ of the ambient algebra $A_\dd$. 
\end{theorem}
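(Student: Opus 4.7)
The plan is to prove the three parts in order, using Theorem \ref{thm7.A} (the existence of the stable commutation relations) and the transition formula \eqref{eq6.O} (which relates different values of $s$) as the two key technical ingredients. Fix $s\in\C$ throughout.

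For part (i), I would set up a standard PBW-style argument. Let $\widehat Y$ be the abstract associative algebra presented by generators $\widehat t_{ij}(w)$ (with $1\le i,j\le d$, $w\in\W_L$) and the relations obtained from \eqref{eq7.T4} by replacing every $t_{ab}(z;s)$ by $\widehat t_{ab}(z)$. There is an evident surjection $\pi:\widehat Y\to Y_\dd$, and the goal is injectivity. The relations express each commutator of generators as a $\Z$-linear combination of normal (and quadratic-normal) monomials of strictly smaller total degree. Hence, by induction on the total degree with an inner induction on lexicographic order, every element of $\widehat Y$ is a linear combination of normal monomials in the generators $\widehat t_{ij}(w)$. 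On the other hand, Theorem \ref{thm7.B}(i) identifies $\gr Y_\dd$ with $\YY_\dd$, whose generators $p_{ij}(w)$ are algebraically independent by Theorem \ref{thm4.B}; consequently the normal monomials in $\{t_{ij}(w;s)\}$ are linearly independent in $Y_\dd$. Combining spanning of normal monomials in $\widehat Y$ with linear independence of their images in $Y_\dd$ yields bijectivity.

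For part (ii), the key input is that by Lemma \ref{lemma7.D} the coefficients $c_1,\dots,c_m$ in \eqref{eq7.T4} are independent of the parameter. Therefore, for any $c\in\C$, the elements $\{t_{ij}(w;s+c)\}$ satisfy precisely the same system of commutation relations as $\{t_{ij}(w;s)\}$, merely with $s$ replaced by $s+c$. By \eqref{eq6.O}, each $t_{ij}(w;s+c)$ lies in $Y_\dd$ and is expressed through $\{t_{ij}(w';s)\}$ by a unipotent triangular transformation with respect to the order $\prec$; in particular, $\{t_{ij}(w;s+c)\}$ is again a system of generators of $Y_\dd$. Applying the universal property from part (i) to this second system, there is a unique algebra morphism $\Shift_c:Y_\dd\to Y_\dd$ with $\Shift_c(t_{ij}(w;s))=t_{ij}(w;s+c)$. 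The composition $\Shift_{-c}\circ\Shift_c$ fixes every generator and hence, by the same uniqueness, equals the identity; similarly in the other order. This proves that $\Shift_c\in\operatorname{Aut}(Y_\dd)$ and that $c\mapsto\Shift_c$ is a group homomorphism.

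For part (iii), filtration preservation is immediate from \eqref{eq6.O}: the relation $w'\prec w$ forces $\ell(w')<\ell(w)$, so $\Shift_c(t_{ij}(w;s))=t_{ij}(w;s+c)$ is a linear combination of elements $t_{ij}(w';s)$ with $\ell(w')\le\ell(w)$ and hence lies in $Y_\dd^{(\ell(w))}$. Because $\Shift_c$ is an algebra morphism and the generators $t_{ij}(w;s)$ have filtration degree $\ell(w)$, this implies $\Shift_c(Y_\dd^{(n)})\subseteq Y_\dd^{(n)}$ for every $n$. The main obstacle is the PBW assertion in part (i): one must verify both the termination of the normalization procedure in $\widehat Y$ (which follows from the strict degree drop in \eqref{eq7.T4}) and the linear independence of its output in $Y_\dd$ (which follows from Theorem \ref{thm7.B}(i) via $\gr Y_\dd=\YY_\dd$). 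Once part (i) is in place, parts (ii) and (iii) follow formally; the substantive earlier work needed for (ii) is exactly the $s$-independence of the coefficients established in Lemma \ref{lemma7.D}.
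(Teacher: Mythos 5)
Your parts (i) and (iii) are correct and follow essentially the paper's route; in (i) you usefully spell out the straightening/PBW argument (abstract presented algebra, spanning by normal monomials via the degree drop in \eqref{eq7.T4}, linear independence of their images from $\gr Y_\dd=\YY_\dd$ and Theorem \ref{thm4.B}) that the paper states only tersely.

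The gap is in part (ii), at the step ``the composition $\Shift_{-c}\circ\Shift_c$ fixes every generator.'' By construction $\Shift_{-c}$ is specified only on the generators $t_{ij}(w;s)$, where it yields $t_{ij}(w;s-c)$; its value on $t_{ij}(w;s+c)$ --- which is what $\Shift_{-c}\circ\Shift_c$ produces from $t_{ij}(w;s)$ --- is not determined by this specification and is not automatically $t_{ij}(w;s)$. To compute it one must expand $t_{ij}(w;s+c)=\sum_{w'\preceq w}c(w,w';s+c,s)\,t_{ij}(w';s)$ via \eqref{eq6.O}, apply $\Shift_{-c}$ termwise, and then invoke the fact that the transition coefficients $c(w,w';s,s')$ depend only on the difference $s'-s$ (visible from \eqref{eq6.H3}), so that $c(w,w';s+c,s)=c(w,w';s,s-c)$ and the sum collapses back to $t_{ij}(w;s)$ by \eqref{eq6.O} again. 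This shift-invariance of the coefficients is precisely the ingredient the paper's proof isolates: it first shows that $\Shift_c$ sends $t_{ij}(w;s')$ to $t_{ij}(w;s'+c)$ for \emph{every} $s'$, after which invertibility and, crucially, the group law $\Shift_c\circ\Shift_{c'}=\Shift_{c+c'}$ are immediate. In your write-up the concluding claim that $c\mapsto\Shift_c$ is a group homomorphism likewise does not follow from what precedes it, since $\Shift_c\circ\Shift_{c'}(t_{ij}(w;s))=\Shift_c(t_{ij}(w;s+c'))$ again requires knowing $\Shift_c$ on generators taken at a shifted parameter value. Once you insert the observation about invariance of $c(w,w';s,s')$ under simultaneous shifts of $(s,s')$, both $\Shift_{-c}\circ\Shift_c=\operatorname{id}$ and the action property follow, and the rest of your argument stands.
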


\begin{proof}
(i) As pointed out in the proof of Theorem \ref{thm7.B}, the normal monomials built from the generators form a basis in $Y_\dd$. The commutation relations make it possible to compute, in principle, the multiplication table in this basis. This proves (i).

(ii) Fix an arbitrary $s$. Then the shift \eqref{eq7.U} gives rise to an automorphism of $Y_\dd$: this follows from (i) and the fact that the defining commutation relations \eqref{eq7.T4} do not depend on $s$. It remains to show that this automorphism also takes $t_{ij}(w;s')$ to $t_{ij}(w;s'+c)$ for any $i,j$ and any other value $s'\in\C$. To do this, we write \eqref{eq6.O} in the form
$$
t_{ij}(w;s')=\sum_{\wt w\preceq w}c(w,\wt w;s',s) t_{ij}(\wt w;s)
$$
and use the fact that the transition coefficients $c(w,w';s', s)$ are invariant under the simultaneous shift $(s',s)\to(s'+c,s+c)$ (see \eqref{eq6.H3}). 

(iii) This follows from the fact that the elements $t_{ij}(w;s)$ with fixed $i,j,w$ and varying $s\in\C$ are liftings of one and the same element $p_{ij}(w)\in \YY_\dd$. 
\end{proof}

We call the automorphisms \eqref{eq7.U} the \emph{shift automorphisms} of $Y_\dd$. 

\begin{corollary}\label{cor7.A}
Fix an arbitrary $s\in\C$. For any $N\ge d$, there exists an algebra morphism
\begin{equation}\label{eq7.W}
\wt \pi^{(s)}_{\infty,N}:  Y_\dd \to A_\dd(N)\subset U(\gl(N,\C)^\oL),
\end{equation}
uniquely characterized by the property that 
\begin{equation}\label{eq7.V}
\wt \pi^{(s)}_{\infty,N}(t_{ij}(w;s))=e_{ij}(w;N), \qquad 1\le i,j\le d, \quad w\in\W_L.
\end{equation}
\end{corollary}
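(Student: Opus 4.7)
The plan is to realize $\wt\pi^{(s)}_{\infty,N}$ as an explicit composition of two morphisms that have already been constructed, rather than invoking a universal property. Concretely, I would define
$$
\wt\pi^{(s)}_{\infty,N} \;:=\; \pi_{\infty,N}\circ \Shift_{-N-s}\big|_{Y_\dd},
$$
where $\Shift_{-N-s}\in\mathrm{Aut}(Y_\dd)$ is the shift automorphism of Theorem \ref{thm7.C}(ii) and $\pi_{\infty,N}:A_\dd\to A_\dd(N)$ is the canonical projection coming from the projective-limit definition of $A_\dd$ (section \ref{sect4.4}, item 3). Both factors are algebra morphisms, hence so is the composition, and its image lies in $A_\dd(N)$ as required.

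The second step is to verify the normalization \eqref{eq7.V} on generators. For any $1\le i,j\le d$ and $w\in\W_L$, formula \eqref{eq7.U} gives
$$
\Shift_{-N-s}(t_{ij}(w;s)) \;=\; t_{ij}(w;\,s+(-N-s)) \;=\; t_{ij}(w;-N).
$$
Since $t_{ij}(w;s')\in A_\dd$ is by construction the coherent sequence whose $N$th component is $t_{ij}(w;N;s')$ (Corollary \ref{cor6.C1}(ii) together with the defining property of the projective limit), applying $\pi_{\infty,N}$ to $t_{ij}(w;-N)$ returns $t_{ij}(w;N;-N)$. Finally, by the substitution convention \eqref{eq6.T} and by \eqref{eq6.M},
$$
t_{ij}(w;N;-N) \;=\; \wt t_{ij}(w;N;\,N+(-N)) \;=\; \wt t_{ij}(w;N;0) \;=\; e_{ij}(w;N),
$$
which is precisely \eqref{eq7.V}. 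Uniqueness is then immediate: the elements $t_{ij}(w;s)$ with $1\le i,j\le d$ and $w\in\W_L$ generate $Y_\dd$ by definition, so any algebra morphism out of $Y_\dd$ is determined by its values on them.

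No substantial obstacle remains; Corollary \ref{cor7.A} is essentially a repackaging of the already established shift automorphisms from Theorem \ref{thm7.C}(ii) together with the projective-limit projections $\pi_{\infty,N}$. A more direct alternative would bypass the shifts and invoke Theorem \ref{thm7.C}(i) instead: since \eqref{eq7.T4} are defining relations of $Y_\dd$, and since—by Lemma \ref{lemma7.A1} at $s=0$ combined with the reasoning of Lemma \ref{lemma7.C}—the images $e_{ij}(w;N)\in A_\dd(N)$ satisfy commutation relations of the exact same shape with identical numerical coefficients, the universal property supplies the desired morphism directly. Either route works; I prefer the first because it hides the combinatorial bookkeeping of the commutation relations inside the shift automorphism, which has already done the work.
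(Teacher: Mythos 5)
Your proposal is correct and follows essentially the same route as the paper: the paper also constructs $\wt\pi^{(s)}_{\infty,N}$ by twisting the canonical projection $\pi_{\infty,N}$ with the shift automorphism, choosing the shift $c=-N-s$ so that $t_{ij}(w;s)\mapsto t_{ij}(w;-N)\mapsto \wt t_{ij}(w;N;0)=e_{ij}(w;N)$, and settles uniqueness by noting that the $t_{ij}(w;s)$ generate $Y_\dd$. Your computation and composition order match the paper's argument exactly, with the alternative via Theorem \ref{thm7.C}(i) mentioned only as an aside.
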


\begin{proof}
The uniqueness claim is obvious, because the elements $t_{ij}(w;s)$ in \eqref{eq7.V} are generators of $Y_\dd$. To construct $\wt \pi^{(s)}_{\infty,N}$ we twist the projection $\pi_{\infty,N}$ with an appropriate shift automorphism of $Y_\dd$. 

In more detail, we know that 
$$
\pi_{\infty,N}: t_{ij}(w;s) \to t_{ij}(w;N,s)=\wt t_{ij}(w;N,N+s). 
$$
Replace here $s$ with $s+c$ and compose with the shift automorphism \eqref{eq7.U}. Then we get a composition morphism $Y_\dd\to Y_\dd\to A_\dd(N)$, such that
$$
t_{ij}(w;s) \to t_{ij}(w;s+c) \to \wt t_{ij}(w;N,N+s+c). 
$$
Recall now that $e_{ij}(w;N)=\wt t_{ij}(w;N,0)$. Therefore, by setting $c=-N-s$ we get a morphism with the desired property \eqref{eq7.V}.
\end{proof}

\subsection{Alternative look at the algebras $Y_\dd$}\label{sect7.4}

\begin{corollary}\label{cor7.B}
{\rm(i)} The stable commutation relations for the elements $e_{ij}(w;N)$ given by Lemma \ref{lemma7.A} serve as commutation relations for the elements $t_{ij}(w;s)$ as well.  

{\rm(ii)} The same holds if the stable commutation relations are written with the use of normal quadratic monomials, as in Lemma \ref{lemma7.C}.
\end{corollary}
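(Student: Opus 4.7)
The plan is to transfer the commutation relations from the $e_{ij}(w;N)$ to the $t_{ij}(w;s)$ using the morphism $\wt\pi^{(s)}_{\infty,N}\colon Y_\dd \to A_\dd(N)$ supplied by Corollary \ref{cor7.A}, which sends each generator $t_{ij}(w;s)$ to $e_{ij}(w;N)$. For part (i), I would fix an ordered pair $(w,\wt w)$ and indices $i,j,k,l\in\{1,\dots,d\}$, pick one of the four stable commutation relations of Lemma \ref{lemma7.A} (say \eqref{eq7.B1}), and form the candidate relation in $Y_\dd$ by replacing every $e_{ab}(z;N)$ in \eqref{eq7.B1} with $t_{ab}(z;s)$, keeping the same integer coefficients $\varphi_1,\psi_1$. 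Let $\Delta\in Y_\dd$ denote the difference of the two sides of this candidate relation; by construction $\Delta$ lies in $Y_\dd\cap A_\dd^{(n)}$ with $n:=\ell(w)+\ell(\wt w)$.

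The key step is to show $\Delta=0$. Applying the algebra homomorphism $\wt\pi^{(s)}_{\infty,N}$ and using that the coefficients $\varphi_1,\psi_1$ are $N$- and $s$-independent integers, the image of $\Delta$ is exactly the difference of the two sides of \eqref{eq7.B1} inside $A_\dd(N)$, which vanishes by Lemma \ref{lemma7.A}. Hence $\Delta\in\ker\wt\pi^{(s)}_{\infty,N}$ for every $N\ge d$.

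Now I would invoke the filtration-triviality of the kernel. The morphism $\wt\pi^{(s)}_{\infty,N}$ is obtained from $\pi_{\infty,N}$ by precomposition with a shift automorphism $\Shift_c$ of $Y_\dd$, which preserves the filtration by Theorem \ref{thm7.C}(iii). Consequently any element of $Y_\dd^{(n)}\cap\ker\wt\pi^{(s)}_{\infty,N}$ is mapped by $\Shift_c$ into $A_\dd^{(n)}\cap\ker\pi_{\infty,N}$, which is trivial for $N\gg n$ by item 4 of section \ref{sect4.4}. Since $\Shift_c$ is an automorphism, $\Delta=0$. Running the same argument with \eqref{eq7.B2}, \eqref{eq7.B3}, \eqref{eq7.B4} yields the remaining three forms in part (i). Part (ii), concerning the normal-monomial expansions of Lemma \ref{lemma7.C}, is handled identically: the coefficients there are integers independent of $N$ and $s$ (by Lemma \ref{lemma7.D}), so the substitution $\wt t_{ab}(z;N,s)\rightsquigarrow t_{ab}(z;s)$ produces a valid identity in $Y_\dd$ by the same transfer principle.

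I do not foresee a genuine obstacle: all structural work has been done in Lemmas \ref{lemma7.A}, \ref{lemma7.C}, \ref{lemma7.D} and Corollary \ref{cor7.A}. The only point that requires care is to confirm that the ``twisted projection'' $\wt\pi^{(s)}_{\infty,N}$ inherits the kernel-on-filtration triviality from $\pi_{\infty,N}$; this is immediate once one observes that the shift automorphism used in its construction preserves the filtration of $Y_\dd$.
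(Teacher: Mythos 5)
Your proposal is correct and follows essentially the same route as the paper: transfer the relations through the twisted projection $\wt\pi^{(s)}_{\infty,N}$ of Corollary \ref{cor7.A} and kill the difference $\Delta$ using that $\ker\wt\pi^{(s)}_{\infty,N}\cap Y^{(n)}_\dd$ is trivial for $N\gg n$, which is exactly what the paper deduces from item 4 of section \ref{sect4.4} together with the filtration-preservation of the shift automorphisms (Theorem \ref{thm7.C}(iii)). Your write-up just spells out the details the paper leaves implicit.
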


\begin{proof}
This follows from Corollary \ref{cor7.A} and the fact that the intersection of the kernel of $\wt\pi_{\infty,N}$ with $Y^{(n)}_\dd$ is trivial for $N\gg n$. The latter fact follows in turn from item 4 of subsection \ref{sect4.4} and item (iii) of Theorem \ref{thm7.C}.
\end{proof}

\begin{remark}\label{rem7.C}
Relying on this result, one can define the algebra $Y_\dd$ without recourse to the centralizer construction, in the following way. 

Denote by $\Free_\dd$ the free algebra whose generators are symbols $t_{ij}(w)$ indexed by the triples $(i,j,w)$, where $i,j\in\{1,\dots,d\}$ and $w\in W_L$. Endow $\Free_\dd$ with a filtration by setting $\deg t_{ij}(w):=\ell(w)$. For $n\in\Z_{\ge0}$, denote by $\Free_\dd^{(n)}$ the $n$th term of this filtration. Next, for $N\ge d$, consider the homomorphism 
$$
\phi_N: \Free_\dd\to U(\gl(N,\C)^\oL), \quad t_{ij}(w)\mapsto e_{ij}(w;N),
$$
and denote by $\ker\phi_N$ its kernel. The point is that for any fixed $n$, the intersection $\ker\phi_N\cap \Free_\dd^{(n)}$ stabilizes as $N$ gets large. Denote this stable subspace by $K^{(n)}$. Obviously, $K^{(n)}\subseteq K^{(n+1)}$ and the union $K^{(\infty)}$ of all these subspaces is a two-sided ideal of $\Free_\dd$. The quotient $\Free_\dd/K^{(\infty)}$ is our algebra $Y_\dd$. 
\end{remark}

\section{Open problems and concluding remarks}\label{sect8}

\subsection{Defining relations of $Y_\dd$}

An open problem is to find a more explicit presentation of defining relations for the algebra $Y_\dd$ with $L\ge2$. Say, in the form of a combinatorial rule --- if there is no reasonable closed formula. 

The  problem seems to be nontrivial already for $d=1$. Recall that the algebra $Y_{1,L}$ is generated by the elements $t_{11}(w;s)$, where $w$ ranges over $\W_L$ and $s\in\C$ can be chosen arbitrarily. In the Yangian case $L=1$, these elements pairwise commute, so the algebra $Y_{1,1}$ is commutative (and isomorphic to the algebra of polynomials in countably many variables). But for $L\ge2$, the algebra $Y_{1,L}$ is noncommutative.

\subsection{Analog of Harish-Chandra homomorphism}

The construction of the Harish-Chandra homomorphism can be adapted to the algebra $Y_\dd$ with $d\ge2$, as follows.  Let $Y^0_\dd\subset Y_\dd$ denote the centralizer of the elements 
$$
\diag E_ {ii}=\sum_{\al=1}^L E_{ii\mid\al}, \quad i=1,\dots,d.
$$
Next, let $J^+_\dd$ be the left ideal of $Y_\dd$ generated by the elements of the form $t_{ij}(w;s)$ with $i<j$; likewise, let $J^-_\dd$ be the right ideal of $Y_\dd$ generated by the elements of the form $t_{ij}(w;s)$ with $i>j$ (the definition does not depend on the value of $s$). Then one has
$$
Y^0_\dd\cap J^+_\dd=Y^0_\dd\cap J^-_\dd,
$$
and this is a two-sided ideal of the algebra $Y^0_\dd$. Define $\overline{Y^0}_\dd$ as the quotient of $Y^0_\dd$ by this ideal. The map 
$$
\phi: Y^0_\dd \to \overline{Y^0}_\dd
$$
is an analog of the Harish-Chandra homomorphism. 

Recall that in the classical context of reductive Lie algebras, the image of the Harish-Chandra homomorphism is the symmetric algebra over a Cartan subalgebra. Thus, in our situation, $\overline{Y^0}_\dd$ plays the role of that algebra.

Note that $Y^0_\dd$ contains the elements of the form $t_{11}(w;s), \dots, t_{dd}(w;s)$, and their pushforwards under $\phi$ generate the whole algebra $\overline{Y^0}_\dd$. 

Further, note that the algebra $Y_\dd$ has plenty of finite-dimensional representations. For any (nonzero) finite-dimensional $Y_\dd$-module $V$, the subspace $V^+\subset V$, annihilated by the left ideal $J^+_\dd$, is nonzero, and the algebra $\overline{Y^0}_\dd$ acts on $V^+$ in a natural way. If $V$ is irreducible, then $V^+$ is irreducible as a $\overline{Y^0}_\dd$-module, too; moreover, it determines $V$ uniquely. 

In the Yangian case $L=1$, the corresponding algebra $\overline{Y^0}_{d,1}$ is commutative, which implies that for irreducible $V$, the subspace $V^+$ is one-dimensional. This is the starting point for Drindeld's parametrization of irreducible finite-dimensional representations of the Yangian $Y_d$ by certain characters of the commutative algebra $\overline{Y^0}_{d,1}$ (see e.g. Molev \cite[sect. 3.4]{M}). 

For $L\ge2$, the algebra $\overline{Y^0}_\dd$ is no longer commutative. It would be interesting to understand its structure, with an eye on possible applications to the study of representations of $Y_\dd$. 
 
\subsection{Infinite-dimensional differential operators}

Let $\Mat(N)$ denote the space of $N\times N$ complex matrices and $\Mat$ denote the space of arbitrary complex matrices of size $\infty\times\infty$. Equivalently, $\Mat=\varprojlim \Mat(N)$.  Let also $GL(\infty,\C)$ denote the inductive limit group $\varinjlim GL(N,\C)$. It acts on $\Mat$ by left and right multiplications. 

Let us identify $U(\gl(N,\C))$ with the algebra of (complex-analytic) left-invariant differential operators on the group $GL(N,\C)$. Any such operator has polynomial coefficients and hence can be extended to the space $\Mat(N)\supset GL(N,\C)$. Using this obvious fact one can realize the algebra $A_{\infty,1}=\varinjlim A_{d,1}$ as an algebra of (infinite-dimensional) differential operators on the space $\Mat$, invariant under the left action of $GL(\infty,\C)$; see \cite[sect. 2.2.18]{Ols-Limits}. This construction can be extended to the algebra $A_{\infty,L}=\varinjlim A_{d,L}$, for any $L\in\Z_{\ge2}$; then the space $\Mat$ is replaced by the $L$-fold direct product $\Mat^L=\Mat\times\dots\times\Mat$. 

\subsection{The algebra $A_{0,\LL}$}\label{sect8.3}

The above construction  provides a realization of $A_{0,L}$ as an algebra of differential operators on the space $\Mat^L$, invariant under the two-sided action of $\diag GL(\infty,\C)$, the image of the group $GL(\infty,\C)$ under its diagonal embedding into the $L$-fold direct product $GL(\infty,\C)^L$.

Let us regard $A_{0,L}$ as a filtered quantization of the Poisson algebra $P_{0,L}$ and recall that $P_{0,L}=S(\Lie_L)$ (Corollary \ref{cor5.B}).  Recall also that $\Lie_L$ is isomorphic to the necklace Lie algebra $\Lie(Q_L)$ associated to a quiver $Q_L$.  

On the other hand, there is a general construction of quantization for the Poisson algebras $S(\Lie(Q))$ related to arbirary quivers $Q$ (Schedler \cite{Sch}, Ginzburg--Schedler \cite{GinS}), which also employs differential operators.

\emph{Question}: how does the algebra $A_{0,L}$ and its realization by differential operators on the space $\Mat^L$ relate to the general construction of \cite{Sch} and \cite{GinS}?

One of the reasons why the algebra $A_{0,L}$ is interesting is that is resolves the multiplicities in $L$-fold tensor products of irreducible polynomial representations of the unitary groups. From this point of view, it is interesting to compare the algebras $A_{0,L}$ with the Mickelsson algebras, which also allow to resolve multiplicities (see  Khoroshkin-Ogievetsky \cite{KO} and the references therein to earlier publications). 

\subsection{Constructions in complex rank}

It would be interesting to relate the results of the present paper to the constructions of Etingof \cite[section 7]{Et} (interpolation of the Yangians $Y_d$ to complex values of $d$) and Utiralova \cite{U} (a version of the centralizer construction in the context of Deligne's category $\underline{\operatorname{Rep}}(GL_t)$).

Institute for Information Transmission Problems, Moscow, Russia.

Skolkovo Institute of Science and Technology, Moscow, Russia.

Faculty of Mathematics, HSE University, Moscow, Russia.

e-mail: olsh2007@gmail.com

\end{document}